\documentclass[a4paper,11pt,UKenglish]{article}
\usepackage[top=3.2cm, bottom=3.2cm, left=3.45cm, right=3.45cm]{geometry}
\usepackage[T1]{fontenc}
\usepackage[utf8]{inputenc}
\usepackage{lmodern}
\usepackage{amsmath,amssymb,amsthm,mathtools}
\usepackage{tikz}
\usetikzlibrary{patterns.meta, matrix}
\tikzset{NE-lines/.style={
  pattern={Lines[angle=45,distance=4pt]}, pattern color=black!50
}}
\usetikzlibrary{matrix,arrows.meta, positioning}
\usepackage{booktabs}
\usepackage{microtype}
\usepackage[colorlinks=true,linkcolor=blue,citecolor=blue,urlcolor=blue]{hyperref}

\theoremstyle{plain}
\newtheorem{theorem}{Theorem}[section]
\newtheorem{proposition}[theorem]{Proposition}
\newtheorem{corollary}[theorem]{Corollary}
\newtheorem{lemma}[theorem]{Lemma}
\theoremstyle{remark}

\newtheorem{example}[theorem]{Example}
\newtheorem{definition}[theorem]{Definition}

\newcommand{\QQ}{\mathbb{Q}}
\newcommand{\Cay}{\mathrm{Cay}}
\newcommand{\Av}{\mathrm{Av}}
\newcommand{\C}{\mathcal{C}}
\newcommand{\rgf}{\textnormal{\scshape rgf}}
\DeclareMathOperator{\st}{\mathrm{st}}
\DeclareMathOperator{\img}{im}
\newcommand{\stirling}[2]{\genfrac\{\}{0pt}{}{#1}{#2}}

\title{Simple Cayley permutations}
\author{Giulio Cerbai\\
  \normalsize Independent researcher\\
  \normalsize \texttt{giuliocerbai14@gmail.com}
  \and
  Anders Claesson\\
  \normalsize Department of Mathematics\\
  \normalsize University of Iceland\\
  \normalsize \texttt{akc@hi.is}}
\date{16 September 2026}

\begin{document}

\maketitle

\begin{abstract}
  We propose a notion of simplicity for Cayley permutations that is
  compatible with inflation. We prove that Cayley permutations admit a
  substitution decomposition analogous to that of permutations and use
  it to enumerate simple Cayley permutations, primitive simple Cayley
  permutations, and simple restricted growth functions.  We also prove
  that every hereditary Cayley permutation class with finitely many
  simple members has a finite basis and an algebraic generating
  function.  \medskip

\noindent \textit{Keywords:} Cayley permutation, simple permutation,
 inflation, substitution decomposition, relational structure
\end{abstract}

\thispagestyle{empty}
\section{Introduction}\label{intro}

A \emph{Cayley permutation} is either empty or a word of positive
integers that contains every number between one and its maximum
value. In other words, a Cayley permutation is an endofunction $\omega$ on
$[n]=\{1,2,\ldots,n\}$ such that $\img(\omega)=[m]$ for some $m\leq n$.
We write $\Cay_n$ for the set of Cayley permutations of length $n$ and
$\Cay=\bigcup_{n\geq 0}\Cay_n$. In addition, we write
$\Cay_+=\bigcup_{n\geq1}\Cay_n$ for the set of nonempty Cayley permutations.
There is a simple one-to-one correspondence
between ballots (ordered set partitions) and Cayley permutations:
a ballot $\varpi$ corresponds to the Cayley
permutation $\omega$ whose $i$th letter equals the unique index $j$
such that $i$ belongs to the $j$th block of $\varpi$.
Cayley permutations may also be seen as representatives for
equivalence classes of words modulo order isomorphism.
For these and many other reasons, recent papers on Cayley permutations
have explored their combinatorial and enumerative
properties~\cite{BeanBO26,CCtop23,CCcaypol24,CCcaypol26,CCfundig26},
as well as pattern avoidance~\cite{CeSortCay,CeModasc,CCEGcayspec}.

The main objective of this paper is to define simple
Cayley permutations and develop the surrounding theory.
The notion of simplicity on (standard) permutations was introduced
by Albert and Atkinson~\cite{AlbAtk05}: a permutation is
\emph{simple} if it has no proper non-singleton interval, where
an interval is a nonempty set of consecutive positions whose
corresponding values also form a contiguous set of integers.
Every permutation of length $n$ has trivial intervals of
length 1 (single entries) and length $n$ (the whole permutation).
If these are the only intervals, then the
permutation is simple. Simple permutations play an important role
in the study of permutation classes. Let us recall some results
that will be relevant in this paper. See the survey by
Brignall~\cite{Bri10} for a broader introduction.

Albert and Atkinson~\cite{AlbAtk05} showed that every permutation
may be written as the inflation of a unique simple permutation.
This allowed Albert, Atkinson, and Klazar~\cite{AlbAtkKla03} to
enumerate simple permutations in terms of the compositional inverse
of $f(x)=\sum_{n\ge 1}n!x^n$.
A special case of a general theorem by Schmerl and
Trotter~\cite{SchTro93} on indecomposable relational structures
says that every simple permutation of length $n\ge 2$ contains
a simple permutation of length $n-1$ or $n-2$. In other words, if
a permutation is simple then one may obtain a new simple permutation
by deleting at most two points. Permutations where a one-point
deletion is not sufficient are called \emph{exceptional}.
Schmerl and Trotter have classified the critically indecomposable
relational structures up to isomorphism of their skeletons.
And by reading the classification in terms
of permutations, Albert and Atkinson~\cite{AlbAtk05} showed that the
only exceptional permutations are the simple parallel alternations,
i.e.\ those of the form
\[
246\cdots (2m)135\cdots (2m-1),\quad m\ge 2,
\]
and their symmetries.
Permutation classes containing finitely many simple
permutations have also been studied extensively. A classic result
in this direction by Albert and Atkinson~\cite{AlbAtk05} shows that
a class of permutations containing only finitely many simple members
has a finite basis and an algebraic generating function.

We propose a notion of simplicity for Cayley
permutations that uses Hertzsprung intervals, defined in
Section~\ref{h-intervals}. This definition equips
Cayley permutations with a substitution decomposition analogous to
that of permutations, in which repeated letters are necessarily
inflated by the one-letter Cayley permutation.
Inflation and the substitution decomposition are defined in
Section~\ref{inflation}.
In Section~\ref{enumeration}, we derive a bivariate generating
function identity for simple Cayley permutations and enumerate
them.
In Section~\ref{sec:modules}, we view Cayley permutations as
relational structures and show that indecomposable structures
correspond to Cayley permutations that are both simple and primitive.
This approach allows us to prove that the only exceptional Cayley
permutations are the simple parallel alternations.
In Section~\ref{sec:rgf}, we show how inflation works for restricted
growth functions and use this to enumerate the simple ones.
In Section~\ref{finite-simple-classes}, we establish that a class of
Cayley permutations with finitely many simple members has a
finite basis and an algebraic generating function.

\section{Hertzsprung intervals and simple Cayley permutations}\label{h-intervals}

Throughout this paper, we will use the following terminology.
Let $\omega$ be a word. We say that $\beta$ is a \emph{factor} of $\omega$ if there
are words $\alpha$ and $\gamma$ such that $\omega=\alpha\beta\gamma$.
If $\alpha$ is empty we also say that $\beta$ is a \emph{prefix}
of $\omega$, and if, in addition, $\gamma$ is nonempty then $\beta$ is a
\emph{proper prefix} of $\omega$. Similarly, if $\gamma$ is empty we say
that $\beta$ is a \emph{suffix} of $\omega$, and that $\beta$ is a
\emph{proper suffix} if, in addition, $\alpha$ is nonempty.
The \emph{standardization} of $\omega$ is the word $\st(\omega)$
obtained by replacing the smallest letter of $\omega$ by $1$, the
next smallest by $2$, and so on, keeping repetitions equal.
Note that $\st(\omega)$ is a Cayley permutation.

The second author~\cite{ClaHertz22} calls a permutation
$\tau$ of length $k$ a \emph{Hertzsprung factor} of a permutation
$\pi$ if there is a nonnegative integer $c$ and a factor $\beta=b_1b_2\cdots b_k$
of $\pi$ such that $b_i-\tau(i) = c$ for each $i\in[k]$. That is,
the factor $\beta$ is a translate $\tau+c$ of $\tau$.

We shall extend this definition to Cayley permutations.
Let $\omega=w_1\cdots w_n\in\Cay_n$. If $I=[a,b]\subseteq[n]$ is a
nonempty positional interval, write
\[
 \omega|_I=w_aw_{a+1}\cdots w_b
 \qquad\text{and}\qquad
 \omega(I)=\{w_a,w_{a+1},\ldots,w_b\}
\]
for the factor of $\omega$ at positions $I$ and for its set of values.
More generally, if $S=\{i_1<\cdots<i_r\}\subseteq[n]$, write
$\omega|_S=w_{i_1}\cdots w_{i_r}$ for the corresponding subword
and $\omega(S)$ for its values.
We say that $I$ is a \emph{Hertzsprung interval}, or \emph{H-interval},
of $\omega$ if there is a value interval $J=[c,d]$ such that
\begin{equation}\label{eq:h-interval}
 \omega(I)=J
 \quad\text{and}\quad
 \omega^{-1}(J)=I.
\end{equation}
Thus, an H-interval occupies consecutive positions and uses every value
in a consecutive range, which we sometimes call a \emph{value band}.
Additionally, it contains every copy in $\omega$ of every value in $J$ due
to the second equality, which we call the \emph{saturation condition}.
In particular, $[n]$ is always an H-interval. A singleton
$[i,i]=\{i\}$ is an H-interval precisely when the value $\omega(i)$ occurs
nowhere else in $\omega$.
These two types of singletons (unique/repeated) will play different roles in
inflations (see Section~\ref{inflation}).
Note also that the saturation condition is vacuous for permutations,
so the usual definition of permutation interval is obtained as
a special case of the one just given.
Unlike in a permutation, the same factor may appear more than once
in a Cayley permutation. For example, $12$ appears in $1212$ at both
$[1,2]$ and $[3,4]$. We therefore specify an occurrence by its
positional interval.

Let $\tau\in\Cay$. An \emph{occurrence} of $\tau$ in $\omega$ is an
H-interval $I$ of $\omega$ where $\st(\omega|_I)=\tau$. Equivalently, we have
\[
 \omega|_I=\tau+c
\]
for an integer $c\geq0$ and no entry of $\omega$ outside $I$ takes a value
in $[c+1,c+\max(\tau)]$. We call $\tau$ a \emph{Hertzsprung factor} of
$\omega$. Every H-interval is an occurrence
of exactly one pattern, namely $\st(\omega|_I)$, and a singleton H-interval
is an occurrence of $1$. \emph{Hertzsprung prefixes} and
\emph{Hertzsprung suffixes} are defined in the natural way.

\begin{example}\label{example-weak-fails}
In $\omega=121$ the factor $\omega|_{[1,2]}=12$ uses consecutive positions and
consecutive values, but the value $1$ occurs again in the third
position. Hence the saturation condition fails, and $[1,2]$ is not
an occurrence of $12$. Similarly, $[1,2]$ is not an
occurrence of $11$ in $111$.
The H-intervals of $121$ are the singleton $\{2\}$ and the whole
word $[1,3]$, while the only H-interval of $111$ is $[1,3]$.
\end{example}

The reader familiar with mesh patterns~\cite{BraCla11} and
Cayley-mesh patterns~\cite{CeSortCay} may notice that an
occurrence of $\tau$ is an occurrence of the Cayley-mesh pattern
whose underlying Cayley permutation is $\tau$ and where every box is
shaded except for the four corner boxes. For instance, an occurrence
of $121$ is an occurrence of the Cayley-mesh pattern
\[
\begin{tikzpicture}[xscale=0.5,yscale=0.62,
  dot/.style={circle,fill,inner sep=0pt,minimum size=5.1pt}]
\fill[NE-lines] (2,0) rectangle (3,3);
\fill[NE-lines] (1,0) rectangle (2,3);
\fill[NE-lines] (0,0.85) rectangle (4,2.15);
\draw [semithick] (0,0.85) -- (4,0.85);
\draw [semithick] (0,1.15) -- (4,1.15);
\draw [semithick] (0,1.85) -- (4,1.85);
\draw [semithick] (0,2.15) -- (4,2.15);
\draw [semithick] (1,0) -- (1,3);
\draw [semithick] (2,0) -- (2,3);
\draw [semithick] (3,0) -- (3,3);
\node[dot] at (1,1) {};
\node[dot] at (2,2) {};
\node[dot] at (3,1) {};
\end{tikzpicture}
\]
and the algebraic condition~\eqref{eq:h-interval} is a
convenient definition of containment in this sense.

Let us briefly motivate the terminology just introduced.
For permutations, a block in the sense of Albert and
Atkinson~\cite{AlbAtk05}, also commonly called an interval~\cite{Bri10},
occupies consecutive positions and
carries a consecutive set of values, and is therefore an H-interval.
We avoid both of the established names. ``Block'' is kept for the blocks
of a ballot
(where an H-interval is a union of consecutive blocks, see
Proposition~\ref{ballot}). ``Interval'' is used for positional and
value intervals. Interval is also the name under which the
permutation patterns literature treats these sets as the
modules of a relational structure.
This coincidence is special to permutations and does not extend to
Cayley permutations, where the modules form a larger family
than the H-intervals (see Section~\ref{sec:modules}).

We are now ready to define simplicity on Cayley permutations.

\begin{definition}
A Cayley permutation $\omega\in\Cay_n$ is \emph{simple} if every
H-interval of $\omega$ is a singleton or $[n]$.
\end{definition}

As noted before, for permutations
the saturation condition is always satisfied and
the definition of simplicity restricts to the usual one.
All Cayley permutations of length one and two are simple by
definition. Example~\ref{example-weak-fails} shows that
$121$ and $111$ are simple. An example of a non-simple Cayley
permutation is $122$: it contains the proper
H-interval $[2,3]$, which is an occurrence of~$11$ and
occupies value band $[2,2]=\{2\}$.
There are three simple Cayley permutations of length three
and fifteen of length four, listed below:
\[
\begin{array}{l}
111, 212, 121;\\[0.5em]
1111, 2212, 1121, 2121, 2122, 3132, 2131, 2132,\\
3142, 1211, 1212, 1312, 2312, 2313, 2413.
\end{array}
\]

Recall from Section~\ref{intro} that Cayley permutations encode
ballots. Namely, to the Cayley permutation $\omega\in\Cay_n$
we associate the ballot of $[n]$ defined by
\[
 \varpi=F_1F_2\cdots F_m,
 \qquad F_j=\omega^{-1}(j).
\]
In this context, the fibres, $F_j$, are called \emph{blocks}.
For instance, the Cayley permutation $\omega=211434125$ is associated
with the ballot
\[
\varpi = \{2,3,7\}\;\{1,8\}\;\{5\}\;\{4,6\}\;\{9\}.
\]
We now interpret simplicity in terms of ballots, which will make
the proofs of some results in the coming sections
more convenient to state. To start, we characterize their H-intervals.

\begin{proposition}\label{ballot}
  Let $\varpi=F_1F_2\cdots F_m$ be the ballot associated with
  $\omega\in\Cay_n$. Then $I\subseteq[n]$ is an H-interval of
  $\omega$ if and only if $I$ is an interval of $[n]$ and
  \[
    I=F_c\cup F_{c+1}\cup\cdots\cup F_d
  \]
  for some $1\leq c\leq d\leq m$.
  Furthermore, such an $I$ is an occurrence of $\tau\in\Cay$ if and
  only if subtracting $\min(I)-1$ from every element of the factor
  $F_c\cdots F_d$ turns it into the ballot of $\tau$.
\end{proposition}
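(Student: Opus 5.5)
The plan is to unwind the definition~\eqref{eq:h-interval} using the identity $\omega^{-1}(J)=\bigcup_{j\in J}F_j$. This identity holds because $F_j=\omega^{-1}(j)$ by construction of the ballot.

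\emph{Forward direction.} Suppose $I$ is an H-interval with value band $J=[c,d]$. By definition $I$ is a positional interval. Since $\omega(I)=J\subseteq[m]$, we have $1\le c\le d\le m$, and the saturation condition gives $I=\omega^{-1}(J)=F_c\cup\cdots\cup F_d$.

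\emph{Converse.} Suppose $I$ is an interval of $[n]$ with $I=F_c\cup\cdots\cup F_d$. Then $\omega^{-1}([c,d])=I$ holds immediately. For the remaining equality $\omega(I)=[c,d]$, note that every block $F_j$ of a ballot is nonempty, since $\img(\omega)=[m]$. Hence each $j\in[c,d]$ is attained on $F_j\subseteq I$, and every element of $I$ lies in some $F_j$ with $j\in[c,d]$. So $J=[c,d]$ witnesses that $I$ is an H-interval.

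\emph{Occurrence statement.} Let $I=[a,b]$ with value band $[c,d]$, and set $\tau=\st(\omega|_I)$. Because the values of $\omega|_I$ are exactly $[c,d]$, standardization is just the translation $\tau(i)=w_{a+i-1}-(c-1)$ for $i\in[b-a+1]$. So the $k$th block of $\tau$ is
\[
\{\,i : w_{a+i-1}=c+k-1\,\}=\{\,p-(a-1) : p\in F_{c+k-1}\,\},
\]
where the second equality uses that every $p\in F_{c+k-1}$ lies in $I$. Therefore the ballot of $\tau$ is $(F_c-(a-1))\cdots(F_d-(a-1))$, with $a=\min(I)$.

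Conversely, suppose the shifted factor $F_c\cdots F_d$ is the ballot of $\tau$. The same computation shows that $\st(\omega|_I)$ has this ballot. Since the correspondence between ballots and Cayley permutations is a bijection, $\st(\omega|_I)=\tau$, so $I$ is an occurrence of $\tau$.

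No step is a real obstacle. The points needing care are the nonemptiness of blocks, which is used in the converse to get $\omega(I)=[c,d]$, and keeping the positional shift by $\min(I)-1$ separate from the value shift by $c-1$ in the occurrence statement.
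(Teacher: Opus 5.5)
Your proposal is correct and follows the paper's proof essentially step for step: saturation gives the forward direction, nonemptiness of blocks gives the converse, and standardization acts as a translation by $c-1$ in value, so the blocks of $\st(\omega|_I)$ are the blocks $F_c,\ldots,F_d$ shifted by $\min(I)-1$ in position. You also state explicitly that $F_{c+k-1}\subseteq I$ is needed in the block computation, a point the paper leaves implicit.
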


\begin{proof}
  Let $I$ be an H-interval, with value band $J=[c,d]$ as
  in~\eqref{eq:h-interval}. Then the saturation condition gives
  \[
    I=\omega^{-1}(J)=\bigcup_{j=c}^{d}\omega^{-1}(j)=F_c\cup F_{c+1}\cup\cdots\cup F_d,
  \]
  as wanted. Conversely, suppose that $I=F_c\cup\cdots\cup F_d$ is an
  interval of $[n]$, and put $J=[c,d]$. Every block is nonempty,
  so $\omega(I)=J$, and $\omega^{-1}(J)=F_c\cup\cdots\cup F_d=I$.
  Hence~\eqref{eq:h-interval} holds and $I$ is an H-interval.

  For the second claim, let $a=\min(I)$. Position $i$ of $\omega|_I$
  corresponds to position $a+i-1$ of $\omega$. Since $\omega(I)=[c,d]$, the
  letter $j$ occurs in $\st(\omega|_I)$ at position $i$ if and only if
  $\omega(a+i-1)=c+j-1$. Thus, the $j$th block of the ballot of
  $\st(\omega|_I)$ is obtained from $F_{c+j-1}$ by subtracting $a-1$ from
  each element. Therefore, subtracting $a-1$ from every element of
  $F_c\cdots F_d$ gives the ballot of $\st(\omega|_I)$. This is the ballot
  of $\tau$ if and only if $I$ is an occurrence of $\tau$.
\end{proof}

In ballot terms, an H-interval is a run of consecutive blocks whose
union is an interval of $[n]$. More precisely, an H-interval $I$
covering the value band $\omega(I)=[c,d]$ corresponds to the blocks
$F_c\cup F_{c+1}\cup\cdots\cup F_d$, whose union is $I$. Because each
block is included in full, saturation is automatic. In particular, a
singleton position is an H-interval precisely when it is a singleton
block.
In the example given before Proposition~\ref{ballot}, the H-intervals
of the Cayley permutation $\omega=211434125$ correspond to those of the
ballot $\varpi = \{2,3,7\}\{1,8\}\{5\}\{4,6\}\{9\}$ as follows:
\begin{align*}
[1,9]&\quad\longleftrightarrow\quad \varpi;\\
[1,8]&\quad\longleftrightarrow\quad \{2,3,7\}\{1,8\}\{5\}\{4,6\};\\
[4,6]&\quad\longleftrightarrow\quad \{5\}\{4,6\};\\
[5,5]&\quad\longleftrightarrow\quad \{5\};\\
[9,9]&\quad\longleftrightarrow\quad \{9\}.
\end{align*}

Thus, simplicity for ballots can be stated as follows.

\begin{corollary}\label{ballot-simple}
  Let $\omega\in\Cay_n$ and let $\varpi$ be the corresponding ballot.
  Then $\omega$ is simple if and only if the only unions of consecutive
  blocks of $\varpi$ that are intervals of~$[n]$ are the singleton
  blocks and $[n]$ itself.
\end{corollary}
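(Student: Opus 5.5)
The corollary follows by unpacking the definition of simplicity and then translating it with Proposition~\ref{ballot}. By definition, $\omega$ is simple exactly when every H-interval of $\omega$ is either a singleton or all of $[n]$. Proposition~\ref{ballot} says that the H-intervals of $\omega$ are precisely the sets $F_c\cup F_{c+1}\cup\cdots\cup F_d$ with $c\le d$ that happen to be intervals of $[n]$. So the plan is to replace ``H-interval'' by ``union of consecutive blocks that is an interval of $[n]$''. The only remaining work is to check that the two families of allowed exceptions agree: singleton H-intervals on one side, and singleton blocks on the other.

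The key step is the following identification. A singleton H-interval $\{i\}$ is a union of consecutive blocks, each of which is nonempty. A union of at least one nonempty block that equals a one-element set must consist of exactly one block, namely $F_{\omega(i)}=\{i\}$. So singleton H-intervals are the same thing as singleton blocks. This agrees with the remark after Proposition~\ref{ballot}. Conversely, a singleton block $F_j=\{i\}$ is trivially an interval of $[n]$, and hence is an H-interval. The whole set $[n]$ corresponds to the union of all blocks $F_1\cup\cdots\cup F_m$.

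With this in hand, both directions are immediate. Suppose $\omega$ is simple, and take a union of consecutive blocks that is an interval of $[n]$. By Proposition~\ref{ballot} it is an H-interval, so it is a singleton or $[n]$. If it is a singleton, it is a singleton block by the key step. Conversely, suppose the stated condition on $\varpi$ holds, and take any H-interval $I$. By Proposition~\ref{ballot}, $I$ is a union of consecutive blocks that is an interval of $[n]$, so $I$ is a singleton block (in particular a singleton) or $I=[n]$. Hence $\omega$ is simple.

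There is no real obstacle here. The one point that needs care is that ``singleton'' in the definition of simplicity refers to a single position, while the corollary speaks of singleton blocks. The nonemptiness of blocks is what closes this gap, since it rules out a one-element union made of more than one block.
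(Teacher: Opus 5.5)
Your proof is correct and follows the paper's route: the corollary comes directly from Proposition~\ref{ballot}, together with the remark made just before the statement that a singleton position is an H-interval exactly when it is a singleton block. Your explicit check of that identification, using that the blocks are nonempty and pairwise disjoint, is the detail the paper leaves implicit.
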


For instance, $121$ has ballot $\{1,3\}\{2\}$.  The union $\{1,3\}$
is not an interval of $[3]$, so the only H-intervals are the
singleton block $\{2\}$ and $[3]$, and $121$ is simple.
More generally, in a simple Cayley permutation, no block of size two
or more may be an interval of $[n]$ unless it is all of $[n]$.

\section{Inflation and the substitution decomposition}\label{inflation}

Given a permutation $\sigma$ of length $k$ and nonempty
permutations $\alpha_1,\dots,\alpha_k$, the inflation of $\sigma$
by $\alpha_1,\dots,\alpha_k$ is the permutation
$\sigma[\alpha_1,\dots,\alpha_k]$ obtained by replacing each entry
$\sigma(i)$ of $\sigma$ with an interval that is order isomorphic
to $\alpha_i$, keeping the relative order of the $\sigma(i)$'s.

We shall define the inflation of Cayley permutations similarly,
but with one additional constraint:
only letters appearing exactly once in the Cayley permutation
may be inflated by more than one letter. This prevents
positions carrying the same value from being expanded into
competing value bands, which would make the process ambiguous.
Formally, let $\sigma\in\Cay_k$.
Associate a nonempty Cayley permutation
$\alpha_i$ with each position $i$, subject to the restriction
\begin{equation}\label{eq:repeat-restriction}
  |\sigma^{-1}(\sigma(i))|>1 \quad\Longrightarrow\quad \alpha_i=1.
\end{equation}
The \emph{inflation} $\sigma[\alpha_1,\ldots,\alpha_k]$ is the
Cayley permutation obtained by replacing each entry $\sigma(i)$ with
a factor order isomorphic to $\alpha_i$, with every nonsingleton
factor forming an H-interval, while preserving the relative order of
the entries of $\sigma$.
We call the positional interval occupied by the entries
corresponding to $\alpha_i$ the $i$th \emph{part} of the inflation.
For example,
\[
  312143[1,1,221,1,12,1] = 4\, 1\, 332\, 1\, 56\, 4.
\]
The two copies of $1$ and the two copies of $3$ are forced to
remain single letters due to~\eqref{eq:repeat-restriction}; the
unique copy of $2$ is inflated by $221$ and becomes the factor $332$;
and the unique copy of $4$ is inflated by $12$ to the factor $56$.
The six parts of the inflation are
\begin{equation}\label{eq:parts}
\{1\},\{2\}, [3,5], \{6\}, [7,8], \{9\}.
\end{equation}

Now, let $\omega$ be a Cayley permutation. An \emph{H-interval
decomposition} of $\omega$ is a partition of its positions into consecutive
intervals, each of which is an H-interval or a single position.
Contracting the parts of an H-interval decomposition (and
standardizing) yields a Cayley permutation, called the \emph{quotient}.
Repeated letters in the quotient come only from singleton parts.
This distinction matches the two cases of~\eqref{eq:repeat-restriction}:
an H-interval part contracts to a letter that occurs once in the
quotient, whereas a singleton part whose value occurs elsewhere
contracts to a repeated letter.
Contracting a Cayley permutation to obtain a quotient is the inverse
of inflating the quotient.
For instance, an H-interval decomposition of $\omega=413321564$ is
given by the six parts of~\eqref{eq:parts}. In general, the
H-interval decomposition is not unique. The same $\omega$ admits
the H-interval decomposition
\[
4\, 13321\, 56\, 4
\quad\longrightarrow\quad
\{1\},[2,6],[7,8],\{9\}
\]
and contracting its parts yields the quotient $2132$. Conversely,
we see that
\[
2132[1,13321,12,1] = 4\, 13321\, 56\, 4 = \omega.
\]
Although H-interval decompositions need not be unique, requiring a
simple quotient leaves only a controlled ambiguity: the quotient is
unique, and the parts are unique except for the quotients $12$ and
$21$, where an indecomposability condition restores uniqueness. This
will be established below. We first record two elementary closure properties
of H-intervals.

\begin{lemma}\label{union-intersection}
Let $I$ and $I'$ be two H-intervals of a Cayley permutation $\omega$
such that $I\cap I'\neq\emptyset$.
\begin{enumerate}
\item The union $I\cup I'$ is an H-interval.
\item The intersection $I\cap I'$ is an H-interval with
  $\omega(I\cap I')=\omega(I)\cap \omega(I')$.
\end{enumerate}
\end{lemma}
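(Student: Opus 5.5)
The plan is to work directly with the defining condition~\eqref{eq:h-interval}. Write $J=\omega(I)=[c,d]$ and $J'=\omega(I')=[c',d']$, so that $\omega^{-1}(J)=I$ and $\omega^{-1}(J')=I'$. Everything should follow from two facts. First, preimages commute with unions and intersections. Second, two positional (resp.\ value) intervals that meet have an interval as union and as intersection.

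\textbf{Step 1: the value bands overlap.} Since $I\cap I'\neq\emptyset$, pick $i\in I\cap I'$. Then $\omega(i)\in J\cap J'$, so $J\cap J'\neq\emptyset$. Consequently $J\cup J'$ and $J\cap J'$ are value intervals. Likewise $I\cup I'$ and $I\cap I'$ are positional intervals, the latter nonempty by hypothesis.

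\textbf{Step 2: the union.} We have $\omega^{-1}(J\cup J')=\omega^{-1}(J)\cup\omega^{-1}(J')=I\cup I'$. Also $\omega(I\cup I')=\omega(I)\cup\omega(I')=J\cup J'$. By Step 1, $J\cup J'$ is a value interval, so~\eqref{eq:h-interval} holds for $I\cup I'$ with value band $J\cup J'$.

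\textbf{Step 3: the intersection.} The saturation half is immediate: $\omega^{-1}(J\cap J')=\omega^{-1}(J)\cap\omega^{-1}(J')=I\cap I'$. The main point, though still short, is the equality $\omega(I\cap I')=J\cap J'$, because images do not commute with intersections in general. The inclusion $\subseteq$ is clear. For $\supseteq$, take $v\in J\cap J'$. Since $v\in J=\omega(I)$, some position $p\in I$ has $\omega(p)=v$. Since $v\in J'$, saturation of $I'$ gives $p\in\omega^{-1}(J')=I'$. Hence $v\in\omega(I\cap I')$. Thus $\omega(I\cap I')=J\cap J'=\omega(I)\cap\omega(I')$, which is a value interval by Step 1, and $I\cap I'$ is a nonempty positional interval. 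So $I\cap I'$ is an H-interval with the claimed value set.

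Alternatively, the whole argument can be phrased via Proposition~\ref{ballot}. In that language, $I$ and $I'$ are unions of consecutive runs of blocks $F_c\cup\cdots\cup F_d$ and $F_{c'}\cup\cdots\cup F_{d'}$. The runs of indices overlap because the sets share a position, so the union and intersection are again unions of consecutive runs of blocks, and they are intervals of $[n]$. The direct argument above is self-contained, so I would present that one.
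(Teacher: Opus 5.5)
Your proof is correct and is essentially the paper's argument with Proposition~\ref{ballot} unpacked: the paper identifies $I$ and $I'$ with the runs of blocks $F_c\cdots F_d$ and $F_{c'}\cdots F_{d'}$, which share the block $F_{\omega(\ell)}$, and this is your observation that the value bands meet and that $\omega^{-1}$ commutes with unions and intersections. Your Step~3 also spells out the equality $\omega(I\cap I')=\omega(I)\cap\omega(I')$ via saturation, which the paper leaves implicit in its block description.
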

\begin{proof}
  We use the ballot viewpoint of Proposition~\ref{ballot}.
  Each H-interval of $\omega$ is a union
  of consecutive blocks of the ballot $\varpi$ that form a set of
  consecutive elements. More precisely, write $\omega(I)=[c,d]$ and
  $\omega(I')=[c',d']$, so that $I$ and $I'$ are the runs of blocks
  $F_c\cdots F_d$ and $F_{c'}\cdots F_{d'}$.
  Suppose that $I$ and $I'$ overlap and let $\ell\in I\cap I'$.
  Then $F_c\cdots F_d$ and $F_{c'}\cdots F_{d'}$ share the block
  $F_{\omega(\ell)}$. Hence their union is again a union of consecutive
  blocks that form a set of consecutive elements, and
  Proposition~\ref{ballot} settles the first claim.
  Similarly, $I\cap I'$ is the run of blocks indexed
  by $[c,d]\cap[c',d']$, so Proposition~\ref{ballot} applies again
  and gives the second claim.
\end{proof}

Define direct and skew sums by
\[
  \alpha\oplus\beta=12[\alpha,\beta]
  \quad\text{and}\quad
  \alpha\ominus\beta=21[\alpha,\beta].
\]
A Cayley permutation is \emph{plus indecomposable} if it cannot be
expressed as a direct sum of shorter Cayley permutations.
Analogously, it is \emph{minus indecomposable} if it cannot be
expressed as a skew sum of shorter Cayley permutations.

The next two lemmas are used in the proof of
Theorem~\ref{substitution-thm}.

\begin{lemma}\label{extreme-cut}
  Let $I=[1,b]$ be a proper H-interval of $\omega\in\Cay_n$, and suppose
  that its value band $J$ contains $1$ or $\max(\omega)$. Then $[b+1,n]$
  is an H-interval of $\omega$. Furthermore, writing
  $\alpha=\st(\omega|_{[1,b]})$ and $\beta=\st(\omega|_{[b+1,n]})$, we have
  \[
    \omega=\begin{cases}
      \alpha\oplus\beta & \text{if }1\in J,\\
      \alpha\ominus\beta & \text{if }\max(\omega)\in J.
    \end{cases}
  \]
\end{lemma}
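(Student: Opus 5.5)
The proof is a direct argument from the definition~\eqref{eq:h-interval}, best phrased in the ballot viewpoint of Proposition~\ref{ballot}. Write $\omega(I)=J=[c,d]$ and set $m=\max(\omega)$. Since $I$ is proper, $b<n$, and by saturation $J\neq[1,m]$: otherwise $\omega^{-1}(J)=[n]\neq I$.

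Consider first the case $1\in J$. Then $J=[1,d]$ with $d<m$, and by Proposition~\ref{ballot} we have $I=F_1\cup\cdots\cup F_d=[1,b]$. The complement $[b+1,n]$ is therefore the union of the remaining blocks $F_{d+1}\cup\cdots\cup F_m$. It is a union of consecutive blocks and also a positional interval, so Proposition~\ref{ballot} shows it is an H-interval with value band $[d+1,m]$.

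We now identify $\omega$ as a direct sum. Put $\alpha=\st(\omega|_{[1,b]})$, which equals $\omega|_{[1,b]}$ because its values are exactly $[1,d]$. Put $\beta=\st(\omega|_{[b+1,n]})$, which equals $\omega|_{[b+1,n]}-d$. Hence $\omega=\alpha\,(\beta+d)$. We must check that this is $12[\alpha,\beta]$ as defined in the inflation. The quotient $12$ has no repeated letters, so restriction~\eqref{eq:repeat-restriction} imposes nothing. The two parts $[1,b]$ and $[b+1,n]$ are H-intervals, as just shown, and they preserve the relative order, since all values in the first part lie below all values in the second. So $\omega=\alpha\oplus\beta$.

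The case $m\in J$ is symmetric. Now $J=[c,m]$ with $c>1$, so $I=F_c\cup\cdots\cup F_m$ and $[b+1,n]=F_1\cup\cdots\cup F_{c-1}$, which is again an H-interval. This time $\alpha=\omega|_{[1,b]}-(c-1)$ and $\beta=\omega|_{[b+1,n]}$, with the first part lying above the second, so $\omega=21[\alpha,\beta]=\alpha\ominus\beta$.

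The main point needing care is the ambiguity when $J$ contains both $1$ and $m$. This is excluded by properness, as noted at the start, so the two cases of the statement are disjoint. The remaining step is just confirming that the inflation definition is met, in particular the requirement that nonsingleton parts be H-intervals, and this is exactly what the complement argument supplies.
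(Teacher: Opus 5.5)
Your proof is correct and follows essentially the same route as the paper. Both arguments write $I=F_c\cup\cdots\cup F_d$ via Proposition~\ref{ballot} and identify $[b+1,n]$ as the complementary run of blocks, so it is an H-interval. Both use properness to make the two cases exclusive, and read off $\alpha\oplus\beta$ or $\alpha\ominus\beta$ from the value comparison. Your explicit check that the two parts meet the inflation definition is a small detail the paper leaves implicit.
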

\begin{proof}
  Let $m=\max(\omega)$ and write the ballot of $\omega$ as
  $\varpi=F_1F_2\cdots F_m$. Write $J=[c,d]$. By
  Proposition~\ref{ballot},
  \[
    I=F_c\cup F_{c+1}\cup\cdots\cup F_d.
  \]
  Since the blocks partition $[n]$ and $I=[1,b]$, we have
  \[
    [b+1,n]=
    \begin{cases}
      F_{d+1}\cup\cdots\cup F_m & \text{if }c=1,\\
      F_1\cup\cdots\cup F_{c-1} & \text{if }d=m.
    \end{cases}
  \]
  These are the only cases under the hypothesis, and they are mutually
  exclusive because $I$ is proper. In either case, $[b+1,n]$ is a
  positional interval that is the union of consecutive blocks, so it is
  an H-interval by Proposition~\ref{ballot}.

  If $c=1$, every value in $I$ is smaller than every value in
  $[b+1,n]$, and therefore $\omega=\alpha\oplus\beta$. If $d=m$, every
  value in $I$ is larger than every value in $[b+1,n]$, and therefore
  $\omega=\alpha\ominus\beta$.
\end{proof}

\begin{lemma}\label{project}
  Let $\omega=\sigma[\alpha_1,\ldots,\alpha_k]$ be an inflation with
  parts $C_1,\ldots,C_k$, and let $T\subseteq[k]$ be a nonempty positional
  interval. Let $C=\bigcup_{i\in T}C_i$. Then
  \[
    \text{$C$ is an H-interval of $\omega$} \quad\Longleftrightarrow\quad
    \text{$T$ is an H-interval of $\sigma$.}
  \]
\end{lemma}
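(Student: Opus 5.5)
Write $T=[s,t]$. Since the parts $C_1,\ldots,C_k$ are consecutive positional intervals in order, $C=C_s\cup\cdots\cup C_t$ is automatically a positional interval of $\omega$. So the whole question is about values, and the plan is to work with the ballot characterization of Proposition~\ref{ballot}. Write $\varpi=F_1\cdots F_m$ for the ballot of $\sigma$ and $G_1\cdots G_M$ for the ballot of $\omega$.

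The key structural observation comes directly from the definition of inflation. Each value $j$ of $\sigma$ is replaced by a value band $B_j$ of $\omega$, and these bands are consecutive and ordered as $B_1<B_2<\cdots<B_m$ (the relative order of entries is preserved). Two cases arise.
\begin{itemize}
\item If $|F_j|>1$, then by \eqref{eq:repeat-restriction} every position in $F_j$ is inflated by $1$. In that case $B_j$ is a single value, and the block of $\omega$ with that value is exactly $\bigcup_{i\in F_j}C_i$.
\item If $F_j=\{i\}$, then $B_j=\omega(C_i)$. Since $C_i$ is an H-interval (or a single position with a unique value), the union of the blocks of $\omega$ with values in $B_j$ equals $C_i$.
\end{itemize}
In both cases we get
\[
\omega^{-1}(B_j)=\bigcup_{i\in F_j}C_i .
\]
This identity is the lemma I will prove first. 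It is the main point requiring care, because one must check that the saturation of the inflated H-interval parts prevents values in $B_j$ from appearing elsewhere.

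With this identity in hand, both directions follow quickly.

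($\Leftarrow$) If $T$ is an H-interval of $\sigma$, then by Proposition~\ref{ballot} we have $T=F_c\cup\cdots\cup F_d$. It follows that
\[
C=\bigcup_{j=c}^d\bigcup_{i\in F_j}C_i=\omega^{-1}(B_c\cup\cdots\cup B_d).
\]
Here $B_c\cup\cdots\cup B_d$ is a consecutive value band and $C$ is a positional interval, so $C$ is an H-interval.

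($\Rightarrow$) Suppose $C$ is an H-interval with value band $J$. Any $j$ with $F_j\cap T\ne\emptyset$ gives $B_j\cap J\neq\emptyset$. By saturation, every position of $\omega$ carrying a value of $B_j\cap J$ lies in $C$. I then need to show that all of $F_j$ lies in $T$.
\begin{itemize}
\item If $|F_j|>1$, then $B_j$ is a single value, so every $C_i$ with $i\in F_j$ meets $C$. Since the parts are disjoint, $i\in T$.
\item If $F_j=\{i\}$, this is immediate.
\end{itemize}
Thus $T$ is a union of full blocks $F_j$. Since $\omega(C)=\bigcup_{j:F_j\cap T\neq\emptyset}B_j$ must be the consecutive band $J$ and the $B_j$ are ordered consecutively, the indices $j$ form an interval $[c,d]$. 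Proposition~\ref{ballot} then shows $T$ is an H-interval of $\sigma$.

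The main obstacle is formalizing the identity $\omega^{-1}(B_j)=\bigcup_{i\in F_j}C_i$ from the somewhat informal definition of inflation. I would state explicitly that the bands $B_j$ partition $[\max\omega]$ in increasing order and that $\omega(C_i)=B_{\sigma(i)}$.
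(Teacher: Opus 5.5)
Your proof is correct and is essentially the paper's argument: the paper also rests on the value bands $J_j$ partitioning $[\max(\omega)]$ in increasing order with $\omega(C_i)=J_{\sigma(i)}$, from which your identity $\omega^{-1}(B_j)=\bigcup_{i\in F_j}C_i$ follows at once because every position lies in exactly one part, so your case split is not really needed. From there it likewise combines saturation of $C$ with the interval property of $\omega(C)$ to get $\sigma^{-1}([a,b])=T$, and checks $\omega^{-1}(J_a\cup\cdots\cup J_b)=C$ for the converse; the remaining differences are cosmetic (you route through Proposition~\ref{ballot} and apply saturation before the consecutiveness step, while the paper verifies~\eqref{eq:h-interval} directly with $a=\min\sigma(T)$ and $b=\max\sigma(T)$).
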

\begin{proof}
  For each value $j$ of $\sigma$, let $J_j$ be the value band of $\omega$
  corresponding to $j$. These bands partition $[\max(\omega)]$, with
  $\max J_j<\min J_{j+1}$ for $1\leq j<\max(\sigma)$, and
  \[
    \omega(C_i)=J_{\sigma(i)}.
  \]

  First suppose that $C$ is an H-interval of $\omega$.
  Let $a=\min\sigma(T)$ and $b=\max\sigma(T)$.
  Since $a,b\in\sigma(T)$, we have $J_a\cup J_b\subseteq \omega(C)$.
  As $\omega(C)$ is a value interval, $J_j\subseteq \omega(C)$ for every
  $a\leq j\leq b$. If $\sigma(i)\in[a,b]$, it follows that
  $\omega(C_i)\subseteq \omega(C)$. The saturation of $C$ then gives
  $C_i\subseteq C$, and hence $i\in T$. The reverse implication
  $i\in T\Rightarrow\sigma(i)\in[a,b]$ follows from the definitions
  of $a$ and $b$. Therefore,
  \[
    \sigma^{-1}([a,b])=T.
  \]
  Since $\sigma$ is a Cayley permutation, every value in $[a,b]$
  occurs in $\sigma$ and therefore, by this equality, in $\sigma(T)$.
  Thus $\sigma(T)=[a,b]$, and $T$ is an H-interval of $\sigma$.

  Conversely, suppose that $T$ is an H-interval of $\sigma$, with
  $\sigma(T)=[a,b]$ and $\sigma^{-1}([a,b])=T$. Let
  \[
    J=J_a\cup J_{a+1}\cup\cdots\cup J_b.
  \]
  Because the value bands $J_1,\ldots,J_{\max(\sigma)}$ cover
  $[\max(\omega)]$, the union $J=J_a\cup\cdots\cup J_b$ equals
  $[\min J_a,\max J_b]$ and is therefore a value interval. Moreover,
  \[
    \omega(C)=\bigcup_{i\in T}\omega(C_i)
        =\bigcup_{i\in T}J_{\sigma(i)}=J.
  \]
  Every position $p$ of $\omega$ belongs to exactly one part $C_i$, and
  $\omega(p)\in J$ if and only if $\sigma(i)\in[a,b]$. Since
  $\sigma^{-1}([a,b])=T$, this is equivalent to $i\in T$. Hence
  $\omega^{-1}(J)=C$. Finally, $C$ is a positional interval because it is
  a union of consecutive parts. Thus $C$ is an H-interval of $\omega$.
\end{proof}

\begin{theorem}\label{substitution-thm}
  Let $\omega$ be a Cayley permutation of length at least two.
  Then there is a unique simple Cayley permutation $\sigma$ of length
  $k\ge 2$ such that
  \[
    \omega=\sigma[\alpha_1,\ldots,\alpha_k],
  \]
  for some nonempty Cayley permutations $\alpha_1,\ldots,\alpha_k$
  satisfying~\eqref{eq:repeat-restriction}.
  If the quotient $\sigma\notin\{12,21\}$, then the components
  $\alpha_i$ are uniquely determined.
  If $\sigma=12$, they are unique if we require $\alpha_1$ to be
  plus indecomposable. If $\sigma=21$, they are unique if we require
  $\alpha_1$ to be minus indecomposable.
\end{theorem}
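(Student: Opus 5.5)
We will mimic the classical Albert–Atkinson argument, using maximal proper H-intervals and the closure properties of Lemma~\ref{union-intersection}. The backbone is Lemma~\ref{project}: in any inflation $\omega=\sigma[\alpha_1,\ldots,\alpha_k]$, the unions of consecutive parts that are H-intervals of $\omega$ correspond exactly to the H-intervals of $\sigma$. Hence $\sigma$ is simple if and only if no union of two or more consecutive parts is an H-interval of $\omega$, other than the union of all of them.

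\textbf{Existence.} Fix $\omega$ of length $n\ge2$. Call a proper H-interval \emph{maximal} if it is not contained in any other proper H-interval.

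Suppose first that two distinct maximal proper H-intervals $I,I'$ intersect. By Lemma~\ref{union-intersection}, $I\cup I'$ is an H-interval, so maximality forces $I\cup I'=[n]$. Then one of them, say $I$, is a prefix $[1,b]$ and $I'$ is a suffix. Since $\omega(I)\cup\omega(I')=[\max\omega]$, the band of $I$ contains $1$ or $\max(\omega)$. Lemma~\ref{extreme-cut} then writes $\omega$ as $\alpha\oplus\beta$ or $\alpha\ominus\beta$, so $\omega=12[\alpha,\beta]$ or $21[\alpha,\beta]$. The restriction~\eqref{eq:repeat-restriction} is vacuous here, and the quotient is simple.

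Otherwise, the maximal proper H-intervals are pairwise disjoint. Together with the positions not covered by any of them, taken as singletons, they form a decomposition into consecutive parts. Let $\sigma$ be the quotient. We check that condition~\eqref{eq:repeat-restriction} holds: an H-interval part contracts to a letter unique in $\sigma$, since its band is saturated. A singleton position whose value is unique is itself an H-interval, so it is contained in some maximal one, and therefore uncovered positions carry only repeated values. This only needs $n\ge2$, so that singletons are proper.

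Next, $\sigma$ is simple. If $T$ were a proper non-singleton H-interval of $\sigma$, then by Lemma~\ref{project} the union $C$ of its parts would be a proper H-interval of $\omega$ strictly containing a maximal one, or else a new H-interval made of several singleton parts. The latter is itself contained in a maximal proper H-interval, which would then intersect the uncovered positions, a contradiction.

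We also need $k\ge2$. This holds because the parts are proper.

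\textbf{Uniqueness.} Take any decomposition with simple quotient $\sigma$ of length $k\ge2$.

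If $\sigma\notin\{12,21\}$, we show that its parts are exactly the maximal proper H-intervals, plus singletons. Each H-interval part is proper. Let $M$ be a maximal proper H-interval containing a part. If $M$ is not a union of parts, it overlaps some part $C_i$ partially. We would like to use $M\cup C_i$, or $M$ intersected with parts, together with Lemma~\ref{union-intersection} to produce a union of consecutive parts that is an H-interval. The cleanest route is to show that the set $T$ of indices $i$ with $C_i\cap M\neq\emptyset$ is such that $\bigcup_{i\in T}C_i$ is an H-interval. Overlapping H-interval parts merge by Lemma~\ref{union-intersection}; repeated-value singletons lie in $M$ by saturation. By Lemma~\ref{project} and simplicity, this union is then a single part or all of $[n]$.

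The hard case, which I expect to be the main obstacle, is when this union is $[n]$ while $M$ is proper. Then $M$ meets the first and last parts. One shows that the complementary pieces are H-intervals, as in Lemma~\ref{extreme-cut}, which forces $\sigma$ to have a proper H-interval in the form of a prefix or suffix unless $k=2$. So $\sigma\in\{12,21\}$, a contradiction.

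Uniqueness of $\sigma$ across the two cases follows because the two cases are distinguished intrinsically: whether maximal proper H-intervals overlap.

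For $\sigma=12$, requiring $\alpha_1$ to be plus indecomposable means choosing the shortest prefix H-interval whose band contains $1$. Any two decompositions $\alpha\oplus\beta=\alpha'\oplus\beta'$ with $\alpha,\alpha'$ plus indecomposable must coincide: otherwise the shorter prefix would split the longer $\alpha$ via Lemma~\ref{extreme-cut} applied inside $\alpha$. The case $\sigma=21$ is symmetric.
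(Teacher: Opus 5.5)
\textbf{Where the proof breaks.} Your route differs from the paper's. For existence, the paper takes an inflation with the fewest parts and, if the quotient is not simple, merges parts via Lemma~\ref{project}; you instead build the decomposition from the maximal proper H-intervals. For uniqueness, the paper splits on whether $\omega$ itself is plus or minus decomposable, while you split on the shape of a putative quotient.

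Your existence argument is correct. Your argument that a simple quotient $\sigma\notin\{12,21\}$ has as parts exactly the maximal proper H-intervals and the remaining singletons also goes through once the hard case is written out. Maximality of $M$ forces exactly one end part, say $C_1$, to stick out of $M$, with $M\cup C_1=[n]$. Since $M$ is proper and saturated, the band of $C_1$ contains $1$ or $\max(\omega)$. Lemma~\ref{extreme-cut} then makes $C_2\cup\cdots\cup C_k$ an H-interval, and Lemma~\ref{project} forces $k=2$. Run this argument for every maximal $M$, not only those containing a part, so that every maximal proper H-interval is shown to be a part.

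The genuine gap is the sentence asserting that uniqueness of $\sigma$ across your two cases follows because they are distinguished by whether maximal proper H-intervals overlap. That dichotomy does not separate the quotients $12,21$ from longer ones. The maximal proper H-intervals are disjoint for $12$, for $1122=12[11,11]$, and in fact for every $\alpha\oplus\beta$ with both summands plus indecomposable. So your Case B also produces $12$ and $21$.

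Your uniqueness argument shows that at most one simple quotient outside $\{12,21\}$ can occur. Nothing in the proposal excludes a word that admits such a quotient and also the quotient $12$, or a word that admits both $12$ and $21$. The paper closes exactly this gap first, by showing that a plus decomposable word has $12$ as its only simple quotient (it locates the cut point relative to the parts).

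Within your framework the fix is short. First, no word is both plus and minus decomposable, since both $\omega(1)<\omega(n)$ and $\omega(1)>\omega(n)$ would hold. Second, suppose $\omega=\alpha\oplus\beta$ also had a simple quotient $\sigma$ of length $k\geq3$. By your argument, the maximal proper H-interval containing $[1,|\alpha|]$ is a part, necessarily $C_1$, and its band contains $1$. Lemmas~\ref{extreme-cut} and~\ref{project} then make $[2,k]$ a proper nonsingleton H-interval of $\sigma$, contradicting simplicity.
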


\begin{proof}
  Assume $\omega\in\Cay_n$ with $n\geq 2$. We first prove existence. Among all
  inflations $\omega=\sigma[\alpha_1,\ldots,\alpha_k]$ with $k\geq2$, choose one
  with $k$ minimal. Such an inflation exists, since $\omega=\omega[1,\ldots,1]$.
  If $\sigma$ were not simple, it would have an H-interval $T$ with
  $2\leq|T|<k$. By Lemma~\ref{project}, the union of the parts indexed by
  $T$ is an H-interval of $\omega$. Merge these parts into one, standardizing
  the factor on their union. The merged value band shares no value with
  any remaining part, so its letter in the new quotient occurs only once;
  the restrictions at all other repeated quotient letters are preserved.
  The resulting inflation has $k-|T|+1$ parts. Since
  $2\leq |T|<k$, this number lies between $2$ and $k-1$, contradicting
  the minimality of $k$. Hence $\sigma$ is simple.

  We now prove uniqueness.
  Suppose first that $\omega$ is plus decomposable. Thus, for some
  $b\in[n-1]$, we have $\omega(i)<\omega(j)$ for all positions $i\leq b<j$.
  Consider any expression of $\omega$ as an inflation of a simple quotient
  with at least two parts. If $b$ is a boundary between
  parts, the parts on either side express the quotient as a direct sum.
  If $b$ lies inside a part, that part has at least two positions, so its
  quotient letter occurs only once by~\eqref{eq:repeat-restriction}. Its
  value band contains values from positions on both sides of $b$.
  Since the value bands of distinct quotient letters are disjoint, every
  earlier quotient letter is smaller and every later quotient letter is
  larger. Separating the quotient immediately before or after this letter,
  choosing the option that leaves both sides nonempty, expresses it as a
  direct sum. If the quotient had length at least three, one summand would
  be a proper H-interval of length at least two. Thus a simple quotient
  must be $12$.

  Choose the smallest $b\in[n-1]$ with this property. The
  standardized prefix is plus indecomposable: otherwise its direct-sum
  decomposition would give a smaller such value of $b$. For any other
  $b'>b$ with this property, the standardized prefix ending at $b'$
  is plus decomposable, with its first summand ending at $b$. Hence $b$
  gives the unique decomposition whose first component is plus indecomposable.
  Reversing the value comparisons proves that a minus decomposable Cayley
  permutation has simple quotient $21$ and a unique decomposition whose
  first component is minus indecomposable.

  It remains to consider $\omega$ that is neither plus nor minus decomposable.
  Its maximal proper H-intervals are pairwise disjoint. Indeed, if two
  distinct ones overlapped, their union would be $[n]$ by
  Lemma~\ref{union-intersection} and maximality. One would be a proper
  prefix and the other a proper suffix. Their value bands cover
  $[\max(\omega)]$, and neither contains both extreme values, since saturation
  would make the corresponding H-interval equal to $[n]$. The prefix must
  therefore contain $1$ or $\max(\omega)$, so Lemma~\ref{extreme-cut} gives a
  direct-sum or skew-sum decomposition, a contradiction.

  These maximal H-intervals, together with the singleton sets formed by the
  remaining positions, therefore partition $[n]$. Consider any inflation of $\omega$
  with a simple quotient of length at least two. Each nonsingleton part is
  a proper H-interval and is therefore contained in a maximal one; each
  singleton part is contained in the unique member of the partition that
  contains its position. Thus the parts refine this partition. If a maximal
  H-interval were the union of at least two parts, Lemma~\ref{project} would
  give a proper nonsingleton H-interval of the quotient, contradicting its
  simplicity. Consequently, the parts are exactly the maximal H-intervals
  and the uncovered singleton positions. Their standardized factors and
  their quotient are uniquely determined.
\end{proof}

The remaining simple quotient of length two, $\sigma=11$, causes no
ambiguity. Its two positions carry the same value, so
\eqref{eq:repeat-restriction} forces both components to be $1$; its
only inflation is the Cayley permutation $11=11[1,1]$ itself.

\section{Enumeration of simple Cayley permutations}\label{enumeration}

We shall adapt the enumeration of simple
permutations~\cite{AlbAtkKla03} to Cayley permutations. The main
difference is that a position carrying a repeated letter in the quotient
can only be inflated by the one-letter Cayley permutation $1$.
Given $\omega\in\Cay$, let
\[
  \mu(\omega)=\#\{j:|\omega^{-1}(j)|=1\}
\]
denote the number of singleton fibres of $\omega$, and define the
bivariate ordinary generating function
\[
  B(x,t)=\sum_{\omega\in\Cay_+}x^{|\omega|}t^{\mu(\omega)}
  =\sum_{n\geq1}B_n(t)x^n,
  \qquad
  B_n(t)=\sum_{\omega\in\Cay_n}t^{\mu(\omega)}.
\]
The ballot viewpoint is convenient to determine an exponential
generating function for the polynomials $B_n(t)$.

\begin{proposition}\label{B-egf}
  The polynomials $B_n(t)$ are determined by
  \[
    1+\sum_{n\geq1}B_n(t)\frac{z^n}{n!} \,=\, \frac{1}{2-e^z-(t-1)z}.
  \]
\end{proposition}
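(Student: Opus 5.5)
We prove the identity through the ballot correspondence of Section~\ref{h-intervals}. A Cayley permutation $\omega\in\Cay_n$ is the same thing as a ballot $\varpi=F_1F_2\cdots F_m$ of $[n]$, that is, a sequence of nonempty disjoint blocks whose union is $[n]$. Under this correspondence $\mu(\omega)$ is the number of blocks of size one, because $|\omega^{-1}(j)|=|F_j|$. So $B_n(t)$ counts ballots of $[n]$ with each singleton block weighted by $t$ and every other block weighted by $1$. This is a labelled structure: a sequence of sets, with weights attached to the set sizes. We can therefore use the exponential formula for sequences.

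First, consider a single block on a labelled set of size $k\ge1$. There is exactly one such block, with weight $t$ if $k=1$ and weight $1$ if $k\ge2$. Its weighted exponential generating function is
\[
  b(z)=tz+\sum_{k\ge2}\frac{z^k}{k!}=e^z-1+(t-1)z .
\]

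Second, a ballot with $m$ blocks is an ordered $m$-tuple of blocks together with a distribution of the labels $[n]$ among them. By the product rule for exponential generating functions, the weighted egf of ballots with exactly $m$ blocks is $b(z)^m$. Here the weight is multiplicative over blocks, so $t^{\mu}$ factors correctly. The empty Cayley permutation is the case $m=0$ and contributes the leading $1$.

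Summing over $m\ge0$ gives
\[
  1+\sum_{n\ge1}B_n(t)\frac{z^n}{n!}=\sum_{m\ge0}b(z)^m=\frac{1}{1-b(z)}=\frac{1}{2-e^z-(t-1)z}.
\]
This is legitimate as an identity of formal power series in $z$ over $\mathbb{Z}[t]$, since $b(z)$ has zero constant term.

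The main point needing care is that the product rule matches ballots exactly. An ordered $m$-tuple of blocks, each carrying a set of labels, corresponds to a unique ballot: block $j$ is the fibre $\omega^{-1}(j)$. Nonemptiness of the blocks is guaranteed because $b(z)$ has no constant term. This matches the requirement that $\img(\omega)=[m]$. Beyond this check, the argument is routine.

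As a sanity check, at $t=1$ we recover the known egf $1/(2-e^z)$ of ordered set partitions (Fubini numbers).
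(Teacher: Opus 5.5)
Your proof is correct and is essentially the paper's own argument. Both read a Cayley permutation as a ballot, give singleton blocks the exponential generating function $tz$ and larger blocks $e^z-1-z$, and take a sequence of blocks to obtain $1/(1-(tz+e^z-1-z))=1/(2-e^z-(t-1)z)$.
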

\begin{proof}
  View a Cayley permutation as a ballot.  A singleton block has
  exponential generating function $tz$, while a block of size at least two has
  exponential generating function $e^z-1-z$.  Taking a sequence of such
  nonempty blocks gives
  \[
    \frac{1}{1-\bigl(tz+e^z-1-z\bigr)}
    =\frac{1}{2-e^z-(t-1)z}.\qedhere
  \]
\end{proof}

The first few of these polynomials are listed in Table~\ref{B-table}.

\begin{table}[ht]
  \centering
  \renewcommand{\arraystretch}{1.2}
  \begin{tabular}{c@{\qquad}l}
    \toprule
    $n$ & $B_n(t)$\\
    \midrule
    1 & $t$\\
    2 & $1+2t^2$\\
    3 & $1+6t+6t^3$\\
    4 & $7+8t+36t^2+24t^4$\\
    5 & $21+100t+60t^2+240t^3+120t^5$\\
    6 & $141+372t+1170t^2+480t^3+1800t^4+720t^6$\\
    7 & $743+3584t+5166t^2+13440t^3+4200t^4+15120t^5+5040t^7$\\
    \bottomrule
  \end{tabular}
  \caption{The polynomials $B_n(t)=\sum_{\omega\in\Cay_n}t^{\mu(\omega)}$ for $n\leq7$.}
  \label{B-table}
\end{table}

Let $\stirling{n}{k}$ denote the Stirling number of the second kind, that
is, the number of partitions of $[n]$ into $k$ blocks. Let
$\stirling{n}{k}_{\!\geq2}$ denote the number of such partitions in which
every block has size at least two.

\begin{proposition}\label{B-explicit}
  For $n\geq1$,
  \[
    B_n(t)
    =\sum_{k=1}^{n}k!\sum_{j=0}^{k}\binom{n}{j}\stirling{n-j}{k-j}(t-1)^j.
  \]
  Moreover, for $0\leq j\leq n$,
  \[
    [t^j]\,B_n(t)
    =\binom{n}{j}\sum_{i\geq0}(i+j)!\,\stirling{n-j}{i}_{\!\geq2}.
  \]
\end{proposition}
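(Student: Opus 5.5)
Both formulas follow from the ballot interpretation of Proposition~\ref{B-egf}. A Cayley permutation of length $n$ with $k$ letters is an ordered set partition of $[n]$ into $k$ blocks, and $\mu(\omega)$ is the number of singleton blocks. We count these directly and do not extract coefficients from the exponential generating function.

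\textbf{The coefficient formula.} We establish the second identity first, since it is purely combinatorial.

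1. Fix $j$. To build a ballot with exactly $j$ singleton blocks, first choose the $\binom{n}{j}$ elements that form the singletons.

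2. Partition the remaining $n-j$ elements into $i$ blocks, each of size at least two, in $\stirling{n-j}{i}_{\geq2}$ ways.

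3. Linearly order all $i+j$ blocks, in $(i+j)!$ ways.

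Summing over $i$ gives $[t^j]B_n(t)$.

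\textbf{The first formula.} We use inclusion--exclusion, that is, the substitution $t=(t-1)+1$, applied to marked singletons. We have
\[
  t^{\mu}=\sum_{S}(t-1)^{|S|},
\]
where $S$ ranges over subsets of the singleton blocks. Hence $B_n(t)$ counts pairs (ballot, set $S$ of marked singleton blocks) with weight $(t-1)^{|S|}$.

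1. Fix the number $k$ of blocks and $j=|S|$.

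2. Choose the $j$ marked elements in $\binom{n}{j}$ ways.

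3. Partition the remaining $n-j$ elements into $k-j$ unrestricted blocks, in $\stirling{n-j}{k-j}$ ways.

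4. Order all $k$ blocks, in $k!$ ways.

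This map is a bijection onto marked ballots. The unmarked blocks may themselves be singletons, which is exactly what the expansion of $t^{\mu}$ requires. Summing over $1\le k\le n$ and $0\le j\le k$ gives the stated formula. Terms with $j>n$ vanish automatically through the binomial coefficient, and $\stirling{0}{0}=1$ handles $j=k=n$.

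As a cross-check, the first formula can also be derived analytically. Write
\[
  \frac{1}{1-u}\quad\text{with}\quad u=(e^z-1)+(t-1)z .
\]
Expanding $u^k$ binomially gives the term $\binom{k}{j}(t-1)^jz^j(e^z-1)^{k-j}$. Using $(e^z-1)^{m}=m!\sum_N\stirling{N}{m}z^N/N!$ and $k!/(k-j)!\cdot\binom{k}{j}^{-1}$-type bookkeeping, this reproduces $k!\binom{n}{j}\stirling{n-j}{k-j}$. The combinatorial argument avoids this bookkeeping, so the analytic version serves only as a consistency check.

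\textbf{Main obstacle.} The only delicate point is justifying that the marked-ballot count equals the coefficient extraction in the first formula. Concretely, one must check that each ballot with $\mu$ singletons is counted once for each subset $S$ of its singletons, with no double counting across choices of $k$. This holds because $k$ is determined by the ballot.

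We will also verify both formulas against Table~\ref{B-table} for $n=2,3$. For example, at $n=2$ the first formula gives $1+2t^2$.
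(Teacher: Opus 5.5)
Your proof is correct. The argument for the coefficient formula is the same direct combinatorial count the paper gives as its alternative proof.

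For the first formula you take a different route. The paper expands $\sum_{k\geq0}\bigl(e^z-1+(t-1)z\bigr)^k$ binomially and extracts the coefficient of $z^n/n!$ using $(e^z-1)^r=r!\sum_m\stirling{m}{r}z^m/m!$. You instead write $t^{\mu}=((t-1)+1)^{\mu}$. You then biject pairs (ballot, marked set of singleton blocks) with triples: the $j$ marked elements, a partition of the remaining elements into $k-j$ blocks, and a linear order of all $k$ blocks.

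This is the combinatorial reading of the paper's splitting of the block weight as $(e^z-1)+(t-1)z$, with your marked singleton playing the role of the $(t-1)z$ term. So the underlying decomposition is the same. What your version buys is independence from Proposition~\ref{B-egf}, and a direct meaning for the coefficient of $(t-1)^j$ as the number of ballots with $j$ marked singleton blocks. The paper's version instead obtains both formulas mechanically from one generating function, by splitting the block weight in two ways: $(e^z-1)+(t-1)z$ and $tz+(e^z-1-z)$.

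The bookkeeping phrase in your analytic cross-check is garbled. The actual computation is $\binom{k}{j}(k-j)!\,\frac{n!}{(n-j)!}\stirling{n-j}{k-j}=k!\binom{n}{j}\stirling{n-j}{k-j}$. Since you use it only as a consistency check, the proof is unaffected.
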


\begin{proof}
  Using
  \[
    (e^z-1)^r=r!\sum_{m\geq0}\stirling{m}{r}\frac{z^m}{m!},
  \]
  we can expand the generating function in Proposition~\ref{B-egf} as
  \begin{align*}
    1+\sum_{n\geq1}B_n(t)\frac{z^n}{n!}
    &=\sum_{k\geq0}\bigl(e^z-1+(t-1)z\bigr)^k\\
    &=\sum_{k\geq0}\sum_{j=0}^{k}\binom{k}{j}
      (t-1)^jz^j(e^z-1)^{k-j}\\
    &=\sum_{k\geq0}\sum_{j=0}^{k}\binom{k}{j}(t-1)^jz^j
      (k-j)!\sum_{m\geq0}\stirling{m}{k-j}\frac{z^m}{m!}\\
    &=1+\sum_{n\geq1}\left(
      \sum_{k=1}^{n}k!\sum_{j=0}^{k}\binom{n}{j}
      \stirling{n-j}{k-j}(t-1)^j\right)\frac{z^n}{n!}.
  \end{align*}
  Comparing coefficients of $z^n/n!$ proves the first formula.
  For the second formula, use
  \[
    (e^z-1-z)^i
    =i!\sum_{m\geq0}\stirling{m}{i}_{\!\geq2}\frac{z^m}{m!}.
  \]
  Extracting the coefficient of $t^j$ from the generating function and
  expanding gives
  \begin{align*}
    [t^j]\left(1+\sum_{n\geq1}B_n(t)\frac{z^n}{n!}\right)
    &=[t^j]\sum_{r\geq0}\bigl(tz+e^z-1-z\bigr)^r\\
    &=z^j\sum_{i\geq0}\binom{i+j}{j}(e^z-1-z)^i\\
    &=z^j\sum_{i\geq0}\binom{i+j}{j}i!
      \sum_{m\geq0}\stirling{m}{i}_{\!\geq2}\frac{z^m}{m!}\\
    &=\sum_{n\geq j}\left(\binom{n}{j}\sum_{i\geq0}(i+j)!
      \stirling{n-j}{i}_{\!\geq2}\right)\frac{z^n}{n!}.
  \end{align*}
  Comparing coefficients of $z^n/n!$ proves the second formula.

  Alternatively, the latter formula follows by a direct combinatorial argument.
  A ballot of $[n]$ with exactly $j$ singletons is obtained by choosing the $j$
  elements in those blocks, partitioning the remaining $n-j$ elements into
  $i$ blocks of size at least two, and linearly ordering all $i+j$ blocks.
  For each $i$, these choices give
  $\binom{n}{j}(i+j)!\stirling{n-j}{i}_{\!\geq2}$ ballots, and summing over
  $i$ gives the formula.
\end{proof}

To enumerate simple Cayley permutations and several of their subclasses at
once, let $Q$ be a property of nonempty Cayley permutations such that $Q(1)$
holds and, for every inflation,
\begin{equation}\label{eq:inflation-compatible}
  Q\bigl(\sigma[\alpha_1,\ldots,\alpha_k]\bigr)
  \quad\Longleftrightarrow\quad
  Q(\sigma)\ \text{and}\ Q(\alpha_i)\text{ for every }i\in [k].
\end{equation}
No closure under classical containment is required. Define
\[
  F_Q(x,t)=\sum_{\substack{\omega\in\Cay_+\\Q(\omega)}}x^{|\omega|}t^{\mu(\omega)},
  \qquad
  H_Q(x,y)=\sum_{\substack{\sigma\text{ simple},\ |\sigma|\geq3\\Q(\sigma)}}
    x^{|\sigma|-\mu(\sigma)}y^{\mu(\sigma)}.
\]
The two variables in $H_Q$ distinguish positions in non-singleton fibres,
which can only be inflated trivially, from positions in singleton fibres,
which may be inflated by arbitrary components.

\begin{theorem}\label{enumeration-compatible}
  For a property $Q$ satisfying $Q(1)$ and~\eqref{eq:inflation-compatible},
  let $\chi_{\omega}$ be $1$ if $Q(\omega)$ holds and $0$ otherwise, for each word $\omega$.
  Then
  \[
    F_Q(x,t)=tx+\chi_{11}x^2+
      \frac{\bigl(\chi_{12}+\chi_{21}\bigr)F_Q(x,t)^2}
        {1+F_Q(x,t)}+H_Q\bigl(x,F_Q(x,t)\bigr).
  \]
\end{theorem}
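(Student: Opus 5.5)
We partition the nonempty Cayley permutations $\omega$ with $Q(\omega)$ according to the unique simple quotient given by Theorem~\ref{substitution-thm}, and compute the contribution of each quotient to $F_Q$. The length-one term is $\omega=1$. It contributes $tx$, since $Q(1)$ holds and $\mu(1)=1$.

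For $|\omega|\ge2$ the quotient $\sigma$ is simple of length $k\ge2$.

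\emph{Quotient $11$.} The only inflation of $11$ is $11$ itself. By~\eqref{eq:inflation-compatible} it has property $Q$ exactly when $Q(11)$ holds, and $\mu(11)=0$. This gives the term $\chi_{11}x^2$.

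\emph{Quotient $\sigma$ of length at least three.} The components are unique by Theorem~\ref{substitution-thm}. Each position in a non-singleton fibre of $\sigma$ carries $\alpha_i=1$, and the positions in singleton fibres carry arbitrary nonempty $\alpha_i$. Conversely, every such choice of components produces an inflation with simple quotient $\sigma$. By~\eqref{eq:inflation-compatible}, $Q(\omega)$ holds if and only if $Q(\sigma)$ and all $Q(\alpha_i)$ hold.

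The key bookkeeping step is to check that $|\omega|=\sum|\alpha_i|$ and $\mu(\omega)=\sum_{i\in S}\mu(\alpha_i)$, where $S$ is the set of positions of $\sigma$ in singleton fibres. The reason is as follows.
\begin{itemize}
\item A repeated letter of $\sigma$ becomes a repeated letter of $\omega$ with the same multiplicity, so it contributes $x$ and no power of $t$.
\item A singleton-fibre component $\alpha_i$ occupies its own value band, disjoint from all others. Hence its singleton fibres are exactly the singleton fibres of $\omega$ lying in that band.
\end{itemize}
So each position in a non-singleton fibre contributes $x$, and each singleton-fibre position contributes a factor $F_Q(x,t)$. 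Summing over $\sigma$ gives $H_Q(x,F_Q(x,t))$.

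\emph{Quotient $12$.} By Theorem~\ref{substitution-thm}, $\omega=\alpha_1\oplus\alpha_2$ uniquely with $\alpha_1$ plus indecomposable and $\alpha_2$ arbitrary. Iterating, $\omega$ is uniquely a direct sum $\beta_1\oplus\cdots\oplus\beta_r$ of $r\ge2$ plus indecomposable pieces. Here $\mu$ and length are additive, since the value bands are disjoint. By~\eqref{eq:inflation-compatible} applied with $12$, and since $12$ is $Q$ exactly when $\chi_{12}=1$, the following holds when $\chi_{12}=1$: $\omega$ is $Q$ if and only if every $\beta_j$ is $Q$.

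Let $P$ be the generating function (in $x,t$) of plus indecomposable $Q$-members. Then every $Q$-member is a sum of $r\ge1$ plus indecomposables, so $F_Q=P/(1-P)$, that is $P=F_Q/(1+F_Q)$. The sum of the $r\ge2$ terms is $F_Q-P=F_Q^2/(1+F_Q)$.

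When $\chi_{12}=0$, no $Q$-member has quotient $12$. To see this, note that if $\omega=12[\alpha_1,\alpha_2]$ had property $Q$, then by~\eqref{eq:inflation-compatible} so would $12$, a contradiction. The case of quotient $21$ is symmetric, using minus indecomposables, and gives $\chi_{21}F_Q^2/(1+F_Q)$.

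I expect the main obstacle to be the $\oplus$ step. One must verify that the iterated decomposition into plus indecomposables is unique and that $Q$ passes to the pieces. This uses~\eqref{eq:inflation-compatible} repeatedly on $12[\beta_1,\beta_2\oplus\cdots]$, together with the fact that $\mu$ is additive over direct sums. A smaller point needing care is that the simple quotients of length two are exactly $11$, $12$, $21$. Adding the five contributions gives the stated identity.
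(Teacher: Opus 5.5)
Your proof is correct and follows essentially the same route as the paper. You partition the $Q$-members by the unique simple quotient of Theorem~\ref{substitution-thm}, force the component $1$ at repeated quotient letters, leave the components free at singleton fibres, and treat $12$ and $21$ through plus and minus indecomposables. The only difference is cosmetic: the paper reads off $F_Q=I_+(1+F_Q)$ directly from the one-step decomposition $\omega=\alpha\oplus\beta$ with $\alpha$ plus indecomposable, so the iterated uniqueness you flagged as the main obstacle is not needed. Your explicit check that $\mu$ is additive is a welcome addition. The paper, for its part, also remarks that substituting $F_Q$ into $H_Q$ is a well-defined formal series, a point worth adding to your write-up.
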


\begin{proof}
  Write $F=F_Q(x,t)$. We partition the words counted by $F$ according to the
  unique simple quotient supplied by Theorem~\ref{substitution-thm}.

  The word $1$ has weight $tx$. For the quotient $11$, the restriction
  in~\eqref{eq:repeat-restriction} forces both components to be $1$, so this
  quotient contributes $\chi_{11}x^2$.

  Now suppose that $Q(12)$ holds, and let $I_+$ be the generating function
  for nonempty plus indecomposable words satisfying $Q$. Every word counted
  by $F$ is uniquely either plus indecomposable or a direct sum
  $12[\alpha,\beta]$ in which $\alpha$ is plus indecomposable. Moreover,
  by~\eqref{eq:inflation-compatible}, both components satisfy $Q$. Since the
  weight is multiplicative under direct sums, this gives
  \[
    F=I_++I_+F=I_+(1+F)
    \quad\Longrightarrow\quad
    I_+ = \frac{F}{1+F}.
  \]
  Hence the words whose quotient is $12$ contribute
  \[
    I_+F=\frac{F^2}{1+F}.
  \]
  If $Q(12)$ does not hold,~\eqref{eq:inflation-compatible} excludes every
  nontrivial direct sum. The same argument with skew sums and minus
  indecomposable words applies to the quotient $21$. Thus the quotients $12$
  and $21$ together contribute
  \[
    \bigl(\chi_{12}+\chi_{21}\bigr)\frac{F^2}{1+F}.
  \]

  It remains to consider a simple quotient $\sigma$ of length at least three.
  If a value occurs more than once in $\sigma$, then
  \eqref{eq:repeat-restriction} forces the component at each of its positions
  to be $1$; each such position contributes $x$. A value that occurs once may
  instead be inflated by any word counted by $F$, and therefore contributes
  $F$. There are $|\sigma|-\mu(\sigma)$ positions of the first kind and
  $\mu(\sigma)$ of the second. Hence the inflations of $\sigma$ contribute
  \[
    x^{|\sigma|-\mu(\sigma)}F^{\mu(\sigma)}.
  \]
  By~\eqref{eq:inflation-compatible}, such inflations satisfy $Q$ exactly when
  $\sigma$ does. Summing over the eligible quotients gives $H_Q(x,F)$. This
  substitution is well defined as a formal series: since $F$ has zero
  constant term, the displayed contribution has degree at least $|\sigma|$.
  Adding the four contributions proves the formula.
\end{proof}

We now take $Q$ to be the property that holds for every nonempty Cayley
permutation. In this unrestricted case, $F_Q=B$ and
$\chi_{11}=\chi_{12}=\chi_{21}=1$. Write $S_{\geq3}=H_Q$; explicitly,
\[
  S_{\geq3}(x,y)
  =\sum_{\substack{\sigma\text{ simple}\\|\sigma|\geq3}}
    x^{|\sigma|-\mu(\sigma)}y^{\mu(\sigma)}.
\]
Theorem~\ref{enumeration-compatible} now specializes as follows.

\begin{theorem}\label{bivariate}
  The generating function for simple Cayley permutations satisfies
  \[
    B(x,t)=tx+x^2+\frac{2B(x,t)^2}{1+B(x,t)}+S_{\geq3}\bigl(x,B(x,t)\bigr).
  \]
\end{theorem}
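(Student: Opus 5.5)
This follows by specializing Theorem~\ref{enumeration-compatible} to the trivial property $Q$ that holds for every nonempty Cayley permutation. The plan is to check that this $Q$ is admissible, identify each term of the general identity, and substitute.

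First, $Q(1)$ holds trivially. Condition~\eqref{eq:inflation-compatible} also holds: both sides are always true, since every inflation of nonempty Cayley permutations is again a nonempty Cayley permutation. Here we use that inflation, as defined in Section~\ref{inflation}, always produces a Cayley permutation.

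Next, identify the ingredients.
\begin{itemize}
\item Since $Q$ holds for every nonempty Cayley permutation, $F_Q(x,t)$ sums $x^{|\omega|}t^{\mu(\omega)}$ over all of $\Cay_+$, which is exactly $B(x,t)$.
\item The words $11$, $12$ and $21$ all satisfy $Q$, so $\chi_{11}=\chi_{12}=\chi_{21}=1$.
\item $H_Q$ sums over all simple $\sigma$ with $|\sigma|\ge3$, which is by definition $S_{\geq3}$.
\end{itemize}
Substituting these into the identity of Theorem~\ref{enumeration-compatible} gives
\[
B(x,t)=tx+x^2+\frac{2B(x,t)^2}{1+B(x,t)}+S_{\geq3}\bigl(x,B(x,t)\bigr).
\]

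The only subtlety is that the substitution $y=B$ into $S_{\geq3}$ must be well defined as a formal power series. This was already handled in the proof of Theorem~\ref{enumeration-compatible}: because $B$ has zero constant term, each term $x^{|\sigma|-\mu(\sigma)}B^{\mu(\sigma)}$ has total $x$-degree at least $|\sigma|$. Hence only finitely many simple $\sigma$ contribute to each coefficient.

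I expect no real obstacle. The argument is a direct corollary once one checks that inflation preserves membership in $\Cay_+$.
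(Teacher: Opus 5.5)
Your proposal is correct and matches the paper's own argument. The paper obtains this identity exactly by specializing Theorem~\ref{enumeration-compatible} to the property that holds for every nonempty Cayley permutation, so that $F_Q=B$, $\chi_{11}=\chi_{12}=\chi_{21}=1$, and $H_Q=S_{\geq3}$.
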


The identity in Theorem~\ref{bivariate} rewrites as
\[
    S_{\geq3}\bigl(x,B(x,t)\bigr)
    =B(x,t)-tx-x^2-\frac{2B(x,t)^2}{1+B(x,t)}.
\]

In their enumeration of simple permutations, Albert, Atkinson, and
Klazar~\cite{AlbAtkKla03} use the coefficients of the compositional inverse of
$\sum_{n\ge 1}n!x^n$. These coefficients were first studied by
Comtet~\cite[p.~171]{Com74}.
We use the following general inversion lemma.

\begin{lemma}\label{U-lemma}
  Let $F(x,t)=tx+\sum_{k\geq2}F_k(t)x^k$, where $F_k(t)\in\mathbb{Z}[t]$.
  There is a unique formal power series $V(x)\in\mathbb{Z}[[x]]$ such that
  $F\bigl(x,V(x)\bigr)=x$. Its coefficients $v_n=[x^n]V(x)$ are given by
  $v_0=1$ and, for $n\geq1$, by
  \[
    v_n=-\sum_{k=2}^{n+1}[x^{n+1-k}]F_k\bigl(V(x)\bigr),
  \]
  where the right-hand side depends only on $v_0,\ldots,v_{n-1}$.
\end{lemma}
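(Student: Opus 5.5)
The plan is to treat $F(x,y)$ as a power series in $x$ with polynomial coefficients and to solve $F(x,V(x))=x$ coefficient by coefficient. Write
\[
  F\bigl(x,V(x)\bigr)=V(x)\,x+\sum_{k\ge2}F_k\bigl(V(x)\bigr)x^k ,
\]
noting first that this substitution is well defined. It is the term $t\mapsto V$ in $tx$ together with $F_k(V(x))x^k$, and each $F_k(V)$ is a polynomial in the series $V$. The coefficient of $x^N$ therefore only receives contributions from $k\le N$, which is a finite sum. No condition on the constant term of $V$ is needed, because $F_k$ is a polynomial rather than a series.

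Next extract coefficients. The coefficient of $x^{n+1}$ in $F(x,V(x))$ equals
\[
  v_n+\sum_{k=2}^{n+1}[x^{n+1-k}]F_k\bigl(V(x)\bigr).
\]
The requirement $F(x,V(x))=x$ means this is $1$ when $n=0$ and $0$ when $n\ge1$. At $x^0$ the coefficient is automatically $0$, since every term carries a factor of $x$. For $n=0$ the sum is empty, so $v_0=1$. For $n\ge1$ we get exactly the displayed recursion.

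The key observation is that $[x^{m}]F_k(V(x))$ depends only on $v_0,\dots,v_m$, since $F_k$ is a polynomial and products of series only mix coefficients of total degree $m$. In the sum we have $m=n+1-k\le n-1$ because $k\ge2$. So the right-hand side involves only $v_0,\dots,v_{n-1}$. Induction on $n$ then shows that the recursion determines each $v_n$ uniquely. Since the $F_k$ have integer coefficients and $v_0=1$, each $v_n$ is an integer.

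This gives existence: the series defined by the recursion satisfies every coefficient equation, hence $F(x,V)=x$. It also gives uniqueness: any solution must satisfy the same equations, and these force the same $v_n$. The only mildly delicate point is the justification of the substitution and of the dependence claim, which is the step I would write most carefully. The rest is routine bookkeeping.
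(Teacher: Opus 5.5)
Your proposal is correct and follows essentially the same route as the paper: the paper divides $F(x,V(x))=x$ by $x$ to get $V(x)=1-\sum_{k\geq2}F_k(V(x))x^{k-1}$ and extracts $[x^n]$, which is the same as your extraction of $[x^{n+1}]$, and then makes the same observation that $[x^{n+1-k}]F_k(V(x))$ involves only $v_0,\ldots,v_{n+1-k}$ with $n+1-k\leq n-1$. Your explicit check that the substitution $t=V(x)$ is well defined, because each $F_k$ is a polynomial and $F_k(V(x))x^k$ has order at least $k$, is a small addition that the paper leaves implicit.
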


\begin{proof}
  Substituting $t=V(x)$ into the given expansion of $F$ turns the required
  identity $F(x,V(x))=x$ into
  \[
    xV(x)+\sum_{k\geq2}F_k\bigl(V(x)\bigr)x^k=x.
  \]
  Dividing by $x$ and rearranging gives
  \[
    V(x)=1-\sum_{k\geq2}F_k\bigl(V(x)\bigr)x^{k-1}.
  \]
  Its constant term gives $v_0=1$. For $n\geq1$, extracting the coefficient
  of $x^n$ gives the stated recurrence; terms with $k>n+1$ make no
  contribution. For each $2\leq k\leq n+1$, the coefficient
  $[x^{n+1-k}]F_k(V(x))$ depends only on
  $v_0,\ldots,v_{n+1-k}$, and $n+1-k\leq n-1$. The recurrence therefore
  determines $v_n$ from the preceding coefficients. Starting with $v_0=1$
  constructs a unique solution coefficient by coefficient, and all its
  coefficients are integers because every $F_k$ has integer coefficients.
\end{proof}

\begin{corollary}\label{univariate-compatible}
  Under the hypotheses of Theorem~\ref{enumeration-compatible}, let $S_Q(x)$
  count the nonempty simple Cayley permutations satisfying $Q$, and let
  $V_Q(x)$ be the unique series with $F_Q(x,V_Q(x))=x$. Then
  \[
    S_Q(x)=2x-xV_Q(x)
      +\frac{\bigl(\chi_{12}+\chi_{21}\bigr)x^3}{1+x}.
  \]
\end{corollary}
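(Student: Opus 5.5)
The plan is to specialize the functional equation of Theorem~\ref{enumeration-compatible} at $t=V_Q(x)$. After this substitution $F_Q$ collapses to $x$, and the bivariate series $H_Q(x,y)$ collapses to its diagonal $H_Q(x,x)$. That diagonal is exactly the univariate generating function for simple Cayley permutations of length at least three satisfying $Q$, since the exponents $|\sigma|-\mu(\sigma)$ and $\mu(\sigma)$ add up to $|\sigma|$.

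First I check that Lemma~\ref{U-lemma} applies to $F=F_Q$. Since $Q(1)$ holds and $\mu(1)=1$, the length-one part of $F_Q$ is exactly $tx$. For each $k\geq2$ only finitely many words of length $k$ contribute, so $F_k(t)=[x^k]F_Q\in\mathbb{Z}[t]$. Hence $V_Q$ exists, is unique, and satisfies $V_Q(0)=1$. The substitution $t\mapsto V_Q(x)$ is well defined as the continuous ring homomorphism $\mathbb{Z}[t][[x]]\to\mathbb{Z}[[x]]$, because each coefficient of $x^n$ is a polynomial in $t$.

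Next I apply this homomorphism to both sides of the identity in Theorem~\ref{enumeration-compatible}. The term $H_Q(x,F_Q(x,t))$ is a legitimate element of $\mathbb{Z}[t][[x]]$, as noted in that proof. Its image is therefore $H_Q(x,F_Q(x,V_Q))=H_Q(x,x)$. Writing $c=\chi_{12}+\chi_{21}$, this gives
\[
  x = xV_Q(x)+\chi_{11}x^2+\frac{c\,x^2}{1+x}+H_Q(x,x).
\]

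Then I relate $S_Q$ to $H_Q(x,x)$ by length. The simple Cayley permutations of length one and two are $1$, $11$, $12$ and $21$, and $Q(1)$ holds. Hence
\[
  S_Q(x)=x+\chi_{11}x^2+c\,x^2+H_Q(x,x).
\]
Eliminating $H_Q(x,x)$ between the two displays, the $\chi_{11}$ terms cancel, and $c\bigl(x^2-x^2/(1+x)\bigr)=c\,x^3/(1+x)$ yields the stated formula.

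The only point needing care is the justification of the substitution inside the composite $H_Q(x,F_Q(x,t))$. This is handled by the degree bound already recorded in the proof of Theorem~\ref{enumeration-compatible}: each quotient $\sigma$ contributes only in degrees at least $|\sigma|$, so every coefficient of $x^n$ is a finite polynomial expression in $t$ and the specialization commutes with the sum. Everything else is algebra.
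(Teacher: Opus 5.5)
Your proof is correct and follows the paper's argument. Like the paper, you substitute $t=V_Q(x)$ into Theorem~\ref{enumeration-compatible} to express $H_Q(x,x)$, write $S_Q(x)=x+(\chi_{11}+\chi_{12}+\chi_{21})x^2+H_Q(x,x)$ because every Cayley permutation of length at most two is simple, and eliminate $H_Q(x,x)$ so that the $\chi_{11}$ terms cancel. Your added justification of the substitution, as a continuous homomorphism $\mathbb{Z}[t][[x]]\to\mathbb{Z}[[x]]$ that commutes with the sum defining $H_Q\bigl(x,F_Q(x,t)\bigr)$, is a welcome detail the paper leaves implicit.
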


\begin{proof}
  Since $Q(1)$ holds, the coefficient of $x$ in $F_Q(x,t)$ is $t$, so
  Lemma~\ref{U-lemma} supplies $V_Q$. Substituting $t=V_Q(x)$ in
  Theorem~\ref{enumeration-compatible} gives
  \[
    H_Q(x,x)=x-xV_Q(x)-\chi_{11}x^2
      -\frac{\bigl(\chi_{12}+\chi_{21}\bigr)x^2}{1+x}.
  \]
  The series $H_Q$ includes only simple words of length at least three. Every
  Cayley permutation of length one or two is simple, so, using $Q(1)$, we have
  \[
    H_Q(x,x)=S_Q(x)-x
      -\bigl(\chi_{11}+\chi_{12}+\chi_{21}\bigr)x^2.
  \]
  Combining the two identities proves the formula; the contributions from
  $11$ cancel.
\end{proof}

In the unrestricted case, where $Q$ holds for every nonempty Cayley
permutation, write $U=V_Q$. Thus $U$ is the unique series with
\[
  B\bigl(x,U(x)\bigr)=x.
\]
Write $u_n=[x^n]U(x)$.
By the recurrence in Lemma~\ref{U-lemma}, for example,
$u_1=-B_2(u_0)=-B_2(1)=-3$.
The first terms of $U(x)$ are
\[
  \begin{split}
    U(x)={}&1-3x-x^2-17x^3-95x^4-871x^5-9189x^6\\
    &{}-112111x^7-1537157x^8-23315921x^9+ \cdots
  \end{split}
\]

\begin{corollary}\label{univariate}
  Let $s_n$ be the number of simple Cayley permutations of length $n$
  and let $S(x)=\sum_{n\geq1}s_nx^n$. Then
  \[
    S(x)
    =2x+\frac{2x^3}{1+x}-xU(x).
  \]
\end{corollary}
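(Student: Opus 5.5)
This is the unrestricted special case of Corollary~\ref{univariate-compatible}, so the plan is to apply that corollary directly. Take $Q$ to be the property that holds for every nonempty Cayley permutation. We check the hypotheses of Theorem~\ref{enumeration-compatible}. $Q(1)$ holds trivially. Condition~\eqref{eq:inflation-compatible} holds because both sides are always true: every inflation of a nonempty Cayley permutation by nonempty components is again a nonempty Cayley permutation.

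With this choice, $F_Q=B$, so the series $V_Q$ of Corollary~\ref{univariate-compatible} is exactly $U$. It is the unique series with $B(x,U(x))=x$, and Lemma~\ref{U-lemma} applies because $B_1(t)=t$ and every $B_k(t)$ has integer coefficients (Table~\ref{B-table} and Proposition~\ref{B-explicit}). Likewise $S_Q=S$, since every simple Cayley permutation is nonempty, and $\chi_{12}=\chi_{21}=1$.

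Substituting into the formula of Corollary~\ref{univariate-compatible} gives
\[
  S(x)=2x-xU(x)+\frac{2x^3}{1+x},
\]
which is the claimed identity.

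The main point to verify is the meaning of $S_Q$. It counts every nonempty simple Cayley permutation, and its length-one and length-two terms were already reconciled in the proof of Corollary~\ref{univariate-compatible}, where the $11$ contributions cancel, so nothing needs to be redone here.

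As a sanity check, I would compare low-order coefficients. Using $U(x)=1-3x-x^2-17x^3-\cdots$, we get $-xU=-x+3x^2+x^3+17x^4+\cdots$ and $2x^3/(1+x)=2x^3-2x^4+\cdots$. Then
\[
  S(x)=x+3x^2+3x^3+15x^4+\cdots,
\]
which matches the counts given in the paper: one simple Cayley permutation of length one ($1$), three of length two ($11$, $12$, $21$), three of length three and fifteen of length four.
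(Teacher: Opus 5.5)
Your proposal is correct and matches the paper's proof: the corollary is the unrestricted case of Corollary~\ref{univariate-compatible}, with $V_Q=U$ and $\chi_{12}=\chi_{21}=1$. Your low-order coefficient check, giving $1,3,3,15$, is also accurate.
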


\begin{proof}
  This is Corollary~\ref{univariate-compatible} with $V_Q=U$, since both
  $12$ and $21$ satisfy the unrestricted property.
\end{proof}

Comparing coefficients in Corollary~\ref{univariate} gives
\[
s_n=-u_{n-1}+2(-1)^{n-1}
\]
for $n\geq3$. Hence
\begin{equation}\label{eq:initial-counts}
  (s_n)_{n\geq1}
  =1,3,3,15,97,869,9191,112109,1537159,23315919,\ldots
\end{equation}
For reference, Table~\ref{counts-table} shows these values,
alongside the Fubini numbers $|\Cay_n|$ and the number of primitive
simple Cayley permutations $\hat s_n$ of
Theorem~\ref{primitive-bivariate} below.

\begin{table}[ht]
  \centering
  \renewcommand{\arraystretch}{1.2}
  \begin{tabular}{c@{\qquad}r@{\qquad}r@{\qquad}r}
    \toprule
    $n$ & $|\Cay_n|$ & $s_n$ & $\hat s_n$\\
    \midrule
    1 & 1 & 1 & 1\\
    2 & 3 & 3 & 2\\
    3 & 13 & 3 & 2\\
    4 & 75 & 15 & 10\\
    5 & 541 & 97 & 70\\
    6 & 4683 & 869 & 634\\
    7 & 47293 & 9191 & 6742\\
    8 & 545835 & 112109 & 82306\\
    9 & 7087261 & 1537159 & 1126846\\
    10 & 102247563 & 23315919 & 17050626\\
    \bottomrule
  \end{tabular}
  \caption{Cayley permutations $|\Cay_n|$, simple Cayley permutations $s_n$,
    and primitive simple Cayley permutations $\hat s_n$.}
  \label{counts-table}
\end{table}

\section{Modules and primitive Cayley permutations}\label{sec:modules}

A \emph{relational structure} is a set together with a collection
of binary relations on it. Analogues of simplicity and the substitution
decomposition may be defined for any relational structure
(e.g., graphs and posets).
Schmerl and Trotter~\cite{SchTro93} defined \emph{indecomposable}
relational structures and proved that a relational structure is
indecomposable if and only if it contains no nontrivial
\emph{module} (sometimes called an interval in the
literature). Here, a module is a subset~$M$ of the structure
such that, for every relation $R$, every $i,j\in M$, and every $x\notin M$,
\[
  iRx\ \Longleftrightarrow\ jRx
  \qquad\text{and}\qquad
  xRi\ \Longleftrightarrow\ xRj.
\]
In other words, every two elements of $M$ are ordered with respect
to elements not in $M$ in exactly the same way with respect to all
relations.
A module is nontrivial if it has at least two elements and is a proper subset
of the ground set.
Albert and Atkinson~\cite{AlbAtk05} encoded a permutation $\pi$
of $[n]$ as the poset on $[n]$ where $x\prec y$ if and only if
$x<y$ and $\pi(x)<\pi(y)$, so that $\pi$ is simple exactly when
this poset is indecomposable in the relational structure sense.
This approach allowed them to use Schmerl and Trotter's~\cite{SchTro93}
characterization of critically indecomposable
structures to show that the only exceptional
permutations are the parallel alternations and their symmetries,
a result we extend to Cayley permutations in Section~\ref{deletions}.

The presence of repeated entries requires a slightly different approach. To each
Cayley permutation, we will associate a relational structure so that
indecomposable structures correspond to Cayley permutations that
are simple and primitive, where a Cayley permutation is
\emph{primitive}~\cite{CCEGcayspec} if it contains no flat steps,
i.e.\ pairs of equal adjacent letters.
Given a Cayley permutation $\omega\in\Cay_n$, define the
relational structure
\[
  R(\omega)=\bigl([n],\,<_1,\,<_2\bigr)
\]
on the set of positions $[n]$, carrying two binary relations:
the position order $i<_1 j$ if $i<j$, and the value order
$i<_2 j$ if $\omega(i)<\omega(j)$.
Both relations are irreflexive; $<_1$ is a linear order, and $<_2$
is a strict weak order whose ties are the positions of equal value.
The main difference compared to Albert and Atkinson~\cite{AlbAtk05}
is that the two orders are kept apart, and not merged into a
single relation.

The empty set is a module, since its defining condition is vacuous.
The next lemma characterizes the nonempty modules of $R(\omega)$.

\begin{lemma}\label{modules}
  The nonempty modules of $R(\omega)$ are the H-intervals of $\omega$ together with its
  nonempty positional intervals with constant value, which in the ballot are
  the nonempty positional intervals contained in a single block.
\end{lemma}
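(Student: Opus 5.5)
We prove both inclusions directly from the module condition, specialized to the two relations of $R(\omega)$. For the position order $<_1$, a set $M$ is a module for $<_1$ alone exactly when no position outside $M$ lies strictly between two positions of $M$. This holds because $i<x<j$ with $i,j\in M$ gives $i<_1x$ but not $j<_1x$. Conversely, if $M$ is a positional interval then every outside $x$ lies entirely to one side of $M$. So the modules of $R(\omega)$ are exactly the positional intervals $M$ satisfying, for every $x\notin M$ and $i,j\in M$,
\[
  \omega(i)<\omega(x)\iff\omega(j)<\omega(x)
  \quad\text{and}\quad
  \omega(x)<\omega(i)\iff\omega(x)<\omega(j).
\]
Call this the value condition.

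For the easy direction, we check the two families. First let $M$ be an H-interval with value band $J=[c,d]$. By saturation, every $x\notin M$ has $\omega(x)\notin J$, so $\omega(x)<c$ or $\omega(x)>d$. In either case $x$ compares the same way with every element of $M$, since $\omega(M)\subseteq J$. Second, let $M$ be a positional interval on which $\omega$ is constant. Then the value condition is trivial because all $\omega(i)$ with $i\in M$ coincide. The reformulation in the ballot is immediate: constant value $j$ on $M$ means $M\subseteq F_j$.

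For the converse, let $M$ be a nonempty module, so $M$ is a positional interval. If $\omega$ is constant on $M$, we are in the second family. Otherwise set $c=\min\omega(M)<d=\max\omega(M)$ and $J=[c,d]$; we must show $\omega(M)=J$ and $\omega^{-1}(J)=M$. The key observation is that no $x\notin M$ can have $\omega(x)\in[c,d]$. Pick $i,j\in M$ with $\omega(i)=c$ and $\omega(j)=d$. If $c<\omega(x)<d$, then $\omega(i)<\omega(x)$ but $\omega(j)\not<\omega(x)$, violating the first equivalence. If $\omega(x)=c$, then $\omega(x)<\omega(j)$ but $\omega(x)\not<\omega(i)$, violating the second. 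The case $\omega(x)=d$ is symmetric. Hence $\omega^{-1}(J)\subseteq M$, and the reverse inclusion holds by the definition of $c$ and $d$. Since $\omega$ is a Cayley permutation, every value in $J$ occurs somewhere. Such a value cannot occur outside $M$, so it occurs in $M$, giving $\omega(M)=J$. Thus $M$ is an H-interval.

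This last step is the only real content. The point to get right is that the strictness of $<_2$ is what forbids outside copies of the extreme values $c$ and $d$: a tie with the minimum or maximum of $M$ distinguishes it from an element of $M$ with a different value. This argument needs $\omega$ nonconstant on $M$, which is exactly why the constant-value intervals appear as the separate second family.
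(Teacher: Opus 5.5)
Your proof is correct and follows essentially the same route as the paper: use $<_1$ to force $M$ to be a positional interval, split on whether $\omega$ is constant on $M$, and in the nonconstant case show that no outside position takes a value in the range spanned by $M$. The paper states this last step in ballot language (every outside position lies in a block below or above all blocks meeting $M$) and concludes via Proposition~\ref{ballot}. You instead verify saturation and $\omega(M)=[c,d]$ directly from the extreme values, which spells out the role of the strict order $<_2$ that the paper leaves implicit.
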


\begin{proof}
  Let $M$ be a nonempty module. If a position $x\notin M$ lay between two
  positions of $M$, then $y<_1x$ for some $y\in M$ and $x<_1y'$ for some
  $y'\in M$, contradicting the module condition. Thus $M$ is a positional
  interval. If $\omega$ is constant on $M$, then $M$ is contained in a single
  block. Otherwise, the module condition for
  $<_2$, in both directions, forces each position outside $M$ to lie in a block
  below every block intersecting $M$ or above every such block.  In particular,
  no position outside $M$ lies in a block intersecting $M$, so $M$ is a union
  of whole blocks.  Those blocks are consecutive, since a block between two of
  them would be nonempty and its positions neither below nor above all of $M$.
  By Proposition~\ref{ballot}, $M$ is an H-interval.
  Conversely, an H-interval is a module because every block outside it lies
  wholly below or wholly above it. A positional interval contained in a single
  block is also a module.
\end{proof}

Under the encoding employed by Albert and Atkinson~\cite{AlbAtk05},
where $<_1$ and $<_2$ are merged into one relation, the
correspondence between the intervals of a permutation and the
modules of the corresponding relational structure is only partial.
For instance, the positions $2$ and $4$ in $1432$ form a module
of the poset, but not an interval. Keeping $<_1$ and $<_2$
separate restores the exact correspondence in our setting.
Indeed, if $\omega$ is a permutation then the only positional
intervals with constant value are the singletons, which are
H-intervals. Thus for permutations the nonempty modules of $R(\omega)$ are
exactly the H-intervals of $\omega$.
In general, the correspondence between modules and
H-intervals breaks on Cayley permutations:
a repeated value makes each single position carrying that value
a module that is not an H-interval.

\begin{proposition}\label{indecomposable}
  Let $\omega\in\Cay_n$ with $n\geq3$. The structure $R(\omega)$ is
  indecomposable if and only if $\omega$ is simple and primitive.
\end{proposition}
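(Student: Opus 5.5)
The plan is to work directly from Lemma~\ref{modules}, which lists all nonempty modules of $R(\omega)$. By definition, $R(\omega)$ is indecomposable exactly when it has no nontrivial module, meaning no module $M$ with $2\le|M|<n$. By Lemma~\ref{modules}, such a module is either an H-interval or a positional interval of constant value. So indecomposability of $R(\omega)$ is equivalent to two conditions holding together:
\begin{enumerate}
\item[(a)] every H-interval of $\omega$ with at least two elements equals $[n]$;
\item[(b)] every positional interval of constant value with at least two elements equals $[n]$.
\end{enumerate}
Condition~(a) is precisely the definition of $\omega$ being simple.

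The remaining step is to identify~(b) with primitivity, under the assumption $n\geq3$ and in the presence of~(a). First suppose $\omega$ is primitive. Then no two adjacent letters are equal, so every constant-value positional interval is a singleton, and~(b) holds trivially. Conversely, suppose~(b) holds and that $\omega$ has a flat step $w_i=w_{i+1}$. Then $\{i,i+1\}$ is a positional interval of constant value with two elements. Since $n\ge3$, it is proper, which contradicts~(b). Hence~(b) is equivalent to primitivity whenever $n\ge 3$.

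Putting these together: if $\omega$ is simple and primitive, then~(a) and~(b) hold, so $R(\omega)$ has no nontrivial module and is indecomposable. If $R(\omega)$ is indecomposable, then~(a) gives simplicity and~(b) gives primitivity.

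The only delicate point is the role of the hypothesis $n\ge3$. For $n=2$, the word $11$ is simple but not primitive, and its flat step $\{1,2\}$ is all of $[2]$, so it is not a nontrivial module; the equivalence would fail there. I will note this explicitly when using $n\ge 3$ to guarantee that the two-element set $\{i,i+1\}$ is a proper subset of $[n]$. Beyond that, the argument is a direct bookkeeping consequence of Lemma~\ref{modules}, and I do not expect a real obstacle.
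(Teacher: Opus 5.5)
Your proposal is correct and follows essentially the same route as the paper. Both use Lemma~\ref{modules} to split the nontrivial modules into H-intervals, which gives simplicity, and constant positional intervals, which gives primitivity once $n\geq3$ makes the flat step $\{i,i+1\}$ a proper module.
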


\begin{proof}
  Suppose $\omega$ is simple and primitive, and let $M$ be a module with
  $2\leq|M|<n$.  By Lemma~\ref{modules}, $M$ is either an H-interval or a
  constant positional interval. The first is impossible because every
  H-interval of a simple Cayley permutation is a singleton or $[n]$. The second
  is impossible because a constant interval of length at least two contains a
  flat step, which primitivity forbids. No such $M$ exists, so $R(\omega)$ is
  indecomposable.

  Conversely, let $R(\omega)$ be indecomposable.  Every H-interval is a
  module, so every H-interval is a singleton or $[n]$, which is to
  say that $\omega$ is simple.  If $\omega(i)=\omega(i+1)$ for some $i$, then $\{i,i+1\}$ is a
  constant positional interval and hence a module; it is not a singleton, and
  it is not all of $[n]$ because $n\geq3$, contradicting indecomposability.
  Hence $\omega$ is primitive.
\end{proof}

The hypothesis $n\geq3$ is needed only for the implication from
indecomposability to primitivity, since $11$ is simple and
indecomposable but not primitive.
More generally, a simple Cayley permutation is indecomposable
unless it contains a flat step.
An alternative definition of simplicity could be obtained by
letting a Cayley permutation~$\omega$ be simple if $R(\omega)$ has no
nontrivial modules. This would make $111$ not simple due to the
presence of the two modules $[1,2]$ and $[2,3]$ (both being
positional intervals with constant value).
This definition would, however, be incompatible with the substitution
decomposition.  Contracting either nontrivial module of $111$ gives $11$, but
$111$ is not an inflation of $11$, since the positions of a repeated value
cannot be inflated under~\eqref{eq:repeat-restriction}.

\subsection{Enumeration of primitive simple Cayley permutations}

We now enumerate the primitive simple Cayley permutations, which
by Proposition~\ref{indecomposable} are (for length at least three)
exactly the Cayley permutations whose associated structure is
indecomposable.
Primitivity satisfies~\eqref{eq:inflation-compatible}. Indeed, every component
of a primitive inflation is primitive because it is a standardized factor.
Its quotient is primitive too: adjacent equal quotient letters have forced
one-letter components and would give adjacent equal letters in the inflation.
Conversely, if the quotient and components are primitive, there are no equal
adjacent letters within a component, and adjacent components use disjoint
value bands because their quotient letters differ.

Write
\[
  P(x,t)=\sum_{\substack{\omega\in\Cay_+\\\omega\text{ primitive}}}x^{|\omega|}t^{\mu(\omega)}
  =\sum_{n\geq1}P_n(t)x^n
\]
for the primitive analogue of $B(x,t)$ and let
\[
  \widehat S_{\geq3}(x,y)
  =\sum_{\substack{\sigma\text{ primitive simple}\\|\sigma|\geq3}}
   x^{|\sigma|-\mu(\sigma)}y^{\mu(\sigma)}.
\]

\begin{theorem}\label{primitive-bivariate}
  The generating function for primitive simple Cayley permutations satisfies
  \[
    \widehat S_{\geq3}\bigl(x,P(x,t)\bigr)
    =P(x,t)-tx-\frac{2P(x,t)^2}{1+P(x,t)}.
  \]
  Let $W(x)$ be the unique formal power series with $P\bigl(x,W(x)\bigr)=x$.
  Then the ordinary generating function
  $\widehat S(x)=\sum_{n\geq1}\hat s_n x^n$ for primitive simple Cayley
  permutations is
  \[
  \widehat S(x)=2x+\frac{2x^3}{1+x}-x\mskip1mu W(x).
  \]
\end{theorem}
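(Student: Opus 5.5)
This is a direct specialization of Theorem~\ref{enumeration-compatible} and Corollary~\ref{univariate-compatible}, with $Q$ taken to be primitivity. We take as given the argument preceding the statement: primitivity holds for $1$ and satisfies~\eqref{eq:inflation-compatible}. It remains to identify the ingredients $F_Q$, $H_Q$ and the indicators $\chi$.

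First, $F_Q=P$ by definition. Next, $\chi_{11}=0$, since $11$ has a flat step, while $\chi_{12}=\chi_{21}=1$. Finally, $H_Q$ sums over simple $\sigma$ with $|\sigma|\ge3$ that are primitive, which is exactly $\widehat S_{\geq3}$. Substituting these into Theorem~\ref{enumeration-compatible} gives
\[
P(x,t)=tx+\frac{2P(x,t)^2}{1+P(x,t)}+\widehat S_{\geq3}\bigl(x,P(x,t)\bigr).
\]
Rearranging this yields the first identity.

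For the univariate formula, we note that the coefficient of $x$ in $P(x,t)$ is $t$, coming from the word $1$, which is primitive with $\mu=1$. Every other coefficient $P_k(t)$ lies in $\mathbb{Z}[t]$. Hence Lemma~\ref{U-lemma} supplies the unique $W$ with $P(x,W(x))=x$, and $W=V_Q$. Corollary~\ref{univariate-compatible} then states that $S_Q(x)=2x-xW(x)+2x^3/(1+x)$, where $S_Q$ counts nonempty simple Cayley permutations satisfying $Q$, that is, primitive simple ones. Therefore $S_Q=\widehat S$.

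The only point to double-check is the small-length bookkeeping in Corollary~\ref{univariate-compatible} when $\chi_{11}=0$. The length-two primitive simple words are $12$ and $21$, giving $\hat s_2=2$, and the $\chi_{11}$ terms cancel regardless of its value, so the formula holds as stated. As a sanity check, the computed coefficients can be compared with Table~\ref{counts-table}; for example, $\hat s_3=2$ should correspond to $212$ and $121$.
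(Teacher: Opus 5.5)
Your proposal is correct and follows the paper's proof: the paper also takes $Q$ to be primitivity (using the preceding argument that it satisfies~\eqref{eq:inflation-compatible}), with $F_Q=P$, $\chi_{11}=0$ and $\chi_{12}=\chi_{21}=1$. It then obtains the first identity from Theorem~\ref{enumeration-compatible}, obtains $W$ from Lemma~\ref{U-lemma}, and gets the univariate formula from Corollary~\ref{univariate-compatible} with $V_Q=W$, exactly as you do.
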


\begin{proof}
  Apply the results of Section~\ref{enumeration} to the property of being
  primitive, which satisfies~\eqref{eq:inflation-compatible} by the argument
  above. Here $F_Q=P$, the word $11$ is excluded, and both $12$ and $21$ are
  admitted. Theorem~\ref{enumeration-compatible} gives the first identity,
  Lemma~\ref{U-lemma} supplies $W$, and
  Corollary~\ref{univariate-compatible}, with $V_Q=W$, gives the second.
\end{proof}

The formulas for $S$ and $\widehat S$ have the same shape because they have
the same indicators for $12$ and $21$; the indicator for $11$ cancels in
Corollary~\ref{univariate-compatible}.

To compute the coefficients of $\widehat S$, we express $P$ in terms of $B$.
Define
\[
  \widetilde{B}(x,t)=\sum_{\omega\in\Cay_+}x^{|\omega|-\mu(\omega)}t^{\mu(\omega)}
  \quad\text{and}\quad
  \widetilde{P}(x,t)=\sum_{\substack{\omega\in\Cay_+\\\omega\text{ primitive}}}x^{|\omega|-\mu(\omega)}t^{\mu(\omega)}.
\]
Here $t$ marks positions in singleton fibres and $x$ marks all other positions.
Contracting each maximal constant run to a single letter gives a unique
primitive Cayley permutation. Conversely, we recover the original word by
expanding each position into its corresponding nonempty constant run.
A position in a singleton fibre may expand into a run of length one,
contributing $t$, or a run of length at least two, whose value is then
repeated. Summing the weights of the latter runs gives $x^2/(1-x)$, so the
possible expansions of that position have total weight
\[
  t+\frac{x^2}{1-x},
\]
whereas the possible expansions of each position carrying a repeated value
have total weight $x/(1-x)$. Consequently,
\[
  \widetilde{B}(x,t)
  =\widetilde{P}\!\left(\frac{x}{1-x},\,t+\frac{x^2}{1-x}\right).
\]
Since
$B(x,t)=\widetilde{B}(x,tx)$ and $P(x,t)=\widetilde{P}(x,tx)$, we have
\[
  B(x,t)=P\!\left(\frac{x}{1-x},\,t(1-x)+x\right),
\]
and inverting this substitution gives
\[
  P(x,t)=B\!\left(\frac{x}{1+x},\,(1+x)t-x\right).
\]
Proposition~\ref{B-explicit} determines the coefficient polynomials of $B$,
so this identity determines those of $P$. Lemma~\ref{U-lemma} then computes
the coefficients of $W$, and Theorem~\ref{primitive-bivariate} gives those
of $\widehat S$.

The coefficients of $\widehat S$ begin
\begin{equation}\label{eq:prim-counts}
  (\hat s_n)_{n\geq1}
  =1,2,2,10,70,634,6742,82306,1126846,17050626,282097790,\ldots,
\end{equation}
agreeing with the number of associated structures $R(\omega)$ that are
indecomposable except at $n=2$, where $11$ is indecomposable but not primitive.
We have found neither the sequence~\eqref{eq:prim-counts} in
the OEIS~\cite{oeis} nor a closed form
more explicit than the generating-function identity in
Theorem~\ref{primitive-bivariate}.

The asymptotics of the numbers $s_n$ and $\hat s_n$ are obtained in the
companion paper~\cite{cerbai-claesson-hertzsprung}, where we show that
\[
  s_n\sim\frac{|\Cay_n|}{2\sqrt2}
    \sim\frac{n!}{4\sqrt2\,(\log 2)^{n+1}},
  \qquad
  \hat s_n\sim\frac{|\Cay_n|}{4}
    \sim\frac{n!}{8(\log 2)^{n+1}}.
\]

\subsection{Deletions and exceptional Cayley permutations}\label{deletions}

Recall that every simple permutation of length $n\geq2$ contains
a simple permutation of length $n-1$ or $n-2$, by Schmerl and
Trotter~\cite{SchTro93}. Those for which no one-point deletion is
simple are called exceptional.
Albert and Atkinson~\cite{AlbAtk05} showed that they are precisely
the simple parallel alternations and their symmetries.

We wish to prove that both statements carry over to Cayley
permutations. To start, we take care of Cayley permutations that are
not primitive by showing that removing a flat step from a simple
Cayley permutation preserves simplicity. On the other hand, by
Proposition~\ref{indecomposable} Cayley permutations that are simple
and primitive correspond to indecomposable structures, enabling
us to use Schmerl and Trotter's result.

Given $\omega=w_1\cdots w_n\in\Cay_n$ and a position $i\in [n]$, let
\[
  \omega\setminus i=\st(w_1\cdots w_{i-1}w_{i+1}\cdots w_n)
\]
denote the one-point deletion at $i$.

\begin{proposition}\label{flat-deletion}
  Let $\omega=w_1\cdots w_n\in\Cay_n$ be simple with $n\geq2$, and suppose $w_i=w_{i+1}$. Then
  $\omega\setminus(i+1)$ is a simple Cayley
  permutation of length $n-1$.
\end{proposition}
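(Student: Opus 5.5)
We argue by contraposition in the ballot language of Proposition~\ref{ballot} and Corollary~\ref{ballot-simple}. Write $\omega'=\omega\setminus(i+1)$. Since $w_i=w_{i+1}$, the value $w_i$ still occurs in $\omega'$ at position $i$. So deleting position $i+1$ removes no value. Hence $\omega'$ is simply $w_1\cdots w_i w_{i+2}\cdots w_n$, with no restandardization needed, and it lies in $\Cay_{n-1}$.

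Consider the map $\phi\colon[n-1]\to[n]$ given by $\phi(j)=j$ for $j\le i$ and $\phi(j)=j+1$ for $j>i$. In ballot terms, the $k$th block of $\omega'$ is $F'_k=\phi^{-1}(F_k)$, where $F_k$ is the $k$th block of $\omega$. Equivalently, $F_k=\phi(F'_k)$, except for $k=w_i$, where $F_{w_i}=\phi(F'_{w_i})\cup\{i+1\}$.

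The key step is to lift H-intervals. Suppose $I'=F'_c\cup\cdots\cup F'_d$ is an H-interval of $\omega'$. Put $I=F_c\cup\cdots\cup F_d$. We claim that $I$ is a positional interval of $[n]$, so that $I$ is an H-interval of $\omega$ by Proposition~\ref{ballot}. There are two cases.
\begin{itemize}
\item If $w_i\in[c,d]$, then $i\in I'$ and $I=\phi(I')\cup\{i+1\}$. The image $\phi(I')$ consists of an interval ending at $i$ together with a shifted interval starting at $i+2$ (either part may be empty beyond $i$). Adding $i+1$ fills the gap, so $I$ is an interval.
\item If $w_i\notin[c,d]$, then $i\notin I'$. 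So $I'$ lies entirely in $[1,i-1]$ or entirely in $[i+1,n-1]$. On each of these sets $\phi$ is a translation, so $I=\phi(I')$ is again an interval.
\end{itemize}
In both cases $|I'|\le|I|\le|I'|+1$.

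Now suppose, for a contradiction, that $\omega'$ is not simple. Then it has an H-interval $I'$ with $2\le|I'|\le n-2$. The lift $I$ then satisfies $2\le|I|\le n-1$, so it is a proper nonsingleton H-interval of $\omega$. This contradicts the simplicity of $\omega$. The degenerate case $n=2$ is $\omega=11$, where $\omega'=1$ is trivially simple.

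The only point needing care is the first case, where the deleted position must be reinserted to keep $I$ an interval. This is exactly where the hypothesis that the deleted letter is adjacent to an equal letter is used.
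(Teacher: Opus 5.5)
Your proof is correct and takes essentially the same route as the paper. Your lift $I=F_c\cup\cdots\cup F_d$ is exactly the preimage $q^{-1}(I')$ under the order-preserving surjection $q\colon[n]\to[n-1]$ that merges positions $i$ and $i+1$; the paper checks directly that this preimage is an H-interval of $\omega$, using the general fact that preimages of intervals under such a map are intervals in place of your two-case analysis.
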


\begin{proof}
  Let $\omega'=w_1\cdots w_iw_{i+2}\cdots w_n$ and consider the
  order-preserving surjection
  \[
    q\colon[n]\to[n-1],
    \qquad
    q(j)=
    \begin{cases}
      j   & \text{if $j\leq i$},\\
      j-1 & \text{if $j>i$}.
    \end{cases}
  \]
  Since $w_i=w_{i+1}$, we have $\omega(j)=\omega'(q(j))$ for every $j$.
  Thus $\omega$ and $\omega'$ use the same values, so $\omega'$ is already a Cayley
  permutation and $\omega\setminus(i+1)=\omega'$.

  Let $I'$ be an H-interval of $\omega'$ with value band $J$.
  Its preimage $I=q^{-1}(I')$ is a nonempty positional interval, since
  $q$ is order-preserving and surjective. Moreover,
  \[
    \omega(I)=\omega'(I')=J,
    \qquad
    \omega^{-1}(J)=q^{-1}\bigl((\omega')^{-1}(J)\bigr)=I.
  \]
  Hence $I$ is an H-interval of $\omega$, so simplicity makes it a singleton
  or $[n]$. Taking its image under $q$ makes $I'=q(I)$ a singleton or
  $[n-1]$, proving that $\omega'$ is simple.
\end{proof}

\begin{theorem}\label{deletion}
  Every simple Cayley permutation of length $n\geq2$ contains a
  simple Cayley permutation of length $n-1$ or $n-2$, obtained
  by deleting one or two letters and standardizing.
\end{theorem}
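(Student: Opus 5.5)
Let $\omega\in\Cay_n$ be simple with $n\geq2$. I will split into two cases according to whether $\omega$ is primitive, and transfer the Schmerl--Trotter theorem in the primitive case through Proposition~\ref{indecomposable}.

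\emph{Small lengths.} For $n=2$ the statement is trivial, since the one-letter Cayley permutation $1$ is simple. For $n=3$, deleting any single letter leaves a Cayley permutation of length two, and all of these are simple. So I may assume $n\geq4$ wherever convenient.

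\emph{Non-primitive case.} Suppose $\omega$ has a flat step $w_i=w_{i+1}$. Then Proposition~\ref{flat-deletion} gives directly that $\omega\setminus(i+1)$ is simple of length $n-1$.

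\emph{Primitive case with $n\geq3$.} Here Proposition~\ref{indecomposable} says $R(\omega)$ is an indecomposable relational structure on $[n]$. I will invoke the Schmerl--Trotter theorem: every indecomposable structure with at least $7$ elements has an indecomposable substructure with $n-1$ or $n-2$ elements; more precisely, for $n\geq 7$ it has one obtained by deleting one or two points. For smaller $n$ the result as usually stated needs care; the $3\le n\le 6$ cases I would check by hand, or by consulting the list of simple Cayley permutations for $n\leq 6$ if needed. The main step is to identify induced substructures with deletions. Deleting a set $D$ of positions from $R(\omega)$ and relabelling yields, up to isomorphism, $R(\omega')$, where $\omega'$ is the standardized subword on the remaining positions. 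This holds because $<_1$ restricts to the position order and $<_2$ restricts to the value comparisons, which are preserved by standardization. So $R(\omega')$ is indecomposable of size $n-1$ or $n-2$.

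\emph{Back to Cayley permutations.} If $|\omega'|\geq3$, Proposition~\ref{indecomposable} gives that $\omega'$ is simple (and primitive). If $|\omega'|\leq2$, then $\omega'$ is automatically simple. In either case $\omega'$ is obtained from $\omega$ by deleting one or two letters and standardizing.

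\emph{Main obstacle.} The delicate point is quoting the Schmerl--Trotter theorem in exactly the form needed. Their statement is for indecomposable structures of size at least $7$ and gives an indecomposable substructure of size $n-1$ or $n-2$; it applies to arbitrary finite relational structures with binary relations, so $R(\omega)$ with two relations is allowed. I would state the size hypothesis carefully and dispose of $3\leq n\leq 6$ primitive simple cases by a direct check. For these, the substructure statement is in fact usually given for all $n\geq 3$, reducing to trivialities for small sizes: any two-element structure counts as indecomposable, which matches the fact that every length-two Cayley permutation is simple.
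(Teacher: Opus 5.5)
Your argument is the paper's proof: flat-step deletion via Proposition~\ref{flat-deletion} when $\omega$ is not primitive, and otherwise Proposition~\ref{indecomposable}, Schmerl--Trotter, and the isomorphism between the induced substructure on $S$ and $R(\st(\omega|_S))$. Your hedging on sizes is unnecessary, since the paper settles $n\le 4$ by keeping any two positions and, for $n\ge 5$, cites Schmerl--Trotter's Corollary~2.3, which directly gives an indecomposable substructure on $n-1$ or $n-2$ points; but if you only had the $|A|\ge 7$ two-point-deletion theorem, the lengths $n=5,6$ would be a genuine unverified gap, not a formality.
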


\begin{proof}
  Let $\omega\in\Cay_n$ be simple.  For $n=2$, either deletion leaves $1$; for
  $n=3,4$, keeping any two positions and standardizing leaves a Cayley
  permutation of length two, which is simple.  We may therefore assume that
  $n\geq5$.  If $\omega$ is not primitive, it has a flat step, and
  Proposition~\ref{flat-deletion} supplies a simple Cayley permutation of
  length $n-1$.  Suppose then that $\omega$ is primitive.  By
  Proposition~\ref{indecomposable}, $R(\omega)$ is indecomposable.  Since its
  relations are binary and irreflexive, a result of
  Schmerl and Trotter~\cite[Corollary~2.3]{SchTro93} shows that $R(\omega)$ has an
  indecomposable induced substructure on $n-1$ or $n-2$ points.
  Standardization preserves both the position order and the value order, so the
  substructure of $R(\omega)$ induced on a set $S$ of positions is isomorphic to
  $R(\st(\omega|_S))$; take $S$ with $R(\st(\omega|_S))$ indecomposable and
  $|S|\in\{n-1,n-2\}$.  Since $|S|\geq3$,
  Proposition~\ref{indecomposable} makes $\st(\omega|_S)$ simple.
\end{proof}

For $n\ge 2$, we call a simple Cayley permutation $\omega\in\Cay_n$
\emph{exceptional} if $\omega\setminus i$ is not simple for each
$i\in[n]$. The next result is a corollary of
Proposition~\ref{flat-deletion}.

\begin{corollary}\label{exceptional-primitive}
  An exceptional Cayley permutation is primitive.
\end{corollary}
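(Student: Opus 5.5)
The plan is to argue by contraposition using Proposition~\ref{flat-deletion}. Let $\omega\in\Cay_n$ be exceptional, so by definition $\omega$ is simple, $n\geq2$, and $\omega\setminus i$ fails to be simple for every $i\in[n]$. Suppose, for a contradiction, that $\omega$ is not primitive. Then there is a flat step, namely a position $i$ with $w_i=w_{i+1}$ and $1\leq i<n$.

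Since $\omega$ is simple, has length $n\geq2$, and satisfies $w_i=w_{i+1}$, Proposition~\ref{flat-deletion} applies directly. It shows that $\omega\setminus(i+1)$ is a simple Cayley permutation of length $n-1$. But $i+1\in[n]$, so this is a one-point deletion of $\omega$ that is simple, contradicting exceptionality. Hence $\omega$ has no flat steps, that is, $\omega$ is primitive.

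I expect no real obstacle. The only point to check is that the hypotheses of Proposition~\ref{flat-deletion} are exactly those available, namely simplicity and $n\geq2$, both of which are built into the definition of exceptional. One should also note that for $n=2$ the deletion gives the permutation $1$, which is simple, so $11$ is indeed not exceptional; this is consistent with the argument and needs no separate treatment.
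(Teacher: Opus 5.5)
Your proposal is correct and is the same argument the paper intends: the paper states this result simply as a corollary of Proposition~\ref{flat-deletion}. A flat step in a simple $\omega$ would give a simple one-point deletion $\omega\setminus(i+1)$, contradicting exceptionality, and the hypotheses of that proposition are exactly those built into the definition of exceptional.
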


Proving that the parallel alternations are the only exceptional
Cayley permutations requires a bit more work.
Following Schmerl and Trotter~\cite{SchTro93}, we say that two
ordered pairs $(a,b)$ and $(c,d)$ of distinct elements of a
relational structure have the same \emph{type} if
$aR_ib\Leftrightarrow cR_id$ and $bR_ia\Leftrightarrow dR_ic$
for every relation $R_i$.
The equivalence classes of ordered pairs induced by the type
relation form the \emph{type skeleton}.

As an example, consider the Cayley permutation $\omega=231146561$.
In the type skeleton of $R(\omega)$, the type of the arc $i\to j$
records the comparison of the positions $i,j$ and the comparison
of the values $\omega(i),\omega(j)$. There are six possible types:
\begin{align*}
A=(<,<),\qquad B=(<,=),\qquad C=(<,>),\\[0.25em]
D=(>,<),\qquad E=(>,=),\qquad F=(>,>),
\end{align*}
where the first sign compares $i$ with $j$ and the second compares
$\omega(i)$ with $\omega(j)$. The following matrix displays the type skeleton
of $R(\omega)$. Its rows and columns are labelled $i_{\omega(i)}$, and the
entry in row $i_{\omega(i)}$ and column $j_{\omega(j)}$ is the type of the
ordered pair $(i,j)$:
\[
  \begin{array}{c|*{9}{c}}
     &1_2&2_3&3_1&4_1&5_4&6_6&7_5&8_6&9_1\\
    \hline
    1_2&-&A&C&C&A&A&A&A&C\\
    2_3&F&-&C&C&A&A&A&A&C\\
    3_1&D&D&-&B&A&A&A&A&B\\
    4_1&D&D&E&-&A&A&A&A&B\\
    5_4&F&F&F&F&-&A&A&A&C\\
    6_6&F&F&F&F&F&-&C&B&C\\
    7_5&F&F&F&F&F&D&-&A&C\\
    8_6&F&F&F&F&F&E&F&-&C\\
    9_1&D&D&E&E&D&D&D&D&-
  \end{array}
\]
For instance, the type of $(1,2)$ is $A=(<,<)$ since we have
positions $1<2$ and values $\omega(1)<\omega(2)$.
In the matrix, two ordered pairs have the same type exactly when
their cells carry the same letter, and the type skeleton is the
partition of the off-diagonal cells into the six letter classes.
Note also that the matrix makes the modules transparent: a set is
a module when its rows agree outside its own columns.
For instance, the set $\{1,2\}$ is a module since rows $1_2,2_3$
agree outside columns $1_2,2_3$; it is also an H-interval,
occupying consecutive positions and the saturated value band
$\{2,3\}$. Similarly, $[5,8]$ is a module as well as an H-interval
with saturated value band $\{4,5,6\}$. These are the maximal proper
H-intervals. The set $\{3,4\}$ is another module: it is a constant
positional interval, but not an H-interval since the value $1$
occurs again at position $9$ (hence saturation fails).
The H-interval decomposition of $\omega$ is therefore
\[
  23\mid1\mid1\mid4656\mid1,
\]
which by contraction gives the simple quotient $21131$. Conversely,
$\omega$ is obtained as the inflation
\[
  231146561=21131[12,1,1,1323,1].
\]

Now, in the language of Schmerl and Trotter, an indecomposable
structure is \emph{critically indecomposable} if no one-point
deletion of the structure is indecomposable. The two
authors~\cite[Theorem~5.1]{SchTro93} have fully classified the
type skeletons of critically indecomposable structures.
We will only need to count their types to show that no Cayley
permutations except for the parallel alternations are
exceptional.

\begin{lemma}\label{four-types}
  A critically indecomposable type skeleton has at most four types.
\end{lemma}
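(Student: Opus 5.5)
The plan is to read the statement directly off Schmerl and Trotter's classification \cite[Theorem~5.1]{SchTro93} of the type skeletons of critically indecomposable structures. We will not reprove that classification. Instead, we go through the families it lists and count, in each, the distinct type classes of ordered pairs. The classification says that every critically indecomposable structure has, up to isomorphism of type skeletons, one of a short list of forms. These are built on ground sets split as $\{a_1,\ldots,a_m\}\cup\{b_1,\ldots,b_m\}$, with the type of a pair $(x,y)$ depending only on the following data:
\begin{enumerate}
\item whether $x$ and $y$ lie in the same half;
\item if they do, whether the index of $x$ is smaller or larger than that of $y$;
\item if they do not, whether the $a$-index is at most or strictly greater than the $b$-index (or a similar threshold comparison, as in the graph-like and tournament-like families).
\end{enumerate}

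The key steps are as follows. First, we recall that the type of the reversed pair $(y,x)$ is determined by the type of $(x,y)$. This is because the definition of type records both $xR_iy$ and $yR_ix$. Hence it suffices to count the types of pairs $(x,y)$ up to reversal, and then double where a type is not self-reverse.

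Second, we go through the families of \cite[Theorem~5.1]{SchTro93}. In each family, the type of a pair within the $a$-half is given by one rule according to index order, and likewise within the $b$-half. A mixed pair $(a_i,b_j)$ has one of two types according to a threshold comparison between $i$ and $j$.

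For the families in which the within-half pairs are symmetric (the graph-like ones), this yields at most two symmetric within-half types. Together with the two mixed types and their reverses, the total is at most four. For the families in which the within-half pairs are oriented (the linear-order-like ones), each half carries one type and its reverse. The skeleton forces the two halves to use the same pair of types, and the mixed pairs reuse these same types in the appropriate direction. Again the total is at most four.

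The main obstacle is the bookkeeping. The families in Schmerl and Trotter are stated with specific identifications of types across halves, and the bound of four relies on those identifications rather than on counting naively, which would give up to six. I would handle this by writing out, for each family, the $2m\times 2m$ type matrix in the style of the example above, in block form. The four blocks are $aa$, $ab$, $ba$ and $bb$, and each block is determined by a ``diagonal-split'' rule, from which the distinct letters can be read off. If some family turned out to exhibit more than four types in isolation, I would check whether it is excluded from the binary irreflexive setting used here; if it is, the lemma would be stated for that setting only, which is all that is needed for $R(\omega)$.
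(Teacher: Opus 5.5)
Your overall strategy is the paper's: invoke Schmerl and Trotter's classification and count the types family by family. However, the count is the entire content of the lemma, and you do not actually carry it out. The reasoning you sketch for it is also not correct.

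The generic template you impose on the nine structures $\mathcal G_r,\mathcal P_r,\mathcal P'_r,\mathcal B_r,\mathcal T^{(1)}_r,\mathcal T^{(2)}_r,\mathcal T^{(3)}_r,\mathcal D_r,\mathcal D'_r$ (two halves, index order, a threshold for mixed pairs) is asserted rather than checked. Where it can be checked, the identifications you draw from it fail:
\begin{itemize}
\item In $\mathcal B_r=(V_r;P_r,P'_r)$, the pairs $(a_i,b_j)$ with $i\geq j$ lie in $P_r$, while every within-half pair lies in $P'_r$. So the mixed pairs do not ``reuse'' the within-half types. Your oriented-case description would give two types for $\mathcal B_r$, whereas it has four.
\item In your graph-like case, ``two within-half types plus two mixed types and their reverses'' only yields four if you prove coincidences (for example, that the mixed types are self-reverse), and you do not.
\item Deferring everything to writing out type matrices leaves the lemma unproved.
\item Your fallback of restricting the lemma to the binary irreflexive setting buys nothing. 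That is already the setting of the classification and of $R(\omega)$, and the application in Theorem~\ref{exceptional} needs the bound for every listed skeleton.
\end{itemize}

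The missing observation, which makes the count immediate and is what the paper uses, is this: the type of an ordered pair $(x,y)$ is determined by the vector of truth values $(xR_iy,\,yR_ix)_i$. So only the number and kind of relations matter, not the index bookkeeping.
\begin{itemize}
\item Seven of the nine structures carry a single relation, so they have at most $2^2=4$ types outright. In fact the graph and the tournaments have two, and the posets and $\mathcal D_r$ have three.
\item The other two, $\mathcal B_r$ and $\mathcal D'_r$, carry two relations. Every pair of distinct points is related by exactly one of them, in exactly one direction, which leaves exactly four possible vectors.
\end{itemize}
Since an isomorphism of type skeletons preserves the number of types, the bound then passes to every critically indecomposable skeleton. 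Without this observation, or a complete explicit check of all nine families, your proposal does not establish the lemma.
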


\begin{proof}
   The type skeleton of a critically indecomposable structure
   is, for some $r\geq2$, the type skeleton of one of the nine
   structures listed by Schmerl and Trotter~\cite[Section~4]{SchTro93}:
\[
\mathcal G_r, \mathcal P_r, \mathcal P'_r,
\mathcal B_r, \mathcal T^{(1)}_r, \mathcal T^{(2)}_r,
\mathcal T^{(3)}_r, \mathcal D_r, \mathcal D'_r.
\]
  The graph $\mathcal G_r$ carries one symmetric relation, so a pair is
  adjacent or not: two types.  The tournaments $\mathcal T^{(i)}_r$ carry one
  relation holding in exactly one direction on each pair: two types.  The
  posets $\mathcal P_r$ and $\mathcal P'_r$ and the oriented graph
  $\mathcal D_r$ carry one antisymmetric relation, so a pair is related one
  way, the other way, or not at all: three types.  In
  $\mathcal B_r=(V_r;P_r,P'_r)$ the two orders divide the pairs between them,
  $P_r$ taking the $(a_i,b_j)$ with $i\geq j$ and $P'_r$ the remaining
  $(a_i,b_j)$ together with the $(a_i,a_j)$ and the $(b_i,b_j)$; each pair is
  therefore related by exactly one of the two, in exactly one direction: four types.  The same holds in $\mathcal D'_r=(T^{(2)}_r;F_r,F'_r)$, where
  $F_r$ and $F'_r$ partition the arcs of the tournament $\mathcal T^{(2)}_r$.
  An isomorphism of type skeletons preserves the number of types, so the bound
  passes to every critically indecomposable type skeleton.
\end{proof}

\begin{lemma}\label{six-types}
  Let $\omega$ be a primitive Cayley permutation with a repeated value.  Then $R(\omega)$
  has six types.
\end{lemma}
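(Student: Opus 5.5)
We need to show that all six types $A,\dots,F$ actually occur among the ordered pairs $(i,j)$ of distinct positions of $R(\omega)$. The types come in reverse pairs: $(i,j)$ has type $A$ exactly when $(j,i)$ has type $F$, type $B$ exactly when $(j,i)$ has type $E$, and type $C$ exactly when $(j,i)$ has type $D$. So it suffices to exhibit one pair $i<j$ with $\omega(i)<\omega(j)$, one with $\omega(i)=\omega(j)$, and one with $\omega(i)>\omega(j)$; the reversed pairs then supply $F$, $E$, $D$.

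For type $B$ we use the hypothesis directly: some value $v$ occurs at two positions $i<j$, so $(i,j)$ has type $B$. For types $A$ and $C$, primitivity gives $\omega(i)\neq\omega(i+1)$. Moreover $j\geq i+2$, so there is at least one position $p$ with $i<p<j$.

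Now compare $\omega(p)$ with $v$. If $\omega(p)>v$, then $(i,p)$ has type $A$ and $(p,j)$ has type $C$. If $\omega(p)<v$, then $(i,p)$ has type $C$ and $(p,j)$ has type $A$. The case $\omega(p)=v$ needs more care, since $p$ may carry the same value. To avoid it, I choose $i<j$ to be two \emph{consecutive} occurrences of $v$, meaning no position strictly between them carries $v$. Primitivity still gives $j\geq i+2$. Then every $p$ with $i<p<j$ has $\omega(p)\neq v$, and one of the two cases above applies. This yields pairs of types $A$ and $C$, and by reversal also $F$ and $D$.

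Hence all six types occur, and $R(\omega)$ has six types. The only subtle step is the choice of consecutive occurrences of the repeated value, which guarantees a strictly intermediate position with a different value. No earlier result beyond the definitions is needed, and the statement is intended to combine with Lemma~\ref{four-types}: a critically indecomposable structure has at most four types, so an exceptional Cayley permutation (which is primitive by Corollary~\ref{exceptional-primitive}) has no repeated value and is therefore a permutation.
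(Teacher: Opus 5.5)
Your proof is correct. It differs from the paper's only in how you find the ascending and descending pairs.

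The paper argues globally and by contradiction. If no pair $i<j$ had $\omega(i)<\omega(j)$, then $\omega$ would be weakly decreasing, hence strictly decreasing by primitivity, which is impossible with a repeated value. The mirror argument gives a pair with $\omega(i)>\omega(j)$.

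You instead build the witnesses directly. You take two consecutive occurrences $i<j$ of the repeated value $v$, and primitivity forces an intermediate position $p$ with $\omega(p)\neq v$. The pairs $(i,p)$ and $(p,j)$ then realize types $A$ and $C$ at once.

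Your route is constructive and places all three value comparisons inside the window $[i,j]$. Its cost is the extra care needed to choose consecutive occurrences. The paper's route is shorter and handles both strict comparisons with one symmetric monotonicity observation.

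Both proofs use the same reversal step to obtain $D$, $E$, $F$. The paper also states explicitly that the six types are distinct. In your write-up this is implicit, since the six types differ in which of $<_1$ and $<_2$ hold in each direction.
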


\begin{proof}
  For $i<j$, the type of $(i,j)$ is determined by whether
  $\omega(i)<\omega(j)$, $\omega(i)=\omega(j)$, or $\omega(i)>\omega(j)$; reversing the ordered pair gives three
  corresponding types with the opposite position order. All three value
  comparisons occur. A repeated value supplies the equality case. If no pair
  satisfied $\omega(i)<\omega(j)$, then $\omega$ would be weakly decreasing and, by
  primitivity, strictly decreasing, contrary to the repeated value. The same
  argument with the order reversed supplies a pair with $\omega(i)>\omega(j)$. Thus all
  six types occur, and they are distinct.
\end{proof}

\begin{theorem}\label{exceptional}
  The exceptional Cayley permutations are exactly the simple parallel
  alternations and their symmetries.
\end{theorem}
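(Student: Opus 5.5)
The argument reduces exceptional Cayley permutations to exceptional permutations and then quotes the classification of Albert and Atkinson~\cite{AlbAtk05}.

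Let $\omega\in\Cay_n$ be exceptional.

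\textbf{Step 1: $\omega$ is primitive and $R(\omega)$ is critically indecomposable.} By Corollary~\ref{exceptional-primitive}, $\omega$ is primitive. For $n\geq 4$ we handle small lengths by inspection. Every Cayley permutation of length at most two is simple. So no simple Cayley permutation of length two or three is exceptional: any one-point deletion has length one or two and is therefore simple. We may thus take $n\geq4$.

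Proposition~\ref{indecomposable} then makes $R(\omega)$ indecomposable. Each one-point deletion $\omega\setminus i$ has length $n-1\geq3$ and is not simple. Hence, by Proposition~\ref{indecomposable} again, $R(\omega\setminus i)$ is not indecomposable. As in the proof of Theorem~\ref{deletion}, this structure is isomorphic to the substructure of $R(\omega)$ induced on $[n]\setminus\{i\}$. So $R(\omega)$ is critically indecomposable.

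\textbf{Step 2: $\omega$ is a permutation.} By Lemma~\ref{four-types}, the type skeleton of $R(\omega)$ has at most four types. If $\omega$ had a repeated value, Lemma~\ref{six-types} would give six types, a contradiction. Therefore $\omega$ has no repeated value, i.e.\ $\omega$ is a permutation.

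\textbf{Step 3: apply the permutation classification.} For permutations, the definitions of simplicity, one-point deletion and standardization restrict to the classical ones. So $\omega$ is an exceptional permutation in the sense of Albert and Atkinson~\cite{AlbAtk05}. By their theorem, $\omega$ is a simple parallel alternation or a symmetry of one.

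\textbf{Converse.} Let $\omega$ be a simple parallel alternation, or a symmetry of one. It is an exceptional permutation, so every one-point deletion of it is a permutation that is not simple. A permutation is simple as a Cayley permutation exactly when it is simple as a permutation. Hence $\omega$ is exceptional as a Cayley permutation.

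\textbf{Main obstacle.} The delicate point is Step 1's identification of the deletion $\omega\setminus i$ with the induced substructure of $R(\omega)$. We need that the structure of the standardized word, carrying both the position order and the value order, is isomorphic to the restriction of $R(\omega)$. This holds because deleting a position and relabeling is order-preserving in both coordinates.

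A second point to check is the interaction with the $n\geq3$ hypothesis in Proposition~\ref{indecomposable}. It is satisfied because we reduced to $n\geq4$, so every deletion has length at least three.

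Finally, the term "symmetries" must mean the symmetries of permutations, namely reverse, complement and inverse. The first two preserve simplicity of Cayley permutations in general. Here, however, we only apply symmetries to permutations, so inverse causes no difficulty.
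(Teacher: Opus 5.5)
Your proposal is correct and matches the paper's proof step for step. Both reduce to $n\geq4$ and use Corollary~\ref{exceptional-primitive} and Proposition~\ref{indecomposable} to make $R(\omega)$ critically indecomposable. Both then play Lemma~\ref{four-types} against Lemma~\ref{six-types} to rule out repeated values, invoke the classical classification, and handle the converse via the agreement of the two notions of simplicity on permutations. The only difference is cosmetic: you establish critical indecomposability before assuming a repeated value, whereas the paper does so under that assumption.
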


\begin{proof}
  Let $\omega\in\Cay_n$ be exceptional.  Since every Cayley permutation of length at
  most two is simple, every one-point deletion of a word of length two or three
  is simple. Thus $n\geq4$, and $\omega$ is primitive by
  Corollary~\ref{exceptional-primitive}.  Suppose $\omega$ had a
  repeated value.  By
  Proposition~\ref{indecomposable}, $R(\omega)$ is indecomposable, and it is
  critically indecomposable.  Indeed, for each $j\in[n]$, its one-point
  deletion at $j$ is isomorphic to $R(\omega\setminus j)$, which has $n-1\geq3$
  points.  By Proposition~\ref{indecomposable}, this structure is
  indecomposable only if $\omega\setminus j$ is simple, and no such deletion is
  simple.  Lemma~\ref{four-types} then allows $R(\omega)$ at most four types, while
  Lemma~\ref{six-types} gives it six.
  So $\omega$ has no repeated value.  Being a permutation, $\omega$ is exceptional in the
  classical sense, hence a simple parallel alternation or one of its
  symmetries.  Conversely, the simple parallel alternations and their
  symmetries are permutations with no simple one-point deletion, and so are
  exceptional here.
\end{proof}

\section{Restricted growth functions}\label{sec:rgf}

A Cayley permutation $\omega=w_1\cdots w_n$ is a \emph{restricted growth
function}, or RGF, if $w_1=1$ and $w_{i+1}\leq\max(w_1,\ldots,w_i)+1$
for $1\leq i<n$. Equivalently, in the ballot of $\omega$ the blocks
appear in increasing order of their minima. Thus the bijection between Cayley
permutations and ballots restricts to a bijection between RGFs and set
partitions: a set partition is encoded by ordering its blocks by increasing
minima.
We inherit the definition of simplicity from Cayley permutations.  The
enumeration of simple RGFs requires only minor changes: the blocks are
unordered, and no RGF is a nontrivial skew sum, since $\omega(1)=1$.

There is also an intrinsic version of the relational structure of
Section~\ref{sec:modules} for set partitions.  If $\rho$ is a set partition of
$[n]$, let
\[
  R(\rho)=\bigl([n],<,\mathord\sim_\rho\bigr),
\]
where $<$ is the natural order and $i\sim_\rho j$ if $i$ and $j$ belong to the
same block of $\rho$.

\begin{proposition}\label{rgf-modules}
  Let $\omega$ be an RGF and $\rho$ its corresponding set partition. A nonempty
  positional interval is an H-interval of $\omega$ if and only if it is a union
  of blocks of $\rho$. Moreover, $R(\rho)$ and $R(\omega)$ have the same modules.
  Consequently, if $|\omega|\geq3$, then
  $R(\rho)$ is indecomposable if and only if $\omega$ is simple and primitive.
\end{proposition}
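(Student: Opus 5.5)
The plan has three steps, taken in the order of the statement.

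\emph{Step 1: H-intervals of an RGF.} Let $\omega$ be an RGF with ballot $F_1F_2\cdots F_m$. Its blocks are the blocks of $\rho$, listed by increasing minima. By Proposition~\ref{ballot}, an H-interval is a positional interval of the form $F_c\cup\cdots\cup F_d$, and any such set is in particular a union of blocks of $\rho$.

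For the converse, let $I=[a,b]$ be a nonempty positional interval that is a union of blocks of $\rho$. We must show these blocks are consecutive in the ballot. Suppose a block $F_j\not\subseteq I$ has $\min F_c<\min F_j<\min F_d$ for blocks $F_c,F_d\subseteq I$. Then $\min F_j$ lies between two elements of $I$, so it lies in the interval $I$. Since $I$ is a union of whole blocks, this forces $F_j\subseteq I$, a contradiction. Hence the blocks contained in $I$ are exactly the blocks whose minima lie in $[a,b]$, and these have consecutive indices because blocks are indexed by increasing minima. Proposition~\ref{ballot} now shows that $I$ is an H-interval.

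\emph{Step 2: the modules of $R(\rho)$.} We show they coincide with the modules of $R(\omega)$, as described in Lemma~\ref{modules}.

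Take a nonempty module $M$ of $R(\rho)$. The order $<$ forces $M$ to be a positional interval, by the same argument as in Lemma~\ref{modules}. There are two cases.
\begin{itemize}
\item If $M$ lies inside a single block, it is a constant positional interval of $\omega$, hence a module of $R(\omega)$.
\item Otherwise $M$ meets two different blocks. Pick $i,j\in M$ in different blocks, and take any $x\notin M$ in the block of $i$. The module condition for $\sim_\rho$ requires $x\sim_\rho i\Leftrightarrow x\sim_\rho j$, but $x\sim_\rho i$ holds and $x\sim_\rho j$ fails. So no such $x$ exists, and every block meeting $M$ is contained in $M$. Thus $M$ is a union of blocks, and Step 1 makes it an H-interval.
\end{itemize}

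Conversely, we check that each H-interval and each constant positional interval is a module of $R(\rho)$. The order $<$ is fine because both are positional intervals. For $\sim_\rho$: an H-interval is a union of blocks, so no element outside it is related to anything inside it; a constant interval lies in a single block, so every element outside it is related either to all of its elements or to none. Finally, the empty set is a module in both structures.

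\emph{Step 3: indecomposability.} Since $R(\rho)$ and $R(\omega)$ have the same modules, $R(\rho)$ is indecomposable exactly when $R(\omega)$ is. Proposition~\ref{indecomposable} then gives the equivalence with $\omega$ being simple and primitive for $|\omega|\geq3$.

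The main obstacle is Step 1, namely showing that a union of blocks forming a positional interval is automatically a run of consecutive blocks in the ballot. This is exactly where the RGF hypothesis, blocks ordered by their minima, is used; for general Cayley permutations the statement fails.
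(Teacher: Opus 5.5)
Your proposal is correct and follows essentially the same route as the paper's proof. The shared steps are the block-minimum argument showing that a positional interval which is a union of blocks is a run of consecutive ballot blocks, the case split on whether a module of $R(\rho)$ meets one block or several, and the reduction to Lemma~\ref{modules} and Proposition~\ref{indecomposable}.
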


\begin{proof}
  Let $I$ be a nonempty positional interval that is a union of blocks.
  If blocks $A,B,C$ satisfy $A,C\subseteq I$ and $\min A<\min B<\min C$,
  then $\min B\in I$, hence $B\subseteq I$. The blocks in $I$ are consecutive in their
  order by minima, so $I$ is an H-interval by Proposition~\ref{ballot}.
  The converse follows from the same proposition.

  The natural order forces every nonempty module $M$ of $R(\rho)$ to be
  a positional interval. If $M$ meets at least two blocks, the relation
  $\sim_\rho$ forces every block meeting $M$ to be contained in $M$, so the first
  assertion makes $M$ an H-interval. Otherwise $\omega$ is constant on $M$.
  Conversely, both kinds of interval are modules of $R(\rho)$. By
  Lemma~\ref{modules}, these are exactly the nonempty modules of $R(\omega)$;
  the empty set is a module of both structures. The final assertion now
  follows from Proposition~\ref{indecomposable}.
\end{proof}

\begin{lemma}\label{rgf-inflation}
  Assuming $\sigma\in\Cay_k$ and
  $\alpha_1,\ldots,\alpha_k\in\Cay$ are nonempty,
  an inflation $\sigma[\alpha_1,\ldots,\alpha_k]$ is an RGF if and
  only if $\sigma,\alpha_1,\ldots,\alpha_k$ are all RGFs.
\end{lemma}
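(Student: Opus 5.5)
We plan to work with the characterization of RGFs as Cayley permutations in which the first occurrences of the values $1,2,\ldots,\max(\omega)$ appear in increasing order of position. Equivalently, in the ballot $F_1\cdots F_m$ we have $\min F_1<\min F_2<\cdots<\min F_m$. Write $\omega=\sigma[\alpha_1,\ldots,\alpha_k]$ with parts $C_1,\ldots,C_k$ and value bands $J_1,\ldots,J_{\max(\sigma)}$ as in the proof of Lemma~\ref{project}, so that $\omega(C_i)=J_{\sigma(i)}$ and every band lies entirely below the next.

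The first step is to show that the first occurrence of a value of $\omega$ inside band $J_j$ always lies in the part $C_i$, where $i$ is the first position of $\sigma$ with $\sigma(i)=j$. If $j$ is repeated in $\sigma$, every corresponding part is a singleton carrying the single value of $J_j$, so this is immediate. If $j$ occurs once, then all values of $J_j$ occur only in $C_i$, which is an H-interval, and within $C_i$ they occur in the pattern $\alpha_i$ shifted by a constant.

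We then prove the ``if'' direction. Take two values $v<v'$ of $\omega$. If they lie in the same band $J_j$, then $j$ occurs once in $\sigma$ (otherwise the band is a single value), and both first occurrences lie in the single part $C_i$. Their relative order is therefore the order of the first occurrences of the corresponding values in $\alpha_i$, which is increasing because $\alpha_i$ is an RGF. If instead $v\in J_j$ and $v'\in J_{j'}$ with $j<j'$, then their first occurrences lie in the parts of the first occurrences of $j$ and $j'$ in $\sigma$. Since $\sigma$ is an RGF, the part for $j$ comes earlier, and parts are consecutive positional intervals. Hence the first occurrence of $v$ precedes that of $v'$.

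For the ``only if'' direction we run the same two cases in reverse. Values $j<j'$ of $\sigma$ take the minima $\min J_j<\min J_{j'}$ of their bands. Their first occurrences in $\omega$ lie in the parts of the first occurrences of $j$ and $j'$ in $\sigma$, and since the parts are disjoint consecutive intervals, the positional order transfers back to $\sigma$. For a component $\alpha_i$ with at least two letters, $\sigma(i)$ occurs once, and first occurrences of values of $J_{\sigma(i)}$ within $\omega$ coincide with those within $C_i$. So the RGF order transfers to $\st(\omega|_{C_i})=\alpha_i$. Components equal to $1$ are trivially RGFs.

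The only delicate point is the first step, where the restriction~\eqref{eq:repeat-restriction} and the saturation of nonsingleton parts are needed to ensure that a band's values live inside one part. Everything else is bookkeeping of positions.
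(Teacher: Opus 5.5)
Your proof is correct and follows essentially the same route as the paper. Both use the first-occurrence characterization of RGFs, split into conditions within a value band and between bands, and rest on two facts: the values of the $j$th band first occur in the part indexed by $\min\sigma^{-1}(j)$, and the parts are disjoint intervals in index order. The one difference is cosmetic: because you locate first occurrences only up to the correct part, you never need the paper's remark that each $\alpha_i$ begins with $1$, so your between-band step does not depend on the within-band conditions.
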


\begin{proof}
  A Cayley permutation is an RGF precisely when, for each value $v$ below its
  maximum, the first occurrence of $v$ precedes the first occurrence of $v+1$.
  In $\omega=\sigma[\alpha_1,\ldots,\alpha_k]$ the values split into consecutive
  bands indexed, in increasing order, by the values of $\sigma$.  Thus the RGF
  condition splits into conditions within the bands and between consecutive
  bands.  The band indexed by $j$ is a single value if the fibre
  $\sigma^{-1}(j)$ is not a singleton, in which case the corresponding
  components are forced to be $1$.  If $\sigma^{-1}(j)=\{i\}$, the values in
  the band first occur in the order in which they first occur in $\alpha_i$.
  The conditions within the bands therefore say precisely that every
  $\alpha_i$ is an RGF.  Suppose they hold.  For either kind of fibre,
  the largest value of the $j$th band first occurs in the part indexed by
  $\min\sigma^{-1}(j)$.  Since every $\alpha_i$ begins with $1$, the smallest
  value of the $(j+1)$st band first occurs at the first position of the part
  indexed by $\min\sigma^{-1}(j+1)$.  The parts are disjoint intervals occurring
  in index order, so the conditions between consecutive bands say precisely that
  $\min\sigma^{-1}(j)<\min\sigma^{-1}(j+1)$ for every $j$, which is to say that
  $\sigma$ is an RGF.
\end{proof}

\begin{theorem}\label{rgf-enumeration}
  Let $B_n^{\rgf}(t)=\sum_{\omega}t^{\mu(\omega)}$, the sum being over the RGFs $\omega$ of
  length $n$, let $B^{\rgf}(x,t)=\sum_{n\geq1}B_n^{\rgf}(t)x^n$, and let
  $S_{\geq3}^{\rgf}$ and $S^{\rgf}$ be defined as $S_{\geq3}$ and $S$ with
  ``simple'' replaced by ``simple RGF''.  Then
  \[
    1+\sum_{n\geq1}B_n^{\rgf}(t)\frac{z^n}{n!}
    =\exp\bigl(tz+e^z-1-z\bigr),
  \]
  and, for $n\geq1$,
  \[
    B_n^{\rgf}(t)
    =\sum_{k=1}^{n}\sum_{j=0}^{k}
     \binom{n}{j}\stirling{n-j}{k-j}(t-1)^j.
  \]
  Moreover, for $n\geq1$ and $0\leq j\leq n$,
  \[
    [t^j]\,B_n^{\rgf}(t)
    =\binom{n}{j}\sum_{i\geq0}\stirling{n-j}{i}_{\!\geq2}.
  \]
  Furthermore,
  \begin{align*}
    S_{\geq3}^{\rgf}\bigl(x,B^{\rgf}(x,t)\bigr)
    &= B^{\rgf}(x,t)-tx-x^2
      -\frac{B^{\rgf}(x,t)^2}{1+B^{\rgf}(x,t)},
    \intertext{and, letting $U^{\rgf}(x)$ be the unique formal power series with
      $B^{\rgf}\bigl(x,U^{\rgf}(x)\bigr)=x$,}
    S^{\rgf}(x)
    &= 2x+\frac{x^3}{1+x}-x\mskip1mu U^{\rgf}(x).
  \end{align*}
\end{theorem}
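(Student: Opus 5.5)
The plan is to mirror the proofs of Propositions~\ref{B-egf} and~\ref{B-explicit} and then apply the general machinery of Section~\ref{enumeration} to the property ``is an RGF''.

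\emph{Generating functions.} Via the restricted bijection, RGFs of length $n$ correspond to set partitions of $[n]$, and $\mu(\omega)$ counts singleton blocks. Take a \emph{set} of nonempty blocks, each weighted $tz$ if it is a singleton and counted by $e^z-1-z$ otherwise. This gives the exponential formula
\[
\exp\bigl(tz+e^z-1-z\bigr)=\sum_{k\geq0}\frac{\bigl(e^z-1+(t-1)z\bigr)^k}{k!}.
\]
The $1/k!$ removes the factor $k!$ in the expansion of Proposition~\ref{B-explicit}, giving the first formula. For the second, the coefficient of $t^j$ is $\frac{z^j}{j!}\exp(e^z-1-z)$. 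Equivalently, combinatorially: choose the $j$ singleton elements in $\binom nj$ ways and partition the rest into $i$ blocks of size at least two, now without ordering the blocks.

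\emph{Functional equation.} Take $Q$ to be ``is an RGF''. Then $Q(1)$ holds, and Lemma~\ref{rgf-inflation} is exactly condition~\eqref{eq:inflation-compatible}, so Theorem~\ref{enumeration-compatible} applies with $F_Q=B^{\rgf}$ and $H_Q=S^{\rgf}_{\geq3}$. For the indicators, $11$ and $12$ are RGFs, giving $\chi_{11}=\chi_{12}=1$, while $21$ is not, giving $\chi_{21}=0$. Substituting yields the displayed identity for $S^{\rgf}_{\geq3}$. Since $B^{\rgf}_1(t)=t$ and all $B^{\rgf}_k(t)\in\mathbb Z[t]$, Lemma~\ref{U-lemma} supplies $U^{\rgf}$. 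Corollary~\ref{univariate-compatible} with $\chi_{12}+\chi_{21}=1$ then gives the formula for $S^{\rgf}$.

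\emph{Main obstacle.} The only subtle point is making sure Theorem~\ref{enumeration-compatible} really applies. In particular, the plus-indecomposable decomposition must stay inside the class, and this follows from~\eqref{eq:inflation-compatible} because $12$ is an RGF. No RGF has quotient $21$, which is why the term with $\chi_{21}$ drops out. Everything else is routine coefficient extraction.
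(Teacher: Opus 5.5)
Your proposal is correct and follows the paper's proof essentially step for step. You replace the sequence of blocks by a set, so the factors $k!$ and $(i+j)!$ disappear; you then apply Theorem~\ref{enumeration-compatible}, Lemma~\ref{U-lemma}, and Corollary~\ref{univariate-compatible} to the RGF property, justified by Lemma~\ref{rgf-inflation}, with $\chi_{11}=\chi_{12}=1$ and $\chi_{21}=0$.
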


\begin{proof}
  The blocks of a set partition are unordered, so the sequence of blocks in the
  proof of Proposition~\ref{B-egf} is replaced by a set of blocks and $1/(1-u)$
  becomes $e^u$; in the two counts of Proposition~\ref{B-explicit} the same
  change removes the factors $(i+j)!$ and $k!$.

  Lemma~\ref{rgf-inflation} verifies~\eqref{eq:inflation-compatible} for the
  RGF property, which holds for $1$, $11$, and $12$, but not for $21$. The
  remaining identities and the existence and uniqueness of $U^{\rgf}$ follow
  from Theorem~\ref{enumeration-compatible},
  Lemma~\ref{U-lemma}, and Corollary~\ref{univariate-compatible}, with
  $F_Q=B^{\rgf}$ and $V_Q=U^{\rgf}$.
\end{proof}

Writing $u_n^{\rgf}=[x^n]U^{\rgf}(x)$, the coefficients
$s_n^{\rgf}=[x^n]S^{\rgf}(x)$ satisfy
\[
  s_n^{\rgf}=-u_{n-1}^{\rgf}+(-1)^{n-1}
\]
for $n\geq3$, and the sequence $(s_n^{\rgf})_{n\geq1}$ begins
\[
  (s_n^{\rgf})_{n\geq1}
  =1,2,2,4,13,51,228,1129,6093,35351,218467,\ldots
\]
We could not find this sequence in the OEIS~\cite{oeis} either.

For primitive simple RGFs, we use the same method as for the coefficients
$\hat s_n$ in Theorem~\ref{primitive-bivariate}. Define
\[
  P^{\rgf}(x,t)
  =\sum_{\substack{\omega\in\Cay_+\\\omega\text{ primitive RGF}}}
     x^{|\omega|}t^{\mu(\omega)},
\]
and let $W^{\rgf}$ satisfy $P^{\rgf}(x,W^{\rgf}(x))=x$. Run expansion and
Corollary~\ref{univariate-compatible} give
\begin{align*}
  P^{\rgf}(x,t)
    &=B^{\rgf}\!\left(\frac{x}{1+x},\,(1+x)t-x\right),\\
  \widehat S^{\rgf}(x)
    &=2x+\frac{x^3}{1+x}-xW^{\rgf}(x).
\end{align*}
Thus, writing $\hat s_n^{\rgf}=[x^n]\widehat S^{\rgf}(x)$, we obtain
\[
  (\hat s_n^{\rgf})_{n\geq1}=1,1,1,1,5,16,69,316,1591,8614,\ldots
\]
These counts are related to OEIS A099947, the numbers $t_n$ of topologically
connected set partitions~\cite{oeis}. These partitions have no proper
subinterval that is a union of blocks; Beissinger~\cite{Bei85} calls them
irreducible, and Dykema~\cite{Dyk16} calls them
connected. By Proposition~\ref{rgf-modules},
for $n\geq2$ these partitions correspond to simple RGFs with no singleton
fibres. Hence their generating function is
\[
  T(x)=\sum_{n\geq1}t_nx^n=x+x^2+S_{\geq3}^{\rgf}(x,0).
\]
Combining run expansion with the deletion of marked singleton blocks gives
\[
  P^{\rgf}(x,t)
  =x+(1+x)B^{\rgf}\!\left(x,(1+x)(t-1)\right).
\]
Substituting $t=W^{\rgf}(x)$ in this identity and using
Theorem~\ref{rgf-enumeration} gives
\[
  T(x)=(1+x)\widehat S^{\rgf}(x)-x^2-x^3.
\]
Thus $t_n=\hat s_n^{\rgf}+\hat s_{n-1}^{\rgf}$ for $n\geq4$.
The asymptotics are established in the companion
paper~\cite{cerbai-claesson-hertzsprung}. Writing $|\mathrm{RGF}_n|$ for the
number of restricted growth functions of length $n$ (the $n$th Bell number),
we have
\[
  s_n^{\rgf}\sim|\mathrm{RGF}_n|,
  \qquad \hat s_n^{\rgf}\sim|\mathrm{RGF}_{n-1}|,
  \qquad \frac{\hat s_n^{\rgf}}{s_n^{\rgf}}\sim\frac{\log n}{n}.
\]
Thus almost every RGF is simple, and almost every primitive RGF is simple.

\section{Classes with finitely many simple members}\label{finite-simple-classes}

Albert and Atkinson~\cite{AlbAtk05} showed that a (hereditary)
permutation class containing only finitely many simple permutations
has a readily computable algebraic generating function.  Is the
analogous statement true for Cayley permutations? The answer turns out
to be yes, which we now detail.

An occurrence of the classical pattern $\tau\in\Cay_k$ in $\omega\in \Cay_n$
is a subsequence $\alpha$ of $\omega$ such that $\st(\alpha)=\tau$. We write $\tau\leq \omega$
to indicate that $\tau$ occurs in $\omega$. This makes $\Cay$ into a poset. A
(hereditary)\kern0.2em\emph{Cayley permutation class} is an order
ideal $\C$ of $\Cay$; that is, $\tau\leq \omega\in\C \Rightarrow \tau\in\C$.
We say that $\omega$ \emph{avoids} $\tau$ if $\tau\nleq \omega$. Further, for a set
$X\subseteq\Cay$, we let $\Av(X)$ denote the class of Cayley
permutations avoiding each member of $X$.
The \emph{basis} of a class $\C$ is the set $\mathcal B$ of
$\leq$-minimal Cayley permutations not in $\C$; thus
$\C=\Av(\mathcal B)$.

Theorem~\ref{deletion} supplies a finite certificate for the finiteness of
the set of simple members.

\begin{corollary}\label{two-lengths}
  Let $\C$ be a class of Cayley permutations with no simple member of
  length $n$ or of length $n+1$.  Then $\C$ has no simple member of
  length greater than $n$.
\end{corollary}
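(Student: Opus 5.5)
We argue by contradiction, using Theorem~\ref{deletion} together with the heredity of $\C$. Suppose $\C$ contains a simple member of length greater than $n$. Among all such members, choose one, $\omega$, of minimal length $N$. By hypothesis $N\neq n+1$, so $N\geq n+2$. In particular $N\geq 2$, which is all Theorem~\ref{deletion} requires.

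Applying Theorem~\ref{deletion} to $\omega$, we obtain a simple Cayley permutation $\omega'$ of length $N-1$ or $N-2$. It arises by deleting one or two letters of $\omega$ and standardizing, so it is the standardization of a subsequence of $\omega$. Hence $\omega'\leq\omega$ in the classical pattern order. Since $\C$ is an order ideal, $\omega'\in\C$.

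Now compare lengths. Because $N\geq n+2$, the length of $\omega'$ is at least $N-2\geq n$ and at most $N-1$. There are two cases.
\begin{itemize}
\item If $|\omega'|\geq n+2$, then $\omega'$ is a simple member of $\C$ of length greater than $n$ and strictly less than $N$, contradicting the minimality of $N$.
\item If $|\omega'|\in\{n,n+1\}$, this contradicts the hypothesis that $\C$ has no simple member of length $n$ or $n+1$.
\end{itemize}
(The case $|\omega'|=n+1$ is covered by either bullet.) In every case we reach a contradiction, so $\C$ has no simple member of length greater than $n$.

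The only point needing care is checking that the object produced by Theorem~\ref{deletion} is a classical pattern of $\omega$, so that heredity applies. This holds because deleting letters and standardizing is exactly the definition of classical containment. Everything else is bookkeeping on lengths; note that the argument needs the excluded lengths to be two consecutive values $n$ and $n+1$, precisely because a single application of Theorem~\ref{deletion} can drop the length by up to two.
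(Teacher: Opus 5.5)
Your proof is correct and follows essentially the same argument as the paper: choose a shortest simple member $\omega\in\C$ of length $\ell>n$, observe that $\ell\geq n+2$, use Theorem~\ref{deletion} and heredity to get a simple member of $\C$ of length $\ell-1$ or $\ell-2$, and contradict either the hypothesis or the minimality of $\ell$. One small wording point: your parenthetical says $|\omega'|=n+1$ is covered by either bullet, but the first bullet as written only treats lengths $\geq n+2$; minimality does in fact also exclude length $n+1$, so the argument is unaffected.
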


\begin{proof}
  Suppose, to the contrary, that $\C$ has a simple member of length
  greater than $n$, and choose one, $\omega$, of least length $\ell$.  By
  hypothesis, $\ell\geq n+2$.  By Theorem~\ref{deletion} there is a
  simple pattern $\nu\leq \omega$ of length $\ell-1$ or $\ell-2$. Note that
  $\nu\in\C$, because $\C$ is a class.  Thus $n\leq|\nu|<\ell$.  If $|\nu|$
  is $n$ or $n+1$, this contradicts the hypothesis.  If $|\nu|>n+1$, it
  contradicts the minimality of $\ell$.
\end{proof}

A class therefore has finitely many simple members if and only if two
consecutive lengths lack them. If membership in $\C$ is decidable, for
example from a finite basis, we can find all its simple members by searching
successive lengths and stopping when two consecutive lengths have none.
The search terminates whenever the set of simple members is finite; it
does not decide whether that set is finite.

\subsection{Substitution-closed classes}

A class is \emph{substitution-closed} if
$\sigma[\alpha_1,\ldots,\alpha_k]$ lies in the class whenever $\sigma$ and
the $\alpha_i$ do, subject to~\eqref{eq:repeat-restriction}.
Albert and Atkinson's~\cite{AlbAtk05} characterization of
substitution-closed permutation classes, also stated in
Brignall's~\cite{Bri10} survey, extends to Cayley permutations.
We first describe how a classical pattern can occur in an inflation.

\begin{lemma}\label{patterns-through}
  Let $\omega=\sigma[\alpha_1,\ldots,\alpha_k]$ be an inflation and $\beta$ a
  nonempty Cayley permutation.  Then $\beta\leq \omega$ if and only if
  \[
    \beta=\st(\sigma|_T)\bigl[\gamma_i:i\in T\bigr]
  \]
  for some $\emptyset \neq T\subseteq[k]$ and nonempty $\gamma_i\leq\alpha_i$ with
  $\gamma_i=1$ whenever $|\sigma^{-1}(\sigma(i))|>1$.
\end{lemma}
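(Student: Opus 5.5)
The plan is to reduce everything to a comparison principle. A nonempty word is determined up to standardization by its length and by the outcome ($<$, $=$, $>$) of comparing its letters pairwise. So to prove $\st(\omega|_S)=\beta$ it suffices to match pairwise comparisons between $\omega|_S$ and $\beta$.

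First I record how comparisons behave in $\omega=\sigma[\alpha_1,\ldots,\alpha_k]$ with parts $C_1,\ldots,C_k$. As in the proof of Lemma~\ref{project}, each value $j$ of $\sigma$ owns a value band $J_j$, the bands are ordered as the $j$'s, and $\omega(C_i)=J_{\sigma(i)}$. Take $p\in C_i$ and $q\in C_{i'}$.
\begin{itemize}
\item If $i=i'$, then $\omega(p)$ and $\omega(q)$ compare as the corresponding letters of $\alpha_i$.
\item If $i\neq i'$ and $\sigma(i)\neq\sigma(i')$, they compare as $\sigma(i)$ and $\sigma(i')$.
\item If $i\ne i'$ and $\sigma(i)=\sigma(i')$, then by \eqref{eq:repeat-restriction} both parts are single positions with the same one-value band, so $\omega(p)=\omega(q)$.
\end{itemize}
Summarizing: the comparison of letters in different parts is the comparison of the $\sigma$-values, and within a part it is the comparison inside $\alpha_i$.

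For the forward direction, let $S$ be an occurrence of $\beta$, i.e.\ $\st(\omega|_S)=\beta$. I put $T=\{i: S\cap C_i\neq\emptyset\}$, which is nonempty, and $\gamma_i=\st(\omega|_{S\cap C_i})$.
\begin{itemize}
\item Each $\gamma_i$ is nonempty and satisfies $\gamma_i\leq\alpha_i$, since $\omega|_{C_i}$ is order isomorphic to $\alpha_i$.
\item If $\sigma(i)$ is repeated in $\sigma$, then $|C_i|=1$, so $\gamma_i=1$.
\item The restriction \eqref{eq:repeat-restriction} holds for $\sigma'=\st(\sigma|_T)$: a value repeated in $\sigma'$ is repeated in $\sigma$, so the forced components are $1$.
\end{itemize}
Then I check that $\sigma'[\gamma_i:i\in T]$ is a legitimate inflation whose parts are the blocks $S\cap C_i$, read in index order. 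A nonsingleton $\gamma_i$ comes from an index $i$ whose value is unique in $\sigma$, hence unique in $\sigma'$. Its letters therefore form a contiguous block carrying a value band that no other block touches, so that block is an H-interval. Finally, the comparison summary shows that $\omega|_S$ and this inflation have identical pairwise comparisons. Hence $\beta=\sigma'[\gamma_i:i\in T]$.

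For the converse, given $T$ and $\gamma_i\leq\alpha_i$, I choose for each $i\in T$ a set $S_i\subseteq C_i$ with $\st(\omega|_{S_i})=\gamma_i$. When $\gamma_i=1$ with $\sigma(i)$ repeated, $C_i$ is a single position and I take $S_i=C_i$. Setting $S=\bigcup_{i\in T} S_i$, the same comparison argument gives $\st(\omega|_S)=\beta$, so $\beta\leq\omega$.

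The main obstacle is the bookkeeping that the word produced really is the inflation in the paper's sense, with nonsingleton factors forming saturated H-intervals. A value unique in $\sigma'$ may be repeated in $\sigma$, and I would resolve this via the hypothesis $\gamma_i=1$ for indices repeated in $\sigma$ (which also forces $|C_i|=1$ in the forward direction).
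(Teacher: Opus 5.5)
Your proposal is correct and follows essentially the same route as the paper: split an occurrence $S$ by parts, set $\gamma_i=\st(\omega|_{S\cap C_i})$, and match pairwise comparisons (within a part, between parts with distinct $\sigma$-values, and between single-position parts with equal $\sigma$-values). The converse is obtained, as in the paper, by taking a union of occurrences of the $\gamma_i$ in the parts; your explicit check that each nonsingleton block $S\cap C_i$ forms an H-interval of $\omega|_S$ is a detail the paper leaves implicit, but it does not change the argument.
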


\begin{proof}
  Choose positions $S$ witnessing an occurrence of $\beta$ in $\omega$, and let $S_i$ be those
  in the $i$th part of the inflation. Put
  $T=\{i:S_i\neq\emptyset\}$ and $\gamma_i=\st(\omega|_{S_i})$ for $i\in T$.
  Then $\gamma_i\leq\alpha_i$, and $\gamma_i=1$ whenever $\sigma(i)$ is
  repeated, since the corresponding part is a single position.
  Here $T$ is the set of indices whose corresponding part in the
  inflation contains at least one entry in the selected occurrence of
  $\beta$; and $\gamma_i$ is the standardization of all the entries in
  $S_i$ taking part in such occurrence.
  Now, restricting $\sigma$ to $T$ and inflating $\sigma|_T$
  by the $\gamma_i$ gives the same result as restricting $\omega$ to $S$.
  Indeed, within a part the selected letters have relative order $\gamma_i$.
  Between parts with distinct values of $\sigma$, they compare as those
  values do. Parts with equal values of $\sigma$ are single positions
  carrying equal letters. Hence
  \[
    \beta=\st(\omega|_S)=\st(\sigma|_T)[\gamma_i:i\in T].
  \]
  This is a valid inflation: every repeated letter of $\st(\sigma|_T)$
  comes from a repeated letter of $\sigma$ and is therefore inflated by $1$.

  Conversely, choose an occurrence of each $\gamma_i$ in the $i$th part.
  Their union has the displayed standardization, by the same comparison
  of values, and thus witnesses an occurrence of $\beta$ in $\omega$.
\end{proof}

\begin{lemma}\label{localization}
  Let $\omega=\sigma[\alpha_1,\ldots,\alpha_k]$ be an inflation and $\beta$
  a simple Cayley permutation with $\beta\leq \omega$.  Then
  $\beta\leq\sigma$ or $\beta\leq\alpha_i$ for some $i$.
\end{lemma}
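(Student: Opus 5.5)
By Lemma~\ref{patterns-through}, any occurrence of $\beta$ in $\omega$ can be written as
\[
  \beta=\st(\sigma|_T)\bigl[\gamma_i:i\in T\bigr]
\]
for some nonempty $T\subseteq[k]$ and nonempty $\gamma_i\leq\alpha_i$, with $\gamma_i=1$ whenever $\sigma(i)$ is repeated in $\sigma$. The plan is to exploit the simplicity of $\beta$ to show that this inflation is trivial in one of two ways. Either all $\gamma_i$ equal $1$, in which case $\beta=\st(\sigma|_T)\leq\sigma$. Or $|T|=1$, in which case $\beta=\gamma_i\leq\alpha_i$.

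Write $\tau=\st(\sigma|_T)$. This is a valid inflation of $\tau$ with parts $C_i$, $i\in T$, by the argument in the proof of Lemma~\ref{patterns-through}. Suppose $|T|\geq2$ and some $\gamma_i$ has length at least two. That part $C_i$ is nonsingleton. By the definition of inflation, every nonsingleton part forms an H-interval of $\beta$. (Alternatively, apply Lemma~\ref{project} with $T'=\{i\}$: here $\tau(i)$ occurs once because $\gamma_i\neq1$, so $\{i\}$ is an H-interval of $\tau$.) So $C_i$ is an H-interval of $\beta$ with $2\leq|C_i|<|\beta|$, where the strict inequality holds because $|T|\geq2$ and every other part is nonempty. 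This contradicts the simplicity of $\beta$. Hence either $|T|=1$, and then $\beta=1[\gamma_i]=\gamma_i\leq\alpha_i$, or every $\gamma_i=1$, and then $\beta=\tau\leq\sigma$.

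The only delicate point is confirming that a nonsingleton part really is a proper H-interval of $\beta$. Saturation is the concern: no other part may use a value in its band. This holds because $\tau(i)$ is a unique letter of $\tau$, and the value bands of distinct letters of $\tau$ are disjoint. Lemma~\ref{project} packages exactly this, so it is the cleanest reference. The degenerate case $\beta=1$ is trivial, since $1\leq\sigma$.
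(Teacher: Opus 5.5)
Your proof is correct and follows essentially the same route as the paper: you decompose $\beta$ via Lemma~\ref{patterns-through}, and when $|T|\geq 2$ you observe that a component $\gamma_i$ of length at least two would occupy a proper nonsingleton H-interval of $\beta$, so either $T$ is a singleton or every $\gamma_i=1$. One small correction: the case that needs separate treatment is $\beta$ empty, since Lemma~\ref{patterns-through} excludes it and it is trivially contained in $\sigma$; the case $\beta=1$ does not, because your main argument already covers it.
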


\begin{proof}
  The claim is immediate if $\beta$ is empty. Otherwise,
  write $\beta=\st(\sigma|_T)[\gamma_i:i\in T]$ as in
  Lemma~\ref{patterns-through}.  If $T=\{i\}$ then $\beta=\gamma_i\leq\alpha_i$ and we are done.
  Suppose instead that $|T|\ge 2$. A component $\gamma_i$ of length greater than one would occupy a
  proper H-interval of $\beta$ with more than one position, which simplicity
  forbids; so $\gamma_i=1$ for each $i$ and $\beta=\st(\sigma|_T)\leq\sigma$.
\end{proof}

\begin{proposition}\label{substitution-closed}
  A class of Cayley permutations is substitution-closed if and only if each of
  its basis elements is simple.
\end{proposition}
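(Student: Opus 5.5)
The plan is to prove the two implications separately, following Albert and Atkinson's argument for permutations. Lemma~\ref{localization} does the work in one direction. Theorem~\ref{substitution-thm} does it in the other.

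\emph{Simple basis implies substitution-closed.} Suppose every basis element of $\C=\Av(\mathcal B)$ is simple. Let $\sigma,\alpha_1,\ldots,\alpha_k\in\C$ satisfy~\eqref{eq:repeat-restriction}, and suppose $\omega=\sigma[\alpha_1,\ldots,\alpha_k]\notin\C$. Then some $\beta\in\mathcal B$ has $\beta\leq\omega$. Since $\beta$ is simple, Lemma~\ref{localization} gives $\beta\leq\sigma$ or $\beta\leq\alpha_i$ for some $i$. Either way a member of $\C$ contains a basis element, which is impossible. Hence $\omega\in\C$, and $\C$ is substitution-closed.

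\emph{Substitution-closed implies simple basis.} Suppose $\C$ is substitution-closed, and suppose some basis element $\beta$ is not simple. Then $\beta$ is nonempty and $|\beta|\geq2$. Note that the empty word lies in every nonempty class; if $\C$ is empty its basis is $\{\varepsilon\}$, and this case needs a convention. By Theorem~\ref{substitution-thm}, $\beta=\sigma[\alpha_1,\ldots,\alpha_k]$ with $\sigma$ simple and $k\geq2$. Since $\beta$ is not simple, $\beta\neq\sigma$, so $k<|\beta|$. Each $\alpha_i$ is a standardized factor of $\beta$, so $\alpha_i\leq\beta$. Each $\alpha_i$ also has length less than $|\beta|$, because $k\geq2$ and all components are nonempty. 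Likewise $\sigma$ is obtained by picking one position from each part, so $\sigma\leq\beta$ with $|\sigma|=k<|\beta|$. All these are proper patterns of the minimal non-member $\beta$, so they lie in $\C$. The constraint~\eqref{eq:repeat-restriction} holds because this is a genuine inflation. Substitution-closure then puts $\beta\in\C$, a contradiction.

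The main thing to check is the claim that $\sigma$ and each $\alpha_i$ are proper classical patterns of $\beta$. For $\alpha_i$ this holds because the $i$th part is a factor whose standardization is $\alpha_i$. For $\sigma$, choose one position from each part; by Lemma~\ref{patterns-through} with $T=[k]$ and all $\gamma_i=1$, this subsequence standardizes to $\sigma$. Properness then follows from the length counts above. Everything else is bookkeeping.
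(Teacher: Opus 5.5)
Your proof is correct and is essentially the paper's argument: Lemma~\ref{localization} handles the direction from a simple basis to substitution closure, and the decomposition of Theorem~\ref{substitution-thm} together with the minimality of basis elements handles the converse; your check that the quotient and components are proper patterns only spells out what the paper states in one line. Your aside about the empty class needs no special convention, since the empty word has no H-intervals and is therefore vacuously simple.
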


\begin{proof}
  Let $\mathcal B$ be the basis of $\C$. Suppose first that $\C$ is
  substitution-closed and some $\beta\in\mathcal B$ is not simple.
  By Theorem~\ref{substitution-thm},
  $\beta=\sigma[\alpha_1,\ldots,\alpha_k]$ with $\sigma$ simple of length
  $2\leq k<|\beta|$ and the components obeying~\eqref{eq:repeat-restriction}.
  The quotient and components are proper patterns of $\beta$, so they all
  lie in $\C$ by the minimality of a basis element. Substitution closure
  then gives $\beta\in\C$, a contradiction.

  Conversely, suppose every $\beta\in\mathcal{B}$ is simple, and let
  $\omega=\sigma[\alpha_1,\ldots,\alpha_k]$ with $\sigma,\alpha_i\in\C$.  If
  $\omega\notin\C$ then $\omega$ contains some $\beta\in\mathcal{B}$, necessarily simple; by
  Lemma~\ref{localization}, $\beta\leq\alpha_i$ for some $i$ or $\beta\leq\sigma$.
  Either possibility contradicts $\sigma,\alpha_i\in\C$. Hence $\omega\in\C$.
\end{proof}

We next compare a class with its substitution closure: the smallest
substitution-closed class $\overline{\C}$ containing it.

\begin{lemma}\label{closure-simples}
  A class $\C$ and its substitution closure $\overline{\C}$ have the same simple
  members.
\end{lemma}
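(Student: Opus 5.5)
The inclusion $\C\subseteq\overline{\C}$ immediately gives that every simple member of $\C$ is a simple member of $\overline{\C}$. The substance is the converse: every simple member of $\overline{\C}$ already lies in $\C$. For this I will describe $\overline{\C}$ explicitly and use Lemma~\ref{localization}.

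Let $\C_0=\C$, and let $\C_{j+1}$ be the set of all inflations $\sigma[\alpha_1,\ldots,\alpha_k]$ with $\sigma,\alpha_i\in\C_j$, subject to~\eqref{eq:repeat-restriction}. Since $\sigma=\sigma[1,\ldots,1]$ and $1\in\C_j$ whenever $\C$ is nonempty, we have $\C_j\subseteq\C_{j+1}$. (If $\C$ is empty the claim is trivial.) Put $\C_\infty=\bigcup_j\C_j$. I will check three properties.
\begin{enumerate}
\item $\C_\infty$ is substitution-closed. Finitely many members lie in a common $\C_j$, so any inflation of them lies in $\C_{j+1}$.
\item $\C_\infty$ is a class. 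I argue by induction on $j$. By Lemma~\ref{patterns-through}, any pattern $\beta$ of an inflation $\sigma[\alpha_1,\ldots,\alpha_k]$ has the form $\st(\sigma|_T)[\gamma_i]$. Here $\st(\sigma|_T)\leq\sigma$ and $\gamma_i\leq\alpha_i$, so these lie in $\C_j$ by the induction hypothesis. The restriction~\eqref{eq:repeat-restriction} is respected, as that lemma notes, so $\beta\in\C_{j+1}$.
\item $\C_\infty$ is contained in every substitution-closed class containing $\C$, again by induction on $j$.
\end{enumerate}
Together these give $\C_\infty=\overline{\C}$.

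It then remains to show by induction on $j$ that every simple member of $\C_j$ lies in $\C$. Take a simple $\beta\in\C_{j+1}$, written as $\beta=\sigma[\alpha_1,\ldots,\alpha_k]$ with $\sigma,\alpha_i\in\C_j$. Since $\beta\leq\beta$, Lemma~\ref{localization} yields $\beta\leq\sigma$ or $\beta\leq\alpha_i$ for some $i$. In either case $\beta$ is a pattern of a member of $\C_j$. Since $\C_j$ is a class by the argument above, $\beta\in\C_j$, and the induction hypothesis gives $\beta\in\C$.

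The only delicate point is verifying that $\C_{j+1}$ is closed under patterns. Here one must make sure that the decomposition supplied by Lemma~\ref{patterns-through} is a legitimate inflation whose quotient and components come from $\C_j$. That is exactly what the lemma guarantees, so I expect no real obstacle.
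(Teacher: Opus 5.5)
Your proof is correct, but it handles the closure differently from the paper. Both arguments rest on Lemma~\ref{localization}; where they differ is in how they get from that lemma to $\overline{\C}$.

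\textbf{The paper's route.} It never describes $\overline{\C}$ explicitly. It introduces the auxiliary set $\mathcal D$ of all Cayley permutations whose simple patterns all lie in $\C$. It observes that $\mathcal D$ is a class containing $\C$, and that it is substitution-closed by Lemma~\ref{localization}. Minimality of the closure then gives $\C\subseteq\overline{\C}\subseteq\mathcal D$. A simple member of $\overline{\C}$ is a simple pattern of itself, so it lies in $\C$.

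\textbf{Your route.} You build $\overline{\C}$ from below as $\bigcup_j\C_j$ and run inductions on $j$. This obliges you to check separately, via Lemma~\ref{patterns-through} and its remark on the repeat restriction, that each $\C_j$ is closed under patterns. You also need minimality and substitution-closure of the union.

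\textbf{Comparison.} The paper's sandwich argument is shorter. It needs only Lemma~\ref{localization} and the defining minimality of $\overline{\C}$. Yours costs more verification, but it yields a constructive description of $\overline{\C}$ as the iterated-inflation closure of $\C$. In particular it proves that the closure exists and is a class, which the paper leaves implicit.

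One small slip: $1\in\C_j$ requires $\C$ to contain a nonempty word, not merely to be nonempty. The leftover case $\C=\{\varepsilon\}$ is trivial, since then $\overline{\C}=\C$.
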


\begin{proof}
  Let $\mathcal D$ be the set of all Cayley permutations $\omega$ such that
  every simple Cayley permutation $\sigma\leq \omega$ belongs to $\C$.
  It is a class containing $\C$, and
  Lemma~\ref{localization} shows that it is substitution-closed.
  Thus $\C\subseteq\overline{\C}\subseteq\mathcal D$.
  If $\sigma$ is a simple member of $\overline{\C}$, then
  $\sigma\in\mathcal D$ and $\sigma\leq\sigma$, so the defining condition
  of $\mathcal D$ gives $\sigma\in\C$. Conversely, every simple member
  of $\C$ belongs to $\overline{\C}$.
  Hence the two classes have the same simple members.
\end{proof}

\begin{corollary}\label{closure-basis}
  If the longest simple members of a class $\C$ have length $k$, then the basis
  elements of $\overline{\C}$ have length at most $k+2$.
\end{corollary}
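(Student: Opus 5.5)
By Proposition~\ref{substitution-closed}, every basis element of $\overline{\C}$ is simple, so it suffices to show that a simple basis element $\beta$ of $\overline{\C}$ has length at most $k+2$. I will argue by contradiction: suppose $|\beta|\geq k+3$. The aim is to find a simple proper pattern of $\beta$ that is too long to lie in $\overline{\C}$.

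First I will establish a containment fact. Since $\beta$ is a basis element, every proper pattern of $\beta$ lies in $\overline{\C}$. By Lemma~\ref{closure-simples}, the simple members of $\overline{\C}$ are exactly those of $\C$, and these all have length at most $k$. Hence every simple proper pattern of $\beta$ has length at most $k$.

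Next I will apply Theorem~\ref{deletion} to $\beta$. It is simple of length at least $k+3\geq 2$, so it contains a simple Cayley permutation $\nu$ of length $|\beta|-1$ or $|\beta|-2$. In either case $|\nu|\geq k+1$, and $\nu$ is a proper pattern of $\beta$ because it is obtained by deleting one or two letters. This contradicts the bound from the previous step. Therefore $|\beta|\leq k+2$.

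The main point needing care is the degenerate small cases. If $k$ is small, say $k=1$, then $|\beta|\geq 4$ and Theorem~\ref{deletion} still applies, since it only requires length at least $2$. The only remaining subtlety is checking that a pattern obtained by deleting one or two letters is proper, which is immediate from the length drop. So I expect no real obstacle: the argument is a direct combination of Proposition~\ref{substitution-closed}, Lemma~\ref{closure-simples}, and Theorem~\ref{deletion}.
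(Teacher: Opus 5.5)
Your proof is correct and follows the paper's argument essentially verbatim. Proposition~\ref{substitution-closed} makes each basis element of $\overline{\C}$ simple, and Theorem~\ref{deletion} then yields a simple proper pattern of length at least $k+1$. Lemma~\ref{closure-simples} excludes that pattern from $\overline{\C}$, which contradicts the fact that every proper pattern of a basis element lies in the class.
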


\begin{proof}
  Let $\beta$ be a basis element of $\overline{\C}$, of length $\ell$.  As
  $\overline{\C}$ is substitution-closed, Proposition~\ref{substitution-closed}
  makes $\beta$ simple.  Suppose $\ell\geq k+3$.  By Theorem~\ref{deletion},
  $\beta$ properly contains a simple Cayley permutation $\sigma$ of length
  $\ell-1$ or $\ell-2$, hence of length greater than $k$; so $\sigma\notin\C$
  and, by Lemma~\ref{closure-simples}, $\sigma\notin\overline{\C}$.  But every
  proper pattern of a basis element lies in the class.  Hence $\ell\leq k+2$.
\end{proof}

For a substitution-closed class we can now apply
Theorem~\ref{enumeration-compatible}. When there are only finitely many
simple members, the sum over simple quotients is a polynomial.

\begin{proposition}\label{closed-algebraic}
  Let $\C$ be a substitution-closed class of Cayley permutations containing at
  least one nonempty word, with only finitely many simple members. Let $F$
  be the ordinary generating function for its nonempty members, and put
  \[
    \Phi_{\C}(x,y)
    =\sum_{\substack{\sigma\in\C\text{ simple}\\|\sigma|\geq3}}
    x^{|\sigma|-\mu(\sigma)}y^{\mu(\sigma)}.
  \]
  For a word $\omega$, let $\delta_{\omega}$ be $1$ if $\omega\in\C$ and $0$ otherwise. Then
  \begin{equation}\label{eq:closed-algebraic}
    \Phi_{\C}(x,F)=F-x-\delta_{11}x^2
      -\frac{(\delta_{12}+\delta_{21})F^2}{1+F}.
  \end{equation}
  In particular, the ordinary generating function of $\C$ is algebraic
  over $\QQ(x)$.
\end{proposition}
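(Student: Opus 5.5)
The plan is to apply Theorem~\ref{enumeration-compatible} with $Q$ the property ``belongs to $\C$'' and then specialize $t=1$. First I check that $Q$ satisfies $Q(1)$ and~\eqref{eq:inflation-compatible}. Since $\C$ contains a nonempty word and is hereditary, it contains $1$. For the forward implication of~\eqref{eq:inflation-compatible}, if $\sigma[\alpha_1,\ldots,\alpha_k]\in\C$, then $\sigma$ and every $\alpha_i$ are classical patterns of the inflation. Indeed, $\alpha_i$ is the standardized factor on the $i$th part, and $\sigma$ is obtained by choosing one position from each part; this is Lemma~\ref{patterns-through} with $T=[k]$, $\gamma_i=1$, or with $T=\{i\}$. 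Hence they lie in $\C$. The reverse implication is exactly substitution closure.

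Theorem~\ref{enumeration-compatible} then gives
\[
F_Q(x,t)=tx+\delta_{11}x^2+\frac{(\delta_{12}+\delta_{21})F_Q^2}{1+F_Q}+H_Q(x,F_Q),
\]
where $H_Q=\Phi_{\C}$ by definition. Setting $t=1$ turns $F_Q(x,1)$ into the ordinary generating function $F$ of the nonempty members of $\C$, since $x^{|\omega|}t^{\mu(\omega)}$ becomes $x^{|\omega|}$. Rearranging yields~\eqref{eq:closed-algebraic}. The one point needing care is that the specialization $t=1$ commutes with the substitution $y=F_Q$ inside $H_Q$. This is harmless because $\Phi_{\C}$ is a polynomial: there are only finitely many simple members. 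Moreover, $F_Q(x,t)$ lies in $\mathbb{Z}[t][[x]]$, so each coefficient of $x^n$ is a polynomial in $t$ and evaluation at $t=1$ is a ring homomorphism.

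For algebraicity, I multiply~\eqref{eq:closed-algebraic} by $1+F$ to obtain
\[
P(x,F)=(1+F)\Phi_{\C}(x,F)-(1+F)(F-x-\delta_{11}x^2)+(\delta_{12}+\delta_{21})F^2=0,
\]
with $P\in\QQ[x][y]$. The main obstacle is showing that $P$ is not the zero polynomial in $y$, so that this relation is a genuine algebraic equation for $F$. I would compare top-degree terms in $y$. If $\Phi_{\C}$ has $y$-degree $d\geq2$, the term $y\,\Phi_{\C}$ contributes degree $d+1>2$, which nothing else can cancel. The remaining cases are $\Phi_{\C}$ of $y$-degree at most $1$, including $\Phi_{\C}=0$; there I would track the coefficient of $y^2$ explicitly. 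Alternatively, and more simply, I would look at the constant term in $y$: it equals $\Phi_{\C}(x,0)+x+\delta_{11}x^2$. Every monomial of $\Phi_{\C}$ has total degree at least $3$, so this constant term has a nonzero $x$-coefficient. Hence $P\neq0$, and $F$ is algebraic over $\QQ(x)$. The full generating function $1+F$ is then algebraic as well.
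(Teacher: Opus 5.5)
Your proposal is correct and follows the paper's proof essentially step for step: you verify $Q(1)$ and \eqref{eq:inflation-compatible} for membership in $\C$, specialize Theorem~\ref{enumeration-compatible} at $t=1$, and clear the denominator $1+F$. The only difference is cosmetic. You certify $P\neq0$ through the coefficient of $x^1y^0$, whereas the paper uses the coefficient of $x^0y^1$. Both work because every monomial of $\Phi_{\C}$ has total degree at least three.
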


\begin{proof}
  We apply Theorem~\ref{enumeration-compatible} to the property
  $Q(\omega)\Longleftrightarrow \omega\in\C$, first verifying its hypotheses.
  Since $\C$ contains a nonempty word and is closed under classical pattern
  containment, it contains $1$. For an inflation
  $\omega=\sigma[\alpha_1,\ldots,\alpha_k]$, the quotient $\sigma$ and each
  component $\alpha_i$ are classical patterns of $\omega$. Thus, if $\omega\in\C$,
  then $\sigma,\alpha_1,\ldots,\alpha_k\in\C$. Conversely, if the quotient
  and all components belong to $\C$, substitution closure gives $\omega\in\C$.
  This verifies~\eqref{eq:inflation-compatible}.
  For this property, $F_Q(x,1)=F$, $H_Q=\Phi_{\C}$, and $\chi_{\omega}=\delta_{\omega}$.
  Setting $t=1$ in Theorem~\ref{enumeration-compatible} and rearranging
  gives~\eqref{eq:closed-algebraic}.

  To prove algebraicity, note that $\Phi_{\C}$ is a polynomial because
  $\C$ has only finitely many simple members. Multiplying
  \eqref{eq:closed-algebraic} by $1+F$ and rearranging gives $P(x,F)=0$, where
  \[
    P(x,y)=(1+y)\bigl(y-x-\delta_{11}x^2-\Phi_{\C}(x,y)\bigr)
      -(\delta_{12}+\delta_{21})y^2
    \in\QQ[x,y].
  \]
  Every monomial of $\Phi_{\C}$ has total degree at least three, so the
  coefficient of $x^0y$ in $P$ is $1$. In particular, $P$ is a nonzero
  polynomial, and $P(x,F)=0$ proves that $F$ is algebraic over $\QQ(x)$.
  Including the empty word gives the class generating function $1+F$,
  which is therefore algebraic as well.
\end{proof}

\begin{example}\label{rational-class}
  For $k\geq3$, let $\C_k=\Av(1^k,21)$. Its members are precisely
  \[
    1^{a_1}2^{a_2}\cdots m^{a_m},
    \qquad 1\leq a_i<k.
  \]
  Every constant word $1^{a_1}$ on the displayed list is simple, as is $12$.  Any other
  member with a run of length at least two has that run as a proper H-interval,
  while one with only singleton runs and length at least three begins with the
  proper H-interval $12$.
  Therefore, the nonempty simple members of $\C_k$ are
  \[
    1,11,\ldots,1^{k-1},\quad\text{and}\quad 12.
  \]
  Now, in a nonempty Cayley permutation of $\C_k$ the first value occurs between one and $k-1$ times, after which the
  remaining suffix, standardized, is again in $\C_k$. Thus, writing $G$
  for the ordinary generating function of $\C_k$, we have
  \[
    G(x)=1+(x+x^2+\cdots+x^{k-1})G(x),
  \]
  and hence
  \[
    G(x)=\frac{1}{1-x-x^2-\cdots-x^{k-1}}.
  \]
  In particular, $\Av(111,21)$ has generating function $1/(1-x-x^2)$.

  We obtain the same answer from Proposition~\ref{closed-algebraic}.
  The basis elements $1^k$ and $21$ are simple, so $\C_k$ is
  substitution-closed. Here $\delta_{11}=\delta_{12}=1$, $\delta_{21}=0$, and
  $\Phi_{\C_k}=\sum_{j=3}^{k-1}x^j$, since the constant quotients have no
  singleton fibres. Thus~\eqref{eq:closed-algebraic} simplifies to
  $F=(x+x^2+\cdots+x^{k-1})(1+F)$, as required.
\end{example}

\begin{example}\label{algebraic-class}
  Let $\C$ be the class avoiding the classical patterns
  \[
    111,\quad 121,\quad 212,\quad 231.
  \]
  The simple members of lengths one and two are $1,11,12,21$.
  Direct enumeration finds no simple members of length three or four, so
  Corollary~\ref{two-lengths} shows that these are all the nonempty simple
  members of $\C$.

  This class is not substitution-closed, since its basis element $231$
  is not simple. Thus Proposition~\ref{closed-algebraic} does not apply.
  Instead, we use Theorem~\ref{substitution-thm} to partition the nonempty
  members by their unique simple quotient. Each quotient is a classical
  pattern of the member being decomposed and hence belongs to $\C$, so the
  only possibilities are $1,11,12,21$. Let $F$ be the ordinary generating
  function for the
  nonempty members of $\C$. The quotients $1$ and $11$ contribute $x$ and
  $x^2$, respectively.

  For the quotient $12$, the canonical decomposition is
  $\alpha\oplus\beta$, with $\alpha$ plus indecomposable.
  The class $\C$ is closed under direct sums because each forbidden pattern
  is plus indecomposable. If $I$ counts the nonempty plus indecomposable
  members, then $F=I+IF$, so $I=F/(1+F)$.
  Hence this quotient contributes $IF=F^2/(1+F)$.

  For the quotient $21$, the canonical decomposition is
  $\alpha\ominus\beta$, with $\alpha$ minus indecomposable.
  Since $\beta$ is nonempty, an increasing pair in $\alpha$, together with
  any letter of $\beta$, would form $231$. Thus $\alpha$ must avoid $12$. It is
  therefore weakly decreasing, and if it used more than one value it would be
  minus decomposable; hence it is constant.  Avoidance of $111$ leaves only
  $\alpha=1,11$.
  Conversely, with $\alpha=1$ or $11$ and $\beta\in\C$ nonempty,
  none of the forbidden patterns can occur across the two components.
  Thus the quotient $21$ contributes $(x+x^2)F$. Adding the four
  contributions gives
  \[
    F=x+x^2+\frac{F^2}{1+F}+(x+x^2)F.
  \]
  On writing $G=1+F$ for the ordinary generating function of $\C$, this
  simplifies to
  \[
    G(x)=1+(x+x^2)G(x)^2.
  \]
  A direct bijection also explains this equation.
  Avoidance of $111$, $121$, and $212$ says that every value occurs
  in one contiguous run of length one or
  two.  Contracting the runs gives a $231$-avoiding permutation, and conversely
  each entry of any such permutation may be replaced independently by a run of
  length one or two.  Thus, if $\mathrm{Cat}$ is the Catalan generating
  function,
  \[
    G(x)=\mathrm{Cat}(x+x^2)
    =\frac{1-\sqrt{1-4x-4x^2}}{2x(1+x)}.
  \]
  Example~\ref{algebraic-system-class} illustrates the finite system constructed
  in the proof of Theorem~\ref{algebraicity} for a class with one further
  simple member.
\end{example}

\subsection{General classes}

We now remove the hypothesis of substitution closure and extend
Proposition~\ref{closed-algebraic} to any class. Namely, we show
in Theorem~\ref{finite-basis} and Theorem~\ref{algebraicity}
that every class with finitely many simple members has a finite
basis and an algebraic generating function. The first step is to rule out
infinite antichains. A partially ordered set is \emph{partially well ordered}
if it contains neither an infinite strictly descending chain nor an infinite
antichain. For Cayley permutations, a strictly descending chain is necessarily
finite, since length decreases at each step.

The \emph{closure} of a set under a family of operations consists of all
elements obtained by applying those operations finitely many times, starting
with elements of the set. We use the following form of Higman's theorem,
stated by Albert and Atkinson~\cite{AlbAtk05}.

\begin{theorem}[Higman~{\cite{Hig52}}]\label{higman}
  Let $(P,\leq)$ be a partially ordered set and let
  $f_i:P^{n_i}\to P$, $1\leq i\leq r$, be finitely many operations of finite
  arity. Suppose each $f_i$ is order preserving in every argument; that is,
  for all $(a_1,\ldots,a_{n_i}),(b_1,\ldots,b_{n_i})\in P^{n_i}$,
  \[
    a_j\leq b_j\text{ for every }1\leq j\leq n_i
    \quad\Longrightarrow\quad
    f_i(a_1,\ldots,a_{n_i})\leq f_i(b_1,\ldots,b_{n_i}).
  \]
  Suppose also that
  \[
    a_j\leq f_i(a_1,\ldots,a_{n_i})
    \quad\text{for every }(a_1,\ldots,a_{n_i})\in P^{n_i}
    \text{ and }1\leq j\leq n_i.
  \]
  Then the closure of any finite subset of $P$ under these operations is
  partially well ordered.
\end{theorem}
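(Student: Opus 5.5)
The plan is the standard minimal bad sequence argument of Nash-Williams. Fix a finite set $X\subseteq P$ and let $\overline X$ be its closure under $f_1,\ldots,f_r$. Call an infinite sequence $(b_k)$ in $\overline X$ \emph{good} if $b_k\le b_l$ for some $k<l$, and \emph{bad} otherwise. Both an infinite strictly descending chain and an infinite antichain give bad sequences. So it suffices to show that every sequence in $\overline X$ is good.

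To each $b\in\overline X$ assign its \emph{rank} $\rho(b)$: the least number of operation applications in a derivation of $b$ from $X$. Elements of $X$ have rank $0$. If $\rho(b)>0$, fix a derivation of minimal length. Its last step writes $b=f_i(a_1,\ldots,a_{n_i})$ with every $a_j\in\overline X$ and $\rho(a_j)<\rho(b)$.

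Suppose a bad sequence exists. Build a minimal bad sequence inductively: choose $b_1$ of least rank among first terms of bad sequences; then $b_2$ of least rank among second terms of bad sequences starting with $b_1$; and so on. The resulting $(b_k)$ is bad, since any witness $k<l$ would already appear in a finite prefix. Now apply three reductions.
\begin{enumerate}
\item A bad sequence has no repeated terms, because $b\le b$ would make it good. Since $X$ is finite, only finitely many $b_k$ lie in $X$.
\item By pigeonhole, pass to an infinite subsequence $(b_{k_m})$ in which every term has the form $f_i(a^{m}_1,\ldots,a^{m}_{n})$ for one fixed $i$ and $n=n_i$, each $a^m_j$ coming from the chosen minimal derivation.
\item Let $A=\{a^m_j : m\ge1,\ 1\le j\le n\}$. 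Show that $A$ is partially well ordered.
\end{enumerate}

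The key step is showing that $A$ is partially well ordered. Suppose $(c_s)$ is a bad sequence in $A$, say $c_s=a^{m_s}_{j_s}$. After passing to a subsequence, we may assume $k_{m_s}$ is strictly increasing and $k_{m_1}$ is as small as possible. Form the spliced sequence
\[
b_1,\ldots,b_{k_{m_1}-1},c_1,c_2,\ldots
\]
Since $\rho(c_1)<\rho(b_{k_{m_1}})$, this sequence has a smaller term at position $k_{m_1}$ than the minimal bad sequence, so by minimality it must be good.
\begin{itemize}
\item A comparison $b_p\le b_q$ with $p<q<k_{m_1}$ is impossible, since $(b_k)$ is bad.
\item A comparison $c_s\le c_t$ with $s<t$ is impossible, since $(c_s)$ is bad.
\item A comparison $b_p\le c_s$ with $p<k_{m_1}\le k_{m_s}$ is also impossible. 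The hypothesis $a_j\le f_i(a_1,\ldots,a_n)$ gives $c_s\le b_{k_{m_s}}$, so $b_p\le b_{k_{m_s}}$, contradicting badness of $(b_k)$.
\end{itemize}
So the spliced sequence is bad, a contradiction.

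This splicing step is the main obstacle, and the index bookkeeping there must be set up carefully. I would first try the reindexing above: keep the index $k_{m_s}$ attached to each $c_s$ and require it to be increasing. This works because each fixed $m$ contributes only $n$ elements to $A$, so any infinite sequence in $A$ visits infinitely many distinct $m$.

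Once $A$ is partially well ordered, the product $A^n$ with the componentwise order is also partially well ordered. This follows by the usual Ramsey argument: extract from any sequence an infinite subsequence that is nondecreasing in the first coordinate, then in the second, and so on. Applying this to the tuples $(a^m_1,\ldots,a^m_n)$ gives $m<m'$ with $a^m_j\le a^{m'}_j$ for every $j$. Monotonicity of $f_i$ in every argument then yields $b_{k_m}\le b_{k_{m'}}$ with $k_m<k_{m'}$, contradicting badness. Hence $\overline X$ contains no bad sequence, and so it is partially well ordered.
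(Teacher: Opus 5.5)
The paper gives no proof of this statement. It quotes Higman's theorem in the form used by Albert and Atkinson, and only checks the two hypotheses for the operations $f_\sigma$ in the proof of Proposition~\ref{pwo}. Your argument supplies the missing proof by Nash-Williams's minimal bad sequence method, and it is correct.

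The three ingredients fit together as they should:
\begin{itemize}
\item The last step of a minimal derivation lowers the rank, so splicing in $c_1$ at position $k_{m_1}$ contradicts minimality.
\item The hypothesis $a_j\le f_i(a_1,\ldots,a_{n_i})$ gives $c_s\le b_{k_{m_s}}$. Together with $p<k_{m_1}\le k_{m_s}$, this excludes every cross comparison $b_p\le c_s$.
\item Monotonicity of $f_i$ turns a componentwise comparable pair of argument tuples into a comparable pair $b_{k_m}\le b_{k_{m'}}$.
\end{itemize}
Compared with the citation, your route makes the section self-contained. It also proves directly the stronger statement that every sequence in the closure is good. That gives both halves of the paper's definition of partial well order at once: no infinite antichain and no infinite strictly descending chain.

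A few points of presentation need tightening:
\begin{itemize}
\item The requirement that $k_{m_1}$ be ``as small as possible'' is superfluous. You only use $k_{m_1}\le k_{m_s}$ for every $s$, which follows from the indices increasing.
\item The remark that any infinite sequence in $A$ visits infinitely many values of $m$ is false for sequences with repetitions. What you need, and what holds, is this: fix one representation $c_s=a^{m_s}_{j_s}$ for each $s$. Since the terms of a bad sequence are distinct, each fixed $m$ has at most $n$ indices $s$ with $m_s=m$.
\item Define a derivation explicitly, for instance as a finite list in which each entry lies in $X$ or is a value of some $f_i$ at earlier entries. Then $\rho(a_j)<\rho(b)$ is immediate by taking the prefix ending at $a_j$.
\end{itemize}
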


The proof of the next proposition adapts Albert and Atkinson's
approach~\cite[Corollary~8]{AlbAtk05} to our settings.

\begin{proposition}\label{pwo}
  A class $\C$ of Cayley permutations with only finitely many simple members is
  partially well ordered.
\end{proposition}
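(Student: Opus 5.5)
The approach is to apply Higman's theorem (Theorem~\ref{higman}) to the poset $(\Cay_+,\leq)$, using finitely many inflation operations indexed by the simple members of $\C$. Let $\sigma_1,\ldots,\sigma_r$ be the simple members of $\C$ of length at least two; there are finitely many by hypothesis.

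For each $\sigma=\sigma_s$ of length $k$ we define an operation $f_\sigma$ whose arity is $\mu(\sigma)$, the number of positions of $\sigma$ lying in singleton fibres. The map $f_\sigma$ sends a tuple $(\alpha_i)$, indexed by those positions, to $\sigma[\alpha_1,\ldots,\alpha_k]$, with $\alpha_i=1$ forced at positions of repeated values. This is always a legal inflation by~\eqref{eq:repeat-restriction}. If $\mu(\sigma)=0$, for instance $\sigma=11$, the operation is nullary, so we simply add $\sigma$ itself to the finite starting set. The starting set is $\{1\}$ together with these constant simple members.

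We then verify Higman's two hypotheses. Monotonicity holds because if $\alpha_i\leq\beta_i$ for each $i$, then Lemma~\ref{patterns-through}, applied with $T=[k]$ and $\gamma_i=\alpha_i$, gives $\sigma[\alpha]\leq\sigma[\beta]$; the forced entries $1\leq 1$ cause no difficulty. The dominance condition $\alpha_j\leq\sigma[\alpha]$ is clear, since the $j$th part of the inflation is a factor order isomorphic to $\alpha_j$. Theorem~\ref{higman} therefore shows that the closure $\mathcal{K}$ of the starting set is partially well ordered.

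The remaining step, and the crux of the argument, is showing $\C\subseteq\mathcal{K}$. We proceed by induction on length using Theorem~\ref{substitution-thm}. Every $\omega\in\C$ of length at least two is $\sigma[\alpha_1,\ldots,\alpha_k]$ with $\sigma$ simple and $k\geq2$. The quotient $\sigma$ and each component $\alpha_i$ are classical patterns of $\omega$, so they lie in $\C$. In particular $\sigma$ is one of the $\sigma_s$. Each $\alpha_i$ has length less than $|\omega|$ because $k\geq2$, so each $\alpha_i$ lies in $\mathcal{K}$ by induction. Hence $\omega=f_\sigma(\ldots)\in\mathcal{K}$, or $\omega$ lies in the starting set when $\mu(\sigma)=0$, because all components are then forced to be $1$.

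Subsets of partially well-ordered sets are partially well ordered, so $\C$ is partially well ordered, with the empty word adding nothing. The only care needed is the bookkeeping of forced components, which is why the arity is $\mu(\sigma)$ and not $k$.
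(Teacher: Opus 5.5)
Your proposal is correct and follows essentially the same route as the paper: you apply Higman's theorem to the inflation operations $f_\sigma$ of arity $\mu(\sigma)$, start from $1$ together with the simple members $\sigma$ of $\C$ with $\mu(\sigma)=0$, and use induction on length via Theorem~\ref{substitution-thm} to place every nonempty member of $\C$ in the closure. One small wording point: the simple members with $\mu(\sigma)=0$ that you add to the starting set include non-constant words such as $1212$, so ``constant'' is apt only in the sense of nullary operations, though your construction already adds exactly the right set.
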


\begin{proof}
  Let $\mathcal S$ be the finite set of nonempty simple members of $\C$.
  For $\sigma\in\mathcal S$ of length $k\geq2$ with $\mu(\sigma)>0$,
  define an operation on nonempty Cayley permutations by inflating the
  singleton fibres of $\sigma$. More precisely, let
  $i_1<\cdots<i_{\mu(\sigma)}$ be the positions whose values
  occur only once in $\sigma$. Then, define
  \[
    f_\sigma\colon
    (\Cay_+)^{\mu(\sigma)}
    \to\Cay_+,
    \qquad
    f_\sigma(\gamma_1,\ldots,\gamma_{\mu(\sigma)})
      =\sigma[\alpha_1,\ldots,\alpha_k],
  \]
  where $\alpha_{i_j}=\gamma_j$ for
  $1\leq j\leq\mu(\sigma)$ and $\alpha_i=1$ at all remaining positions.
  For instance, let
  $\sigma=3132$, with $\mu(\sigma)=2$, and let $(\gamma_1,\gamma_2)=(121,11)$. Then
  \[
    f_\sigma(\gamma_1,\gamma_2) = 3132[1,121,1,11],
  \]
  where the $\mu(\sigma)$ singleton fibres of $\sigma$ are inflated by
  $\gamma_1$ and $\gamma_2$ and the others are inflated by~$1$; that is,
  $\alpha_1=\alpha_3=1$, $\alpha_2=121$, and $\alpha_4=11$.
  Lemma~\ref{patterns-through} shows that this operation is order preserving
  in each argument and that every argument is a pattern of the result.
  Thus the hypotheses of Higman's theorem hold.

  To generate every nonempty member of $\C$, we include $1$ and every
  simple member with no singleton fibres: such a member cannot be
  obtained by nontrivial inflation of a shorter quotient. Let
  \[
    \mathcal G=\{1\}\cup\{\sigma\in\mathcal S:\mu(\sigma)=0\}.
  \]
  This set is finite. We show by induction on $|\omega|$ that every nonempty
  member $\omega$ of $\C$ belongs to its closure under the operations $f_\sigma$.
  If $|\omega|=1$, then $\omega=1\in\mathcal G$. Otherwise,
  Theorem~\ref{substitution-thm} gives
  $\omega=\sigma[\alpha_1,\ldots,\alpha_k]$, with $\sigma$ simple and $k\geq2$.
  The quotient and components are classical patterns of $\omega$, so they belong
  to $\C$; in particular, $\sigma\in\mathcal S$.
  If $\mu(\sigma)=0$, every component is forced to be $1$, and hence
  $\omega=\sigma\in\mathcal G$. If $\mu(\sigma)>0$, each component at a singleton
  fibre has length less than $|\omega|$ and therefore belongs to the closure by
  induction. Applying $f_\sigma$ to these components produces $\omega$, since
  all remaining components are forced to be $1$.

  By Theorem~\ref{higman}, the closure of the finite set $\mathcal G$ is
  partially well ordered. Its subset of nonempty members of $\C$ is
  therefore partially well ordered as well. Adjoining the empty word
  creates neither an infinite descending chain nor an infinite antichain,
  so $\C$ is partially well ordered.
\end{proof}

\begin{theorem}\label{finite-basis}
  A class of Cayley permutations with only finitely many simple members has a
  finite basis.
\end{theorem}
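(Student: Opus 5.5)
The plan is to follow Albert and Atkinson's argument: show that the basis elements of $\C$ are essentially one-point extensions of members of a partially well ordered set built from $\C$, and then use Proposition~\ref{pwo}. Let $\mathcal B$ be the basis of $\C$ and let $k$ be the maximum length of a simple member of $\C$. Every basis element $\beta$ is either simple or not. If $\beta$ is simple, then every proper pattern of $\beta$ lies in $\C$. By Theorem~\ref{deletion}, $\beta$ contains a simple pattern of length $|\beta|-1$ or $|\beta|-2$, and that pattern lies in $\C$. Hence $|\beta|\leq k+2$, so there are only finitely many simple basis elements.

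For the non-simple basis elements, the idea is to map each $\beta\in\mathcal B$ to its one-point deletions, which lie in $\C$. The standard device is to consider the class $\C^{+1}$ of Cayley permutations obtained from a member of $\C$ by inserting one letter; every basis element lies in $\C^{+1}$. The plan is to show that $\C^{+1}$ also has only finitely many simple members, and then apply Proposition~\ref{pwo} to it.

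It is clear that $\C^{+1}$ is a class, since deleting a point from a one-point extension of $\omega\in\C$ gives either $\omega$ or a one-point extension of a pattern of $\omega$. Its simple members are bounded in length by the following argument. If $\omega'$ is simple and $\omega'\setminus i\in\C$, then the members of $\C$ are either decomposed as inflations whose simple quotients have length at most $k$, or are short. The natural claim is that a simple Cayley permutation $\omega'$ with $|\omega'|\geq k+3$ cannot have a one-point deletion in $\C$. To see this, apply Theorem~\ref{deletion} twice, or argue directly. Deleting a point from a simple $\omega'$ gives $\nu=\omega'\setminus i$, whose substitution decomposition has a simple quotient of length at most $k$. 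Every part of size at least two is an H-interval of $\nu$, and adding back the one point must destroy all these H-intervals. With many points and few parts, some part stays large, and one point can break an H-interval only by sitting inside its position range or by duplicating one of its values. I expect this to force $\omega'$ to have a proper H-interval, giving a bound roughly of the form $|\omega'|\leq 2k+2$, with small cases handled separately.

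Once $\C^{+1}$ has finitely many simple members, Proposition~\ref{pwo} makes it partially well ordered. Since $\mathcal B$ is an antichain contained in $\C^{+1}$, it is finite. The main obstacle is the step bounding the simple members of $\C^{+1}$, because for Cayley permutations the inserted point may repeat an existing value and so break saturation rather than contiguity. If that bound proves awkward, I would first try to replace $\C^{+1}$ by the substitution closure. By Lemma~\ref{closure-simples} and Corollary~\ref{closure-basis}, $\overline{\C}$ has a finite basis and is partially well ordered by Proposition~\ref{pwo}. One could then try to describe $\C$ inside $\overline{\C}$ by the finitely many inflation-based conditions coming from the non-simple basis elements, in the spirit of the pattern-through-inflation description of Lemma~\ref{patterns-through}.
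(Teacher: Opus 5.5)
Your bound on the simple basis elements is correct; it is essentially Corollary~\ref{closure-basis}. The main route, however, rests on a claim that is false for Cayley permutations: that $\C^{+1}$ has only finitely many simple members.

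Here is a counterexample. Let $\mathcal D=\{\varepsilon,1,11,12,21,121\}$ be the class of patterns of $121$, and let $\C=\overline{\mathcal D}$. By Lemma~\ref{closure-simples}, the nonempty simple members of $\C$ are $1,11,12,21,121$, so $k=3$. Iterating $\nu\mapsto 121[1,\nu,1]$ from $\nu=1$ shows that $\nu_m=12\cdots m(m-1)\cdots21\in\C$ for every $m$.

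Now insert a $1$ right after the peak, giving $\omega_m=12\cdots m\,1\,(m-1)\cdots21$ of length $2m$. Its ballot has blocks $F_1=\{1,m+1,2m\}$, $F_j=\{j,2m+1-j\}$ for $2\le j\le m-1$, and $F_m=\{m\}$.
\begin{itemize}
\item A run $F_c\cup\cdots\cup F_d$ with $c\geq2$ omits position $m+1$. Unless it equals $\{m\}$, it contains positions on both sides of $m+1$, so it is not an interval.
\item A run with $c=1$ contains $1$ and $2m$, so it is an interval only if it is all of $[2m]$.
\end{itemize}
By Corollary~\ref{ballot-simple}, $\omega_m$ is simple. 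Also $\omega_m\setminus(m+1)=\nu_m\in\C$.

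So $\C^{+1}$ has simple members of every even length $2m\geq4$. Your hoped-for bound $|\omega'|\le 2k+2$ fails, and Proposition~\ref{pwo} cannot be applied to $\C^{+1}$. The reason is exactly the obstacle you flagged. The inserted point sits inside the innermost part and carries a value repeated outside every part. It therefore breaks an unboundedly deep chain of nested H-intervals at once: contiguity fails without it, and saturation fails with it.

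Your fallback is the right setting, but you do not need to describe $\C$ inside $\overline{\C}$ by inflation conditions. Instead, split the basis $\mathcal B$ of $\C$ by membership in $\overline{\C}$.
\begin{itemize}
\item The elements of $\mathcal B\cap\overline{\C}$ form an antichain in $\overline{\C}$. By Lemma~\ref{closure-simples} and Proposition~\ref{pwo}, $\overline{\C}$ is partially well ordered, so there are only finitely many such elements.
\item An element of $\mathcal B\setminus\overline{\C}$ has all its proper patterns in $\C\subseteq\overline{\C}$. It is therefore a basis element of $\overline{\C}$, and Corollary~\ref{closure-basis} bounds its length by $k+2$.
\end{itemize}
This is the paper's proof.

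It also fits your simple/non-simple split. A non-simple basis element of $\C$ automatically lies in $\overline{\C}$: by Theorem~\ref{substitution-thm}, its quotient and components are proper patterns, hence members of $\C$. So the antichain observation is the only missing idea.
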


\begin{proof}
  If $\C$ contains no nonempty word, its basis consists of the empty word
  or the word $1$. Otherwise, let $k$ be the greatest length of a simple
  member of $\C$. By Lemma~\ref{closure-simples} and Proposition~\ref{pwo},
  the substitution closure $\overline{\C}$ is partially well ordered.

  The basis elements of $\C$ that lie in $\overline{\C}$ form an antichain
  there, so there are finitely many. Any remaining basis element is also
  a basis element of $\overline{\C}$: it lies outside $\overline{\C}$,
  while all its proper patterns lie in $\C\subseteq\overline{\C}$.
  Corollary~\ref{closure-basis} bounds its length by $k+2$, leaving only
  finitely many possibilities.
\end{proof}

It remains to find the generating function. As in
Example~\ref{algebraic-class}, we must keep track of avoidance when
inflating a simple quotient. A finite basis makes this possible with
finitely many generating functions by recording which patterns below a
basis element occur in each component. This is analogous to the
query-complete property sets used by Brignall, Huczynska, and
Vatter~\cite{BHV08} for ordinary permutations: the properties satisfied
by an inflation are determined by its quotient and the properties
satisfied by its components. Before giving the construction, we
state the algebraic fact needed to pass from the equations to the result.

\begin{lemma}\label{algebraic-system}
  Let $y_j=P_j(x,y_1,\ldots,y_m)$, $1\leq j\leq m$, be a
  \emph{proper algebraic system} over $\QQ$; that is,
  $P_1,\ldots,P_m\in\QQ[x,y_1,\ldots,y_m]$ satisfy
  \begin{align*}
    P_j(0,\ldots,0)&=0,
      &&1\leq j\leq m,\\
    \frac{\partial P_j}{\partial y_\ell}(0,\ldots,0)&=0,
      &&1\leq j,\ell\leq m.
  \end{align*}
  Then the system
  \[
    y_j=P_j(x,y_1,\ldots,y_m),\qquad\qquad\quad 1\leq j\leq m,
  \]
  has a unique solution
  $(f_1,\ldots,f_m)$ with every $f_j\in x\mskip1mu\QQ[[x]]$, and each $f_j$ is
  algebraic over $\QQ(x)$. Moreover, given
  $H\in\QQ[x,y_1,\ldots,y_m]$, a nonzero polynomial in $\QQ[x,T]$
  annihilating $H(x,f_1,\ldots,f_m)$ can be computed from $H$ and the $P_j$.
\end{lemma}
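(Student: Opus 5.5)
The statement is the standard fact about proper (quadratic-free-in-linear-part) algebraic systems. I would prove it in three stages: existence and uniqueness of the power series solution, algebraicity of each component, and effective elimination for $H(x,f_1,\ldots,f_m)$.

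\emph{Existence and uniqueness.} I would argue coefficient by coefficient, as in the proof of Lemma~\ref{U-lemma}. Write $\mathbf f=(f_1,\ldots,f_m)$ with each $f_j\in x\QQ[[x]]$. Because $P_j(0,\ldots,0)=0$ and all linear terms in the $y_\ell$ vanish, every monomial of $P_j$ other than a pure power of $x$ is either $x^a y^{\mathbf b}$ with $a\geq1$, $|\mathbf b|\geq1$, or has $|\mathbf b|\geq2$. So for each monomial $x^a y^{\mathbf b}$ with $|\mathbf b|\ge1$, its contribution to $[x^n]P_j(x,\mathbf f)$ involves only coefficients $[x^r]f_\ell$ with $r\leq n-1$. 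Indeed, each $f_\ell$ has order at least one, so if $a\geq1$ the remaining degree is at most $n-1$. If $|\mathbf b|\geq2$, every factor has order at least one, so no single factor can carry degree $n$. Hence $[x^n]f_j=[x^n]P_j(x,\mathbf f)$ is a recurrence determining $\mathbf f$ uniquely from $[x^0]f_j=0$. Equivalently, the map $\mathbf y\mapsto \mathbf P(x,\mathbf y)$ is a contraction in the $x$-adic metric on $(x\QQ[[x]])^m$.

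\emph{Algebraicity.} The cleanest route is elimination. Consider the ideal $\mathfrak I=(y_j-P_j)_{j}\subseteq \QQ(x)[y_1,\ldots,y_m]$. Its Jacobian with respect to $\mathbf y$ is $\mathrm{Id}-(\partial P_j/\partial y_\ell)$. At $x=0$ and $\mathbf y=0$ this is the identity, so at the point $(x,\mathbf f)$ the determinant is a power series with constant term $1$ and hence is nonzero. By the Jacobian criterion, the point $\mathbf f$ therefore lies on an irreducible component of the variety $V(\mathfrak I)$ over $\overline{\QQ(x)}$ that has dimension zero. I expect making this rigorous to be the main obstacle: one must see that $\mathbf f$ really is such a point, i.e.\ that a component through a point with invertible Jacobian has dimension $0$. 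Rather than redo this, I would cite the classical theorem that components of proper algebraic systems are algebraic; this appears in Kuich--Salomaa and in Flajolet--Sedgewick, via elimination and resultants.

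If an elementary argument is preferred, I would eliminate variables one at a time. Treat $y_m=P_m$ as an equation $G(x,y_1,\ldots,y_m)=0$ in $y_m$ whose partial derivative at the solution is a unit. Then take resultants $\mathrm{Res}_{y_m}(y_j-P_j,\,y_m-P_m)$ for $j<m$. One must check that these resultants are nonzero, using the unit Jacobian, and that they still vanish at $\mathbf f$. Iterating leaves a nonzero polynomial in $\QQ[x,y_j]$ vanishing at $f_j$. Nonvanishing of the resultants at each stage is the delicate point, and it is where the invertible Jacobian is used.

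\emph{Effective annihilator for $H$.} Adjoin a new variable $T$ and the equation $T=H(x,\mathbf y)$. The field $\QQ(x)(f_1,\ldots,f_m)$ is a finite extension of $\QQ(x)$, since each $f_j$ is algebraic. Hence $H(x,\mathbf f)$ is algebraic over $\QQ(x)$. To compute the annihilating polynomial, use a Gröbner basis of $\mathfrak I+(T-H)$ with an elimination order placing the $y_\ell$ above $T$ and $x$. Its intersection with $\QQ[x,T]$ is nonzero because of the finite-dimensionality of the zero-dimensional component. If that intersection vanishes because of spurious positive-dimensional components, first saturate by the Jacobian determinant, which removes them. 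Every step is a finite computation from $H$ and the $P_j$.
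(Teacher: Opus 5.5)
Your proof is correct in its main line, but it reaches the effective part by a different mechanism than the paper. The paper argues very little by hand. Existence, uniqueness and algebraicity are quoted from Stanley. The computability claim is reduced to Panholzer's algorithm by appending the single equation $z=H\bigl(x,P_1(x,y),\ldots,P_m(x,y)\bigr)-c$ with $c=H(0,\ldots,0)$. Substituting $P_j$ for $y_j$ inside $H$ is the trick that removes the linear terms, so the enlarged system is again proper and its new component is $H(x,\mathbf f)-c$.

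You instead verify existence and uniqueness directly from the coefficient recurrence. You then compute the annihilator by Gröbner elimination from $\mathfrak I+(T-H)$ after saturating by $J=\det\bigl(\mathrm{Id}-(\partial P_j/\partial y_\ell)\bigr)$. This works. At any point where $J\neq0$ the Zariski tangent space of $V(\mathfrak I)$ is zero, so $J$ vanishes on every positive-dimensional component and $\mathfrak I:J^\infty$ is zero-dimensional over $\QQ(x)$. Moreover, $\mathbf f$ is still a zero of $\mathfrak I:J^\infty$, because $J(x,\mathbf f)$ has constant term $1$ and $\QQ[[x]]$ is a domain. Taking $H=y_j$, the same computation proves each $f_j$ algebraic. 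So your route needs no separate citation for algebraicity and never needs the enlarged system to be proper. The paper's route is shorter but leans entirely on two external results; in effect you re-derive an elimination procedure of the kind the paper imports from Panholzer.

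Three points in your write-up need repair.
\begin{enumerate}
\item Your sentence claiming that the unsaturated elimination ideal is nonzero ``because of the finite-dimensionality of the zero-dimensional component'' is false in general. It is the saturation, not the existence of a zero-dimensional component, that guarantees a nonzero result.
\item The Jacobian criterion must be applied over $\QQ((x))$, where $\mathbf f$ actually lives. Placing $\mathbf f$ in $\overline{\QQ(x)}^m$ presupposes the conclusion.
\item Drop the pairwise-resultant alternative. An invertible Jacobian at $\mathbf f$ does not stop $y_j-P_j$ and $y_m-P_m$ from sharing a factor in $y_m$ that is a unit at the origin. For instance, $y_1-P_1=(1+y_1-y_2)(y_1-x)$ and $y_2-P_2=(1+y_1-y_2)(y_2-x)$ define a proper system with solution $f_1=f_2=x$, yet $\mathrm{Res}_{y_2}(y_1-P_1,y_2-P_2)=0$. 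The same system has the line $y_2=1+y_1$ as a component of $V(\mathfrak I)$, so for $H=y_1$ the unsaturated elimination ideal is zero.
\end{enumerate}
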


\begin{proof}
  Existence and uniqueness of the solution with zero constant terms,
  and algebraicity of its components, follow from
  Stanley's~\cite[Proposition~6.6.3 and Theorem~6.6.10]{Sta99} results
  on proper algebraic systems.

  Panholzer~\cite{Pan05} gives an algorithm computing an annihilating
  polynomial for any component of a proper algebraic system. To apply it
  to $H$, write $y=(y_1,\ldots,y_m)$, put $c=H(0,\ldots,0)$, and append
  the equation
  \[
    z=H\bigl(x,P_1(x,y),\ldots,P_m(x,y)\bigr)-c.
  \]
  Its right-hand side vanishes at the origin, as do all its derivatives
  with respect to the unknowns, so the enlarged system is still proper.
  Its new component is $g=H(x,f_1,\ldots,f_m)-c$. Panholzer's algorithm
  therefore computes a nonzero $q(x,T)\in\QQ[x,T]$ with $q(x,g)=0$.
  Then $q(x,T-c)$ is the required polynomial annihilating
  $H(x,f_1,\ldots,f_m)$.
\end{proof}

\begin{theorem}\label{algebraicity}
  Let $\C$ be a class of Cayley permutations with only finitely many simple
  members. Its ordinary generating function
  $G(x)=\sum_{\omega\in\C}x^{|\omega|}$ is algebraic over $\QQ(x)$.
  Given the simple members and the basis of $\C$, one can compute a nonzero
  polynomial $q(x,T)\in\QQ[x,T]$ such that $q(x,G(x))=0$.
\end{theorem}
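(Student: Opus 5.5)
By Theorem~\ref{finite-basis}, $\C=\Av(\mathcal B)$ for a finite basis $\mathcal B$. Let $\mathcal S$ be the finite set of simple members of $\C$. We may assume $\C$ contains a nonempty word; otherwise $G\in\{0,1\}$ and the claim is trivial. Set
\[
  \mathcal P=\{\tau\in\Cay_+ : \tau\leq\beta\text{ for some }\beta\in\mathcal B\}.
\]
This is a finite set, closed downward. For each nonempty $\omega$, record its \emph{profile} $p(\omega)=\{\tau\in\mathcal P:\tau\leq\omega\}$. Membership of $\omega$ in $\C$ is determined by $p(\omega)$: we need $p(\omega)\cap\mathcal B=\emptyset$. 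Note that $\mathcal B\subseteq\mathcal P$.

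The key step is to show that the profile of an inflation is determined by the quotient and the profiles of the components. Lemma~\ref{patterns-through} gives exactly this. We have $\tau\leq\sigma[\alpha_1,\ldots,\alpha_k]$ if and only if $\tau=\st(\sigma|_T)[\gamma_i:i\in T]$ for some nonempty $T$ and nonempty $\gamma_i\leq\alpha_i$, with $\gamma_i=1$ at repeated letters. Each such $\gamma_i$ is a pattern of $\tau$, so it lies in $\mathcal P$, and hence $\gamma_i\leq\alpha_i$ is read off from $p(\alpha_i)$. This yields a computable function
\[
  \Pi_\sigma\colon (2^{\mathcal P})^{\mu(\sigma)}\to 2^{\mathcal P}
\]
with $p(f_\sigma(\gamma_1,\ldots))=\Pi_\sigma(p(\gamma_1),\ldots)$. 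The function is computable because the finitely many decompositions of each $\tau\in\mathcal P$ can be enumerated.

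Next I would set up the system. For each profile $A\subseteq\mathcal P\setminus\mathcal B$, let $y_A$ be the generating function of nonempty $\omega$ with $p(\omega)=A$. For $A$ not in $\{p(\omega)\}$ this series is $0$, which does no harm. Every member of $\C$ of length at least two decomposes uniquely by Theorem~\ref{substitution-thm} over a simple quotient $\sigma\in\mathcal S$. The quotient $12$ needs care. I would introduce auxiliary unknowns $z_A$ for the plus indecomposable words with profile $A$ and $z'_A$ for the minus indecomposable ones, and write $\omega=\alpha\oplus\beta$ with $\alpha$ plus indecomposable. Whether a word is plus indecomposable is determined by its quotient not being $12$, so the $z_A$ satisfy equations of the same kind. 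Then
\[
  y_A=[p(1)=A]\,x+\sum_{\sigma\in\mathcal S,\ |\sigma|\geq2,\ \sigma\neq12,21}\ \sum_{\Pi_\sigma(A_1,\ldots)=A}x^{|\sigma|-\mu(\sigma)}\prod_j y_{A_j}+\sum_{\Pi_{12}(B,C)=A}z_B\,y_C+\sum_{\Pi_{21}(B,C)=A}z'_B\,y_C,
\]
with $z_A=y_A-\sum_{\Pi_{12}(B,C)=A}z_By_C$, or equivalently with $z_A$ expressed directly as $y_A$ minus the $12$-term. In these sums the profiles $A_j,B,C$ range over subsets of $\mathcal P\setminus\mathcal B$. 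Restricting to such profiles is legitimate, since components of a member of $\C$ are patterns of it and therefore lie in $\C$. Similarly, $\sigma\in\C$ is automatic because $\sigma\leq\omega$.

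To make the system proper, I would substitute the $z$-equations into the $y$-equations. Alternatively, I could write the system in the unknowns $z_A, z'_A$ and the $y_A$, defining $z_A$ as the sum of the non-$12$ contributions. This keeps every equation a polynomial. Each right-hand side has zero constant term. The only linear terms in the unknowns would come from a quotient with $\mu(\sigma)=1$ and $|\sigma|=1$, which is excluded, so every monomial has total degree at least two in $x$ and the unknowns together. Hence the system is proper. Lemma~\ref{algebraic-system} then gives a unique solution in $x\QQ[[x]]$, and that solution is the combinatorial one. Taking $H=1+\sum_A y_A$ yields $G$ together with a computable annihilating polynomial.

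The main obstacle is the bookkeeping for the quotients $12$ and $21$, namely ensuring uniqueness through plus/minus indecomposable first components while keeping the system polynomial and proper. The other delicate point is checking that $\Pi_\sigma$ is well defined, that is, that profiles really compose. I would first verify compositionality carefully via Lemma~\ref{patterns-through}, paying attention to the constraint $\gamma_i=1$ at repeated letters.
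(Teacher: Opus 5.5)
Your proposal is correct and follows essentially the paper's proof: profiles relative to the downward closure of the basis, compositionality via Lemma~\ref{patterns-through}, auxiliary series for plus and minus indecomposable first components, and Lemma~\ref{algebraic-system} applied to a proper system. The only real difference is that you count members of $\C$ directly by restricting to profiles disjoint from $\mathcal B$, using heredity to keep quotients and components in $\C$, whereas the paper counts all of $\overline{\C}$ by profile (hence its appeal to Lemma~\ref{closure-simples}) and selects at the end; note also that of your two formulations of the $z$-equations only the second, defining $z_A$ as $[p(1)=A]\,x$ plus the non-$12$ contributions as the paper does, is proper, since $z_A=y_A-\cdots$ contains a term linear in an unknown.
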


\begin{proof}
  If $\C$ contains no nonempty word, then $G(x)$ is $0$ or $1$,
  and the result is immediate. Otherwise, both
  the empty word and $1$ belong to $\C$.

  We construct a proper algebraic system that counts the nonempty members
  of $\overline{\C}$ according to the patterns they contain. We can then
  recover $G$ by selecting the members that avoid the basis of $\C$.
  Let $\mathcal B$ be the basis of $\C$, finite by Theorem~\ref{finite-basis},
  and let $\Gamma$ consist of all patterns contained in a member of
  $\mathcal B$. This is a finite set, closed downward. For a word $\omega$, let
  \[
    \mathcal P(\omega)=\{\gamma\in\Gamma:\gamma\leq \omega\}
  \]
  be the set of patterns in $\Gamma$ contained in $\omega$. This is analogous
  to the set of satisfied properties recorded by Brignall, Huczynska,
  and Vatter~\cite{BHV08}, with the properties here being containment
  of the individual patterns in $\Gamma$.
  Since $\mathcal B\subseteq\Gamma$, a word $\omega\in\overline{\C}$ belongs
  to $\C$ if and only if $\mathcal P(\omega)\cap\mathcal B=\emptyset$.

  The set $\mathcal P(\omega)$ is determined by the quotient of $\omega$ and the
  corresponding sets for its components. To see this, let
  $\omega=\sigma[\alpha_1,\ldots,\alpha_k]$. By Lemma~\ref{patterns-through},
  a nonempty $\beta\in\Gamma$ occurs in $\omega$ if and only if
  \[
    \beta=\st(\sigma|_T)[\gamma_i:i\in T]
  \]
  for nonempty $T\subseteq[k]$ and nonempty $\gamma_i\leq\alpha_i$,
  with $\gamma_i=1$ whenever $\sigma(i)$ is repeated. Each $\gamma_i$ is
  a pattern of $\beta$, so it lies in $\Gamma$. Thus the condition
  $\gamma_i\leq\alpha_i$ can be read from $\mathcal P(\alpha_i)$.
  Enumerating the finitely many choices of $T$ and the $\gamma_i\in\Gamma$
  determines whether each $\beta\in\Gamma$ occurs, and hence determines
  $\mathcal P(\omega)$. The empty
  pattern, if in $\Gamma$, belongs to every $\mathcal P(\omega)$.

  For each subset $Q\subseteq\Gamma$, define the ordinary generating function
  \[
    g_Q(x)=\sum_{\substack{\omega\in\overline{\C}\cap\Cay_+\\
                           \mathcal P(\omega)=Q}}x^{|\omega|}.
  \]
  Define $g_Q^+(x)$ and $g_Q^-(x)$ by restricting this sum to plus
  indecomposable and minus indecomposable words, respectively.
  Since $\Gamma$ is finite,
  this gives a finite list of generating functions.
  We obtain equations for these series from the unique decomposition in
  Theorem~\ref{substitution-thm}. By Lemma~\ref{closure-simples}, the
  available simple quotients are exactly the nonempty simple members of $\C$.
  Fix such a quotient $\sigma$ of length $k\geq2$, and a tuple of
  $k$ property sets $(Q_1,\ldots,Q_k)$. At every position where
  $\sigma(i)$ is repeated, require $Q_i=\mathcal P(1)$ and use the factor $x$.
  At a singleton fibre use $g_{Q_i}$, with two exceptions: the first
  factor is $g_{Q_1}^+$ when $\sigma=12$ and $g_{Q_1}^-$ when
  $\sigma=21$. These are exactly the restrictions that make the
  decomposition unique.

  The product of these $k$ factors counts the inflations with the chosen
  property sets. Compute the resulting set $Q=\mathcal P(\omega)$
  as above, and include this product in the equation for $g_Q$. Summing over all
  quotients of length at least two and all tuples of property sets gives
  the contributions from words of length at least two. The remaining word
  $1$ contributes $x$ to the equation for $g_{\mathcal P(1)}$.
  To obtain the equation for $g_Q^+$, omit the
  quotient $12$; to obtain that for $g_Q^-$, omit $21$. Indeed, these
  quotients characterize plus and minus decomposability, respectively.
  The word $1$ contributes $x$ to all three equations with $Q=\mathcal P(1)$.

  Numbering the series as $y_1,\ldots,y_m$ gives a finite polynomial system
  \[
    y_j=P_j(x,y_1,\ldots,y_m),\qquad 1\leq j\leq m.
  \]
  Each contribution on the right is either $x$ or a product of at least
  two factors drawn from $x$ and the unknowns. Consequently, every $P_j$
  vanishes at the origin and has no term linear in an unknown alone, hence
  the system is proper. Lemma~\ref{algebraic-system} therefore applies,
  and the counting series form its
  unique solution with zero constant terms. The ordinary generating
  function of $\C$ is
  \[
    G(x)
    =1+\sum_{\substack{Q\subseteq\Gamma\\Q\cap\mathcal B=\emptyset}}g_Q(x),
  \]
  so it is algebraic. Finally, $\Gamma$ and all the polynomial equations
  are computable from the basis and simple members of $\C$ by finite
  enumeration. Applying the last assertion of Lemma~\ref{algebraic-system}
  to $1+\sum_{Q\cap\mathcal B=\emptyset}g_Q(x)$ produces the required
  polynomial $q(x,T)$.
\end{proof}

\begin{example}\label{algebraic-system-class}
  Let $\C=\Av(111,212,231,1312)$. Direct enumeration and 
  Corollary~\ref{two-lengths} show that its nonempty simple members are
  \[
    1,\quad 11,\quad 12,\quad 21,\quad 121.
  \]
  By Lemma~\ref{closure-simples}, these are also the nonempty simple members
  of the substitution closure $\overline{\C}$.
  Write $\mathcal B=\{111,212,231,1312\}$ and let
  \[
    \Gamma=\{1,11,12,21,111,112,121,132,212,231,312,1312\}
  \]
  be its downward closure, omitting the empty pattern.
  The simple basis elements $111,212,1312$ do not belong to
  $\overline{\C}$, so every member of $\overline{\C}$ avoids them. Thus
  $\omega\in\overline{\C}$ belongs to $\C$ if and only if
  $231\notin\mathcal P(\omega)$. We sum the equations in the proof of
  Theorem~\ref{algebraicity} over the sets $Q$ disjoint from $\mathcal B$,
  distinguishing whether $12\in Q$. Write
  \begin{align*}
    A&=\sum_{\substack{Q\subseteq\Gamma,\ Q\cap\mathcal B=\emptyset\\
                       12\notin Q}}g_Q,&
    B&=\sum_{\substack{Q\subseteq\Gamma,\ Q\cap\mathcal B=\emptyset\\
                       12\in Q}}g_Q,&
    J&=\sum_{\substack{Q\subseteq\Gamma,\ Q\cap\mathcal B=\emptyset\\
                       12\in Q}}g_Q^+.
  \end{align*}
  Thus $A$ counts the nonempty members avoiding $12$, $B$ counts those
  containing $12$, and $J$ counts the plus indecomposable members counted
  by $B$. Every word counted by $A$ is weakly decreasing and hence plus
  indecomposable. Its minus indecomposable members are the constant words
  $1$ and $11$, counted by $X=x+x^2$.

  We now read the contributions quotient by quotient. The word $1$ and
  the quotient $11$ contribute $X$ to $A$. For the quotient $12$, the
  first component is plus indecomposable, counted by $A+J$.
  Since $231$ is plus indecomposable, an inflation of $12$ avoids $231$
  precisely when both components do. Every such inflation contains $12$,
  giving the contribution $(A+J)(A+B)$ to $B$.

  For the quotient $21$, an occurrence of $231$ crosses the two components
  precisely when the first contains $12$. This component must also be
  minus indecomposable, so it is $1$ or $11$. The resulting inflation
  contains $12$ if and only if the second component does, giving $XA$ to
  $A$ and $XB$ to $B$.

  For the quotient $121$, the two positions in the repeated fibre are
  forced to have component $1$, whereas the middle component $\alpha$ is
  free. An occurrence of $231$ crosses the components of $121[1,\alpha,1]$
  precisely when $\alpha$ contains $12$. Thus $\alpha$ is counted by $A$,
  and these inflations contribute $x^2A$ to $B$.

  Finally, omitting the quotient $12$ gives the equation for $J$. We obtain
  the proper algebraic system
  \begin{align*}
    A&=X+XA,\quad X=x+x^2,\\
    B&=(A+J)(A+B)+XB+x^2A,\\
    J&=XB+x^2A.
  \end{align*}
  Each right-hand side vanishes at the origin and has no term linear in
  an unknown alone, as required in the proof of the theorem.

  To eliminate the auxiliary series, put $G=1+A+B$. The first equation
  gives $A=X/(1-X)$, and the last two give $G-1=(A+J)G$. Since
  \[
  A+J=A+X(G-1-A)+x^2A=XG+x^2A,
  \]
  we obtain
  \[
    G=1+XG^2+\frac{x^2X}{1-X}G.
  \]
  Hence the construction produces the annihilating polynomial
  \[
    q(x,T)=(1-X)(XT^2-T+1)+x^2XT,
    \qquad X=x+x^2.
  \]
  The solution with constant term $1$ begins
  \[
    G(x)=1+x+3x^2+10x^3+36x^4+135x^5+527x^6+2120x^7+\cdots
  \]
  and the counting sequence does not appear in the OEIS~\cite{oeis}.
\end{example}

\section*{Acknowledgements}

We used the large language models GPT (OpenAI) and Claude (Anthropic)
during the preparation of this manuscript to assist with exploring and
checking conjectures through programming, drafting some propositions
and their proofs, proofreading, and editing.

\end{document}